\setlist{leftmargin=*}
\newtheorem{thm}{Theorem}[section] 
\newtheorem{cor}[thm]{Corollary}
\newtheorem{prop}[thm]{Proposition}
\newtheorem{claim}[thm]{Claim} 
\newtheorem{fact}[thm]{Fact}
\theoremstyle{definition} 
\newtheorem{defn}[thm]{Definition}
\theoremstyle{remark} 
\newtheorem{ntn}[thm]{Notation}
\newtheorem{rem}[thm]{Remark}
\newtheorem{sample}[thm]{Example} \numberwithin{equation}{section}
\newcommand{\KK}{\mathbf{K}}
\newcommand{\RR}{\mathbb{R}}
\newcommand{\UU}{\mathbb{U}}
\newcommand{\Th}{\text{Th}}
\newcommand{\tp}{\text{tp}}
\newcommand{\Av}{\text{Av}}
\newcommand{\Aut}{Aut}
\gdef\CF{\mathcal F}
\gdef\CM{\mathcal{M}}
\gdef\UU{\mathbb{U}}
\gdef\QQ{\mathbb{Q}}
\gdef\ZZ{\mathbb{Z}}
\gdef\NN{\mathbb{N}}
\gdef\KK{\mathbb{K}}
\gdef\dd{\operatorname{def}}
\gdef\CP{\mathcal{P}}
\gdef\CM{\mathcal{M}}
\begin{document}

\title{Regularity lemma for distal structures}

\author{Artem Chernikov} \address{Department of Mathematics,
University of California Los Angeles,
Los Angeles, CA 90095-1555, 
USA} \email{chernikov@math.ucla.edu}

\author{Sergei Starchenko} \address{Department of Mathematics, University of Notre Dame, Notre Dame,
  IN 46556, USA} \email{Starchenko.1@nd.edu}

\subjclass[2010]{Primary 03C45, 03C98, 05C35, 05C69, 05D10, 05C25; Secondary 14P10, 03C64}
\keywords{NIP, VC-dimension, distal theories, o-minimality, p-adics, Erd\H os-Hajnal conjecture,
  regularity lemma} 

\date{\today}

\begin{abstract}
  It is known that families of graphs with a semialgebraic edge relation of bounded complexity
  satisfy much stronger regularity properties than arbitrary graphs, and that they can be decomposed
  into very homogeneous semialgebraic pieces up to a small error (e.g., see \cite{PachSolymosi,
    AlonEtAl, GromovEtAl, fox2015polynomial}). We show that similar results can be obtained for
  families of graphs with the edge relation uniformly definable in a structure satisfying a certain
  model theoretic property called distality, with respect to a large class of generically stable
  measures. Moreover, distality characterizes these strong regularity properties. This applies in
  particular to graphs definable in arbitrary o-minimal structures and in $p$-adics.
\end{abstract}

\maketitle

\section{Introduction}

In this paper by a graph we always mean \emph{an undirected} graph, i.e.\ a graph $G=(V,E)$ consists
of a set of vertices $V$ together with \emph{a symmetric} set of edges $E \subseteq V\times V$.

As usual we say that a subset $V_0\subseteq V$ is \emph{homogeneous} if either $(v,v')\in E$ for all
$v\neq v'\in V_0$ or $(v,v')\notin E$ for all $v\neq v'\in V_0$, i.e.\ the induced graph on $V_0$ is
either complete or empty (we ignore the diagonal).

A classical theorem of Erd\H{o}s-Szekeres~\cite{erdos1935combinatorial} states that every graph on
$n$ vertices contains a homogeneous subset of size at least $\frac{1}{2}\log{n}$ (all $\log$'s are
of base two), and this bound is tight up to a constant multiple.

Since the families of graphs with a forbidden induced subgraph have much stronger structural
properties than arbitrary graphs, they have much bigger homogeneous subsets.
\begin{thm}[Erd\H{o}s-Hajnal, \cite{erdos1989ramsey}] For any finite graph $H$ there is a constant
  $c=c(H)>0$ such that every $H$-free finite graph on $n$ vertices contains a homogeneous subset of
  size at least $\displaystyle e^{c\sqrt{\log n}}$.
\end{thm}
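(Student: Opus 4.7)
The plan is to proceed by induction on $|V(H)|$, combining the Erd\H{o}s--Szekeres bound with a bipartite refinement enabled by $H$-freeness. The base case $|V(H)|\leq 2$ is immediate since then $H$-freeness directly forces $G$ to be either complete or empty, and $V(G)$ itself is homogeneous.

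For the inductive step I would first establish a \emph{bipartite} analogue: there exists $\epsilon=\epsilon(H)>0$ such that every $H$-free graph on $n$ vertices contains two disjoint vertex subsets $A,B$ with $|A|,|B|\geq n^{\epsilon}$ and with either all possible edges or no edges between $A$ and $B$. To prove this, pick a vertex $v\in V(H)$ and write $V(H)\setminus\{v\}=N\sqcup\bar N$ where $N$ is the neighborhood of $v$ in $H$. Given $H$-free $G$, for each vertex $u\in V(G)$ the induced subgraph on $N_G(u)$ must be $H[N]$-free or the induced subgraph on $V(G)\setminus(N_G(u)\cup\{u\})$ must be $H[\bar N]$-free; otherwise gluing an induced copy of $H[N]$ on one side and an induced copy of $H[\bar N]$ on the other, both joined to $u$ appropriately, would reconstitute $H$. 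A pigeonholing/averaging argument over $u$, combined with the inductive hypothesis applied to the smaller patterns $H[N]$ and $H[\bar N]$, produces either a large homogeneous set outright or the desired bipartite-homogeneous pair.

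With the bipartite analogue in hand, iterate it. Set $V_0=V(G)$; at step $i$ apply the lemma inside $V_{i-1}$ to obtain $A_i,B_i\subseteq V_{i-1}$ of size at least $|V_{i-1}|^{\epsilon}$ with a fixed type $\tau_i\in\{\text{all edges},\text{no edges}\}$ between them, and set $V_i:=A_i$. Choosing a representative $b_i\in B_i$ at each step produces a sequence whose pairwise adjacencies are controlled by the $\tau_i$'s, since $b_j\in A_i$ whenever $j>i$. Pigeonholing the $\tau_i$'s into their two possible values, combined with invoking the ordinary Erd\H{o}s--Szekeres bound inside a surviving block of constant type and optimizing the number of rounds one can sustain before $|V_i|$ drops below the threshold needed for the lemma, yields the target bound $e^{c\sqrt{\log n}}$.

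The main obstacle is the inductive proof of the bipartite analogue: one needs to extract genuine structural information from the mere absence of $H$ in $G$ in a way that composes well as $|V(H)|$ grows and does not cause the exponent $\epsilon(H)$ to collapse after each inductive step. The subexponential form $e^{c\sqrt{\log n}}$, rather than the conjectural polynomial $n^{c(H)}$ of the Erd\H{o}s--Hajnal conjecture, reflects precisely the unavoidable loss incurred when oscillating between the one-sided homogeneous and the bipartite-homogeneous statements in this recursive scheme.
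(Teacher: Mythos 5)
There is a genuine gap, and it sits exactly where you locate the ``main obstacle.'' Your proposed dichotomy --- that in an $H$-free graph every vertex $u$ has either an $H[N]$-free neighborhood or an $H[\bar N]$-free non-neighborhood, ``otherwise gluing \dots would reconstitute $H$'' --- is false. Gluing a copy of $H[N]$ inside $N_G(u)$ to a copy of $H[\bar N]$ outside, through $u$, controls only the edges at $u$; the edges \emph{between} the two copies are arbitrary, whereas an induced copy of $H$ requires them to match the $N$--$\bar N$ edges of $H$. Concretely, take $H=P_4$ with $v$ a middle vertex, so $H[N]=\bar K_2$ and $H[\bar N]=K_1$, and take $G=C_4$: $G$ is $P_4$-free, yet for every vertex $u$ the neighborhood induces a non-edge and the non-neighborhood is non-empty, so both alternatives of your dichotomy fail. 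This cross-edge matching problem is precisely the central difficulty of the subject; if such a local dichotomy held, a short induction would essentially give polynomial bounds, i.e.\ the full Erd\H{o}s--Hajnal conjecture, which is a strong sign the step cannot be repaired as stated. (The bipartite statement you want as a lemma is true --- it is the Erd\H{o}s--Hajnal--Pach theorem quoted in the paper --- but its proof is substantially more involved than this gluing.)

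The second half of the scheme also does not deliver the claimed bound even if the bipartite lemma is granted. Each round shrinks the surviving set from $m$ to $m^{\epsilon}$ while contributing a single representative $b_i$, so only $O(\log\log n)$ rounds are possible before the set becomes constant-sized; pigeonholing the types then yields a homogeneous set of size $O(\log\log n)$, and stopping early to apply Erd\H{o}s--Szekeres inside $V_t$ caps the total at $O(\log n)$ --- below the trivial bound's order, let alone $e^{c\sqrt{\log n}}$. The known argument of this flavor needs \emph{linear}-size bi-homogeneous pairs (the strong Erd\H{o}s--Hajnal property, as in Remark 1.4 of the paper) to reach polynomial homogeneous sets; polynomial-size pairs are too lossy for this iteration. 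The actual Erd\H{o}s--Hajnal proof does not pass through a bipartite intermediate at all: it runs the induction on $|V(H)|$ directly for the homogeneous-set statement, with a careful partition/density argument that produces the recursion responsible for the $e^{c\sqrt{\log n}}$ shape. (Note also that the paper itself only cites this theorem; it contains no proof to compare against, so the assessment above is of your argument on its own terms.)
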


However the following conjecture is widely open (see e.g. \cite{alon2001ramsey,
  chudnovsky2014erdos}).

\medskip
\noindent{\bf Erd\H{o}s-Hajnal Conjecture.}
For every finite graph $H$ there is a constant $\delta=\delta(H)>0$ such that every $H$-free graph
on $n$ vertices has a homogeneous subset of size at least $n^\delta$.  \medskip

In the bi-partite case one has better bounds.  Let $G=(V,E)$ be a graph.  We say that a pair of
subsets $V_1,V_2\subseteq V$ is \emph{homogeneous} if either $V_1\times V_2 \subseteq E$ or
$(V_1\times V_2) \cap E=\emptyset$.

\begin{thm}[Erd\H{o}s, Hajnal and Pach \cite{erdos2000ramsey}] For any finite graph $H$ there is a
  constant $\delta=\delta(H)>0$ such that every $H$-free graph on $n$ vertices has a homogeneous
  pair $V_1,V_2$ with $|V_1|,|V_2| \geq n^\delta$.
\end{thm}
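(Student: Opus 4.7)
The plan is to proceed by induction on the number of vertices of $H$. The base case $|H| \leq 2$ is trivial with $\delta = 1$. For the inductive step, I would fix a vertex $u \in H$ and set $H_0 = H \setminus \{u\}$; by induction we have a constant $\delta_0 = \delta(H_0) > 0$ for which the theorem holds.

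Given an $H$-free graph $G = (V,E)$ on $n$ vertices, the strategy is to find large sets $A, B \subseteq V$ with $|A|, |B| \geq n^{\alpha}$ for some $\alpha = \alpha(H) > 0$, such that (i) the induced subgraph $G[A]$ is $H_0$-free, and (ii) every vertex of $B$ relates uniformly to every vertex of $A$ in the same way that $u$ relates to the vertices of $H_0$ inside $H$. Assuming this, the inductive hypothesis applied to $G[A]$ yields a homogeneous pair $(V_1, V_2)$ inside $A$ of size $\geq |A|^{\delta_0} \geq n^{\alpha \delta_0}$. Combining the homogeneity between $B$ and $A$ with a further pigeonhole pass inside $B$ then upgrades this to a full bipartite homogeneous pair of comparable size.

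To construct $A$ and $B$, I would use a dependent random choice argument. Passing to the complement of $G$ if needed, we may assume without loss of generality that $u$ is non-adjacent to every vertex of $H_0$. Choose a random sample $S \subseteq V$ of an appropriate constant size $t = t(|H|)$, and let $A$ be the set of common non-neighbors of $S$. An averaging argument (tracking the expected size of $A$ against the number of ``bad'' subsets of size $|H_0|$ having small common non-neighborhood) shows that with positive probability $|A| \geq n^\alpha$ while $G[A]$ is $H_0$-free: any induced copy of $H_0$ in $A$ together with any single vertex of $S$ (non-adjacent to every vertex of $A$) would produce an induced copy of $H$, contradicting $H$-freeness. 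We then take $B = S$ or a suitable subset.

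The main obstacle is twofold. First, $u$ may have both neighbors and non-neighbors in $H_0$, which forces a more intricate two-coloring of $S$ into ``neighbor-role'' and ``non-neighbor-role'' vertices, rather than the clean dichotomy sketched above; handling this requires partitioning $A$ further according to the neighborhood pattern induced by $S$ and applying pigeonhole. Second, the quantitative bookkeeping is delicate: the exponent $\alpha$ at each inductive step depends on the local edge density, and preventing $\delta$ from collapsing to zero after $|H|$ iterations requires tight control on density decay. The crucial reason we obtain $n^\delta$ (rather than only $e^{c\sqrt{\log n}}$ as in Erd\H{o}s--Hajnal) is that each inductive step costs only a polynomial factor in set size, because in the bipartite setting the two sides may be chosen independently, so there is no need to iterate a logarithmic-depth Ramsey recursion within a single vertex set.
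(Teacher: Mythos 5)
The paper does not actually prove this statement; it is quoted as background from Erd\H{o}s--Hajnal--Pach \cite{erdos2000ramsey}, so your sketch has to be judged against the known argument, and there is a genuine gap at its core. The decisive problem is the step ``passing to the complement of $G$ if needed, we may assume $u$ is non-adjacent to every vertex of $H_0$.'' Complementation replaces $H$ by its complement and hence only exchanges ``complete to'' with ``anticomplete to''; it does not remove mixed attachments. For many graphs $H$ (e.g.\ $H=P_4$ or $C_5$) \emph{every} vertex $u$ has both a neighbor and a non-neighbor in $H_0=H\setminus\{u\}$, so the case your dependent-random-choice construction handles is vacuous for such $H$. Moreover, in the mixed case the construction genuinely fails rather than merely becoming ``more intricate'': if $A$ is defined by a \emph{uniform} adjacency pattern to the sample $S$ (common non-neighbors, or, after your proposed two-coloring, adjacent to the ``neighbor-role'' vertices and non-adjacent to the rest), then each $s\in S$ is complete or anticomplete to all of $A$. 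An induced copy of $H_0$ inside $A$ together with $s$ therefore forms $H_0$ plus a vertex attached uniformly to it, which is not an induced copy of $H$, so $H$-freeness of $G$ yields no contradiction and nothing forces $G[A]$ to be $H_0$-free. To make your inductive step work you would need a vertex of $G$ whose attachment to the copy realizes $u$'s mixed pattern, and the sampling supplies no such vertex; this mixed case is precisely where the actual Erd\H{o}s--Hajnal--Pach induction does its work (their step splits $H-u$ along the neighbors and non-neighbors of $u$ and combines a density dichotomy with an iterated extraction, not a single common-(non)neighborhood pass).

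Two smaller points. First, the set $B$ is both superfluous and too small: once $G[A]$ is $H_0$-free with $|A|\ge n^{\alpha}$, the inductive hypothesis applied inside $A$ already produces the required homogeneous pair, while $B=S$ has constant size $t(|H|)$, so the proposed ``upgrade'' via $B$ is neither needed nor capable of giving two polynomial-size sides. Second, the quantitative bookkeeping you worry about is real but secondary: in the uniform-attachment case the loss per inductive step is a fixed polynomial factor and the exponent survives $|H|$ iterations; the step that must actually be supplied is the mixed-attachment case above.
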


The following definition is taken from \cite{FoxPach}.
\begin{defn} Let $\mathcal{G}$ be a class of finite graphs
  \begin{enumerate}
  \item $\mathcal{G}$ has the \emph{Erd\H{o}s-Hajnal Property} if there is $\delta>0$ such that
    every $G=(V,G)\in \mathcal{G}$ has a homogeneous subset $V_0$ of size $|V_0| \geq |V|^\delta$
  \item $\mathcal{G}$ has the \emph{strong Erd\H{o}s-Hajnal Property} if there is $\delta>0$ such
    that every $G\in \mathcal{G}$ has a homogeneous pair $V_1,V_2$ with $|V_1|,|V_2|\geq \delta|V|$.
  \end{enumerate}
\end{defn}

\begin{rem}\label{rem: strong EH implies EH} Is is shown in \cite{AlonEtAl} that if a family of
  finite graphs $\mathcal{G}$ has the strong Erd\H{o}s-Hajnal property and is closed under taking
  induced subgraphs then it has the Erd\H{o}s-Hajnal property.
\end{rem}

In this paper we consider families of graphs whose edge relations are given by a fixed definable
relation in a first-order structure.

\begin{defn}Let $\CM$ be a first-order structure and $R\subseteq M^k\times M^k$ be a definable
  relation. Consider the family $\mathcal{G}_R$ of all finite graphs $V=(G,E)$ where
  $G\subseteq M^k$ is a finite subset and $E=(V\times V)\cap R$. We say that $R$ satisfies the
  \emph{(strong) Erd\H{o}s-Hajnal property} if the family $\mathcal{G}_R$ does.
\end{defn}
 
We extend this notion to the bi-partite case.
\begin{defn} Let $\CM$ be a first-order structure and $R\subseteq M^m\times M^n$ a definable
  relation.
  \begin{enumerate}
  \item A pair of subsets $A\subseteq M^m, B\subseteq M^n$ is called \emph{$R$-homogeneous} if
    either $A\times B \subseteq R$ or $(A\times B) \cap R =\emptyset$.

  \item We say that the relation $R$ satisfies the \emph{strong Erd\H{o}s-Hajnal property} if there
    is a constant $\delta=\delta(R)>0$ such that for any finite subsets $A\subseteq M^m$,
    $B\subseteq M^n$ there are $A_0\subseteq A$, $B_0\subseteq B$ with $|A_0|\geq \delta|A|$,
    $|B_0|\geq \delta |B|$, and the pair $A_0,B_0$ is $R$-homogeneous.
  \end{enumerate}
\end{defn}

Our motivation for this work comes from the following remarkable theorem by Alon et al.

\begin{thm}[{\cite[Theorem 1.1]{AlonEtAl}}]\label{thm:semialg-Ramsey} If
  $R \subseteq \RR^n\times \RR^m$ is a semialgebraic relation then $R$ has the strong
  Erd\H{o}s-Hajnal property.
\end{thm}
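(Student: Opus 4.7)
The plan is to exploit two quintessentially semialgebraic phenomena: uniformly bounded VC-dimension, and the availability of cell decompositions that interact well with a bounded-complexity definable family. Given finite $A \subseteq \RR^n$ and $B \subseteq \RR^m$, I would view $R$ as a uniformly definable family of semialgebraic subsets $R_b := \{a \in \RR^n : (a,b) \in R\}$ of $\RR^n$ indexed by $b \in B$. The description complexity (number and degree of the defining polynomial (in)equalities) of each $R_b$ is bounded independently of $b$, so standard real-algebraic sign-pattern bounds ensure that the family $\{R_b : b \in B\}$ has VC-dimension at most some $d = d(R)$.

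Next I would invoke a cutting lemma for arrangements of semialgebraic hypersurfaces of bounded degree: for any $r \geq 2$ there is a decomposition of $\RR^n$ into $N = O(r^n)$ semialgebraic cells such that, for each cell $C$, the boundary $\partial R_b$ meets $C$ for at most $|B|/r$ values of $b \in B$. Such cuttings are produced either by vertical cell decomposition of arrangements combined with Clarkson--Shor random sampling, or via polynomial partitioning applied to the polynomials defining the family. Crucially, on the interior of each cell $C$, for each of the remaining at least $(1-1/r)|B|$ \emph{good} parameters $b$, the set $R_b$ either contains $C$ entirely or is disjoint from $C$.

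A double pigeonhole then closes the argument. Fix $r$ to be a constant depending only on $R$. By pigeonhole on cells, some cell $C$ contains at least $|A|/N$ points of $A$; let $A_0 := A \cap C$. The good parameters relative to $C$ split into two classes according to whether $R_b \supseteq C$ or $R_b \cap C = \emptyset$, and one of these classes has size at least $\tfrac{1}{2}(1-1/r)|B|$. Taking $B_0$ to be that class yields an $R$-homogeneous pair with $|A_0| \geq \delta|A|$ and $|B_0| \geq \delta|B|$ for some $\delta = \delta(R) > 0$.

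The main obstacle is establishing the cutting lemma with the right quantitative parameters: $O(r^n)$ cells, together with a $|B|/r$ bound per cell on the number of surface crossings. In the linear / hyperplane case this is classical Chazelle-type theory, but for general semialgebraic families it requires controlling the combinatorial complexity of arrangements of bounded-degree algebraic surfaces, or else a careful application of polynomial partitioning. Once the cutting is available at this strength the rest of the argument is routine pigeonhole; it is precisely the absence of such a geometric partition outside of the semialgebraic setting that presumably motivates replacing it with the model-theoretic notion of distality, together with suitable generically stable measures, in the main results of the paper.
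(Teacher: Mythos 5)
Your argument is correct in outline, but it is worth being clear about how it sits relative to the paper: the paper never proves this theorem directly — it quotes it from Alon et al.\ and instead derives it as a special case of the general distal result (Theorem~\ref{thm:delta-main}, or Corollary~\ref{BasicRamseyForCountingMeasures} with counting measures concentrated on the finite sets $A$ and $B$). What you have written is essentially a reconstruction of the original Alon--Pach--Pinchasi--Radoi\v{c}i\'c--Sharir proof: a $1/r$-cutting of $\RR^n$ adapted to the family $\{R_b : b \in B\}$, followed by the double pigeonhole (a cell rich in $A$-points, then the majority class among the non-crossing parameters). The paper's proof of the generalization has exactly this skeleton, but replaces your geometric input by model-theoretic surrogates: the cells are the ``chambers'' coming from strong honest definitions (Fact~\ref{StrongHonestDef}), and the cutting (Claim~\ref{CrossCuttingExists}) is obtained from the VC-theorem applied to a generically stable measure rather than from Clarkson--Shor sampling or polynomial partitioning; this is what buys the extension from counting measures on $\RR^n$ to arbitrary generically stable Keisler measures in any distal structure, as well as uniform definability of the homogeneous pair, which your argument does not provide (though the statement as quoted does not require it).

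Two caveats on your geometric step. First, the precise bound $O(r^n)$ on the number of cells of a $1/r$-cutting for arrangements of bounded-degree algebraic surfaces in higher dimensions is delicate (vertical decompositions give worse exponents); but since you fix $r$ to be a constant depending only on $R$, any bound of the form $N = N(r,R)$ suffices, so this does not affect correctness, only the explicit value of $\delta$. Second, since $R_b$ is an arbitrary semialgebraic set (neither open nor closed in general), ``$\partial R_b$ does not meet $C$'' yields homogeneity of $R_b$ on $C$ only when $C$ is connected; it is cleaner to use the combinatorial crossing notion (both $C \cap R_b$ and $C \setminus R_b$ nonempty), which is exactly the notion the paper's chambers are built around, and which the standard cutting constructions also control.
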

 
\begin{rem}
  \begin{enumerate}[(i)]
  \item Although it is not stated explicitly in \cite{AlonEtAl}, but can be easily derived from the
    proof, homogeneous pairs in the above theorem can be chosen to be relatively uniformly
    definable.
  \item The above theorem was generalized by Basu (see \cite{Basu2}) to (topologically closed)
    relations definable in arbitrary o-minimal expansions of real closed fields.
  \end{enumerate}
  
\end{rem}

Besides the Erd\H{o}s-Hajnal property for semialgebraic graphs, the above theorem has many other
applications including unit distance problems \cite{pach2006diameter}, improved bounds in higher
dimensional semialgebraic Ramsey theorem \cite{conlon2014ramsey}, \cite[Theorem 1.2]{AlonEtAl},
algorithmic property testing \cite{fox2015polynomial}, and can also be used to obtain a strong
Szemer\'edi-type regularity lemma for semialgebraic graphs \cite{GromovEtAl, fox2015polynomial} (see
also Section~\ref{sec: distal regularity}).

The aim of this article is to demonstrate that the above result by Alon et al.,  along with its key implications,   holds at a much
larger level of generality, namely for families of graphs whose edge relation is definable in a
structure satisfying a certain model theoretic property called \emph{distality} (see Section
\ref{sec: Distality}) and with respect to the class of the so-called \emph{generically stable}
measures (as opposed to just the counting ones, see Section~\ref{sec: gen stab meas}). In
particular, this applies to graphs definable in arbitrary o-minimal structures and in $p$-adics with
analytic expansions, with respect to the Lebesgue (respectively, Haar) measure on a compact interval
(respectively, compact ball).

The following is one of the key results of our paper (see Theorem~\ref{thm:delta-main}).
\begin{thm}\label{thm:intro-main} Let $\CM$ be a distal structure, and $R\subseteq M^n\times M^m$ a
  definable relation.  Then there is a constant $\delta=\delta(R)>0$ such that for any generically
  stable measures $\mu_1,\mu_2$ on $M^n$ and $M^m$ respectively there are definable sets
  $A\subseteq M^n$, $B\subseteq M^m$ with $\mu_1(A)\geq \delta$, $\mu_2(B)\geq \delta$, and the pair
  $A,B$ is $R$-homogeneous.
\end{thm}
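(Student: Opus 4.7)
The plan is to combine a distal cell decomposition for the formula $\phi(x,y)$ defining $R$ with a finite-sample approximation of $\mu_2$ coming from generic stability. The theory of distality (to be developed earlier in the paper) should provide a \emph{distal cell decomposition of polynomial complexity} for $\phi$: a constant $d = d(\phi)$ such that for every finite $F \subseteq M^m$, the space $M^n$ can be partitioned into at most $c\,|F|^d$ definable cells, each $\phi$-homogeneous over $F$, meaning that for each cell $D$ and each $b \in F$ either $D \subseteq \phi(M,b)$ or $D \cap \phi(M,b) = \emptyset$. This is the distal analogue of the polynomial semialgebraic cell decomposition used by \cite{AlonEtAl}.

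Distality implies NIP, so the family $\{\phi(a, M) : a \in M^n\}$ has finite VC-dimension, and the generically stable measure $\mu_2$ admits arbitrarily fine finite-sample approximations: for every $\epsilon > 0$ there is a finite multiset $F \subseteq M^m$ with $\sup_{a \in M^n} \bigl|\mu_2(\phi(a, M)) - |F \cap \phi(a, M)|/|F|\bigr| < \epsilon$. The combinatorial core of the argument is then to apply the distal cell decomposition to such an $F$, obtain cells $D_1, \ldots, D_N$ of $M^n$ with signatures $F_i^+ = \{b \in F : D_i \subseteq \phi(M,b)\}$, select a cell with favorable mass, and use the VC approximation to replace the finite signature $F_i^+$ by the definable set $\phi(a_0, M)$ (or its complement) for any $a_0 \in D_i$, thereby producing a definable $R$-homogeneous pair $(A, B)$.

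The main obstacle is obtaining a cell $D_i$ with both $\mu_1(D_i)$ and $\min\bigl(|F_i^+|, |F| - |F_i^+|\bigr)/|F|$ bounded below by a universal constant depending only on $\phi$: direct pigeonhole yields only $\mu_1(D_i) \geq 1/N = \Omega(|F|^{-d})$, which degrades as $\epsilon \to 0$. I expect this to be handled via a density-increment iteration: if no $R$-homogeneous pair of relative measure $\geq \delta$ exists inside the current rectangle, the cell decomposition forces passage to a sub-rectangle on which the edge density is significantly farther from $1/2$, and iterating $O(\log(1/\delta))$ times drives the density close to $0$ or $1$, at which point one final application of the cell decomposition, together with a small cleaning step that removes the $O(\epsilon)$-error inherent in the VC approximation, extracts a genuinely $R$-homogeneous pair of the desired measure. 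Alternatively, a $(p,q)$-type theorem for families with polynomial dual shatter function (a property distal families possess) could directly produce constant-measure transversals and bypass the density increment.
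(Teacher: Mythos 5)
You have correctly assembled the two ingredients the paper itself uses: a distal cell decomposition of polynomial size over a finite parameter set (the ``chambers'' coming from Fact~\ref{StrongHonestDef}) and the uniform finite-sample approximation of a generically stable measure (Proposition~\ref{UniformEpsilonNetGenStable}). But the step you flag as the ``main obstacle'' is where the actual idea of the proof lives, and the resolutions you propose are not the right ones. The point is that the approximation parameter is \emph{never} sent to $0$: one fixes $r$ to be an absolute constant depending only on the target $\beta\in(0,\tfrac12)$, and uses the finite sample $S\subseteq M^m$, of constant size $O(r^2\log r)$, not as a fine approximation of $\mu_2$ but as an $\varepsilon$-net (at scale $1/r$) for the \emph{definable family of crossing sets} $C^{\#}(M)=\{b: \phi(x,b)\ \text{crosses}\ C\}$. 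Then every chamber that is complete over $S$ has $\mu_2(C^{\#}(M))\le 1/r$ (the cutting lemma, Claim~\ref{CrossCuttingExists}), the number of $S$-complete chambers is a constant (polynomial in $|S|$, hence in $r$), and pigeonhole on the $\mu_1$-side already gives a chamber $C_0$ with $\mu_1(C_0)$ bounded below by a constant depending only on $R$ and $\beta$. Moreover, the homogeneous partner of $C_0$ is not ``$\phi(a_0,M)$ plus a cleaning step'': it is one of the two definable pieces $D_1=\{b: C_0\subseteq\phi(M,b)\}$ or $D_2=\{b: C_0\cap\phi(M,b)=\emptyset\}$ of the non-crossing set $D=M^m\setminus C_0^{\#}(M)$, which is \emph{exactly} homogeneous against $C_0$ and has $\mu_2$-measure at least $\tfrac12(1-\tfrac1r)$. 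No iteration is needed, and the asymmetric statement one gets this way (Theorem~\ref{BasicPartitionGraphs}) even allows $\mu_1$ to be an arbitrary Keisler measure.

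As written, your density-increment plan is both undeveloped and at risk of circularity: the increment step needs to produce sub-rectangles of \emph{constant} relative measure with a density gain, which is essentially the statement being proved; in the paper the density increment appears only later (Claim~\ref{claim:ind}), and there it invokes Theorem~\ref{thm:delta-main} as a black box rather than proving it. Likewise the ``small cleaning step that removes the $O(\epsilon)$-error'' is precisely where you would need a bound on the $\mu_2$-measure of the crossing set of the chosen cell at a fixed constant scale --- and once you have that bound, the theorem follows directly with no increment and no cleaning. So the write-up has a genuine gap at its quantitative core, even though the supporting machinery you cite is the same as the paper's.
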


\begin{rem}
  It is not hard to see that our Theorem~\ref{thm:intro-main} implies 
  Theorem~\ref{thm:semialg-Ramsey}, by taking $\CM$ to be the ordered field of real numbers, and considering
  measures concentrated on finite sets.  Thus distal structures provide a natural framework for a
  model theoretic approach to Ramsey-type results in geometric combinatorics.
\end{rem}

\begin{rem} It is demonstrated by Malliaris-Shelah in \cite{malliaris2014regularity} (see also
  \cite{EHStable} for an alternative proof) that if $\CM$ is a stable structure and
  $R\subseteq M^k\times M^k$ is a definable relation then the family of finite graphs
  $\mathcal{G}_R$ has the Erd\H{o}s-Hajnal property. However in general this family does not have
  the \emph{strong} Erd\H{o}s-Hajnal property (see Section~\ref{sec: Failure in ACFp}).
 
\end{rem}

\begin{rem}
Our proof of Theorem \ref{thm:intro-main} (and the density version in Corollary \ref{cor: density extra parameters}) gives explicit bounds on $\delta$ and the number of the parameters in the definitions of $A$ and $B$ in terms of the VC-density of the edge relation $R$. In particular, for $o$-minimal structures and for $p$-adics, we obtain a bound in terms of the number of variables involved in $R$, due to the corresponding bounds for VC-density from \cite{VCD1}. We were informed by Pierre Simon that after reading our paper he had found another proof of this result which is faster, but does not give bounds.
\end{rem}

A brief summary of the paper. In Section~\ref{sec: prelims} we introduce the context and the
notation: first-order structures and definable sets, distality, Keisler measures and generic
stability.  In Section~\ref{sec: basicRamsey} we prove a definable generalization of Theorem
\ref{thm:semialg-Ramsey} for bi-partite graphs. In Section~\ref{sec: density distal Ramsey} we
improve it to a density version, using which we obtain an analogue for hypergraphs and a version
allowing additional parameters in the definition of the edge relation.  This gives in particular a
lot of new families of graphs satisfying the strong Erd\H{o}s-Hajnal property (see Example~\ref{ex:
  non-distal structures satisfying *}). In Section~\ref{sec: distal regularity} we obtain a strong
regularity lemma for hypergraphs definable in distal structures, generalizing the result for
semialgebraic hypergraphs from \cite{GromovEtAl, fox2015polynomial}.

In Section~\ref{sec: equiv to distality} we consider the converse to our results from the previous
sections.  First, in Section~\ref{sec: Failure in ACFp} we demonstrate a very explicit failure of
the definable counterpart of Theorem~\ref{thm:semialg-Ramsey} in the theory of algebraically closed
fields of positive characteristic, even without requiring definability of the homogeneous
subsets. It follows in particular that every field interpretable in a distal structure is of
characteristic $0$. In Section~\ref{sec: equivalence to distality} we prove that distality of a
structure is in fact equivalent to the definable counterpart of Theorem~\ref{thm:semialg-Ramsey}.

Some further questions concerning incidence phenomena and higher-dimensional Ramsey theory in our
setting will be addressed in a future paper.

\subsection*{Acknowledgements}
We thank Pierre Simon for pointing out a mistake in an earlier version of our results. We thank Dave Marker, Kobi Peterzil, Caroline Terry, Roland Walker and the anonymous referee for their numerous comments that have helped to improve the presentation.

The work presented in this paper began during the program on Model Theory, Arithmetic Geometry and
Number Theory at MSRI, Berkeley, Spring 2014. We thank MSRI for its hospitality. The first author
was partially supported by ValCoMo (ANR-13-BS01-0006), by the Fondation Sciences Mathematiques de
Paris (FSMP) and by a public grant overseen by the French National Research Agency (ANR) as part of
the Investissements d'Avenir program (reference: ANR-10-LABX-0098). The second author was partially
supported by NSF.

\section{Preliminaries}\label{sec: prelims}

\subsection{Model theoretic notation}
\label{sec:model-theor-triv}

We assume familiarity only with the very basic notions of model theory such as first-order
structures and formulas that can be found in any introductory model theory book
(e.g. \cite{marker2002model}). By a structure we always mean a first-order structure.

Our notations are standard.  We will denote first-order structures by script letters
$\mathcal{M}, \mathcal{N}$, etc., and use letter $M, N$ etc. to denote their underlying sets.  Very
often we will not distinguish singletons and tuples: e.g.\  we may use $x$ to denote a tuple of
variables $(x_1,\dotsc,x_n)$, use $a$ to denote an element of $M^n$, and then we use $|x|$ to denote
the length of the tuple $x=(x_1,\dotsc,x_n)$.

If $\CM$ is a structure and $\phi(x,y)$ is a formula in the language of $\CM$, then for
$a\in M^{|y|}$, as usual, by $\phi(M,a)$ we will denote the subset of $M^{|x|}$ defined by
$\phi(x,a)$, namely $\phi(M,a)=\{ b\in M^{|x|}\colon \CM\models \phi(b,a)\}$.

A subset $X\subseteq M^n$ is called \emph{definable} if there is a formula $\phi(x,y)$ and
$a\in M^{|y|}$ such that $X=\phi(M,a)$; if we want to specify the set of parameters, then for
$A\subseteq M$ a definable subset $X\subseteq M^n$ is called $A$-definable (or definable over $A$)
if we can choose $a$ as above in $A^{|y|}$.  Also if we want to specify $\phi$ we say that such a
set $X$ is $\phi$-definable.

\subsection{VC-dimension and NIP}\label{sec:vc-dimension-nip}

Vapnik--Chervonenkis dimension, or VC-dimension, is an important notion in combinatorics and
statistical learning theory.  Let $X$ be a set, finite or infinite, and let $\mathcal{F}$ be a
family of subsets of $X$.  Given $A \subseteq X$, we say that it is \emph{shattered} by
$\mathcal{F}$ if for every $A' \subseteq A$ there is some $S \in \mathcal{F}$ such that
$A \cap S = A'$. A family $\mathcal{F}$ is said be a \emph{VC-class} if there is some $n < \omega$
such that no subset of $X$ of size $n$ is shattered by $\mathcal{F}$. In this case \emph{the
  VC-dimension of $\mathcal{F}$}, that we will denote by $VC(\mathcal{F})$, is the smallest integer
$n$ such that no subset of $X$ of size $n+1$ is shattered by $\CF$.  For a set $B\subseteq X$, let
$\mathcal{F}\cap B=\left\{ A\cap B:A\in\mathcal{F}\right\}$ and let
$\pi_{\mathcal{F}}\left(n\right)=\max\left\{ \left|\mathcal{F}\cap B\right|:B\subseteq
  X,\left|B\right|=n\right\} $.

\begin{fact}[Sauer-Shelah lemma]\label{fac: SauerShelah} If $VC(\mathcal{F}) \leq d$ then for
  $n\geq d$ we have
  $\pi_{\mathcal{F}}\left(n\right)\leq\sum_{i\leq d}{n \choose i}=O\left(n^{d}\right)$.
\end{fact}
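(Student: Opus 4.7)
The plan is to prove this by induction on $n+d$, where we carefully track the two subfamilies obtained by removing a point of $X$. This is the classical Sauer--Shelah--Vapnik--Chervonenkis argument.

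For the base cases, if $d=0$ then no singleton is shattered, so every trace $\mathcal{F}\cap B$ has size at most $1=\binom{n}{0}$; and for $n=d$ the bound is trivial since $\sum_{i\leq d}\binom{d}{i}=2^d$ already exceeds any trace on a $d$-element set.

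For the inductive step, I would fix a set $B\subseteq X$ with $|B|=n>d$ realizing $\pi_\mathcal{F}(n)$, pick an element $x\in B$, write $B'=B\setminus\{x\}$, and split the trace $\mathcal{F}\cap B$ into two families of subsets of $B'$:
\begin{align*}
\mathcal{F}_1 &= \{S\cap B' : S\in\mathcal{F}\}, \\
\mathcal{F}_2 &= \{A\subseteq B' : A\in\mathcal{F}\cap B \text{ and } A\cup\{x\}\in\mathcal{F}\cap B\}.
\end{align*}
Partitioning $\mathcal{F}\cap B$ by whether a representative contains $x$ or not gives $|\mathcal{F}\cap B|=|\mathcal{F}_1|+|\mathcal{F}_2|$. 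The family $\mathcal{F}_1$ on the $(n-1)$-element set $B'$ still has VC-dimension at most $d$, so induction yields $|\mathcal{F}_1|\leq\sum_{i\leq d}\binom{n-1}{i}$. The key observation is that if $A\subseteq B'$ is shattered by $\mathcal{F}_2$, then $A\cup\{x\}$ is shattered by $\mathcal{F}$ itself (for each subset $A'\subseteq A$ there is $S\in\mathcal{F}_2$ with $S\cap A=A'$, and then both $S$ and $S\cup\{x\}$ lie in $\mathcal{F}\cap B$, realizing the two extensions $A'$ and $A'\cup\{x\}$). Hence $VC(\mathcal{F}_2)\leq d-1$, and by induction $|\mathcal{F}_2|\leq\sum_{i\leq d-1}\binom{n-1}{i}$.

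Combining the two bounds and invoking Pascal's identity $\binom{n-1}{i}+\binom{n-1}{i-1}=\binom{n}{i}$ term by term yields $|\mathcal{F}\cap B|\leq\sum_{i\leq d}\binom{n}{i}$, as required; the asymptotic $O(n^d)$ then follows by bounding each binomial coefficient by $n^i/i!\leq n^d$. There is no real obstacle here: the only subtle point is the VC-dimension bound for $\mathcal{F}_2$, which requires noticing that membership of $A$ in $\mathcal{F}_2$ encodes the presence of two distinct extensions of $A$ in $\mathcal{F}\cap B$, so shattering by $\mathcal{F}_2$ upgrades to shattering of $A\cup\{x\}$ by $\mathcal{F}$.
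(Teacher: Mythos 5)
Your argument is the classical Sauer--Shelah induction (splitting the trace on $B$ into the trace on $B'=B\setminus\{x\}$ and the family of sets admitting both extensions at $x$, bounding the latter's VC-dimension by $d-1$, and finishing with Pascal's identity), and it is correct, including the counting identity $|\mathcal{F}\cap B|=|\mathcal{F}_1|+|\mathcal{F}_2|$ and the treatment of the base cases. The paper states this lemma as a known Fact without proof, so there is nothing to compare beyond noting that your proof is exactly the standard argument the citation refers to.
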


If $S\subseteq X$ is a subset and $x_1,\ldots,x_n \in X$, we let
$\Av(x_1,\ldots,x_n;S) = \frac 1 n |\{i\leq n: x_i\in S\}|$ (we don't assume that the points
$x_1,\dotsc,x_n$ are distinct).

\begin{fact}[VC-theorem \cite{vapnik1971uniform}, see also {\cite[Section 4]{NIP2}} for a
  discussion]\label{VC-theorem} For any $k > 0$ and $\varepsilon > 0$ there is
  $n =O(k (\frac{1}{\varepsilon})^2 \log \frac{1}{\varepsilon})$ satisfying the following.  For any
  \emph{finite} probability space $(X, \mu)$ and a family $\mathcal{F}$ of subsets of $X$ of
  VC-dimension $\leq k$, there are some $x_1, \ldots, x_n \in X$ such that for any
  $S \in \mathcal{F}$ we have $|\mu(S) - \Av(x_1, \ldots, x_n; S)| \leq \varepsilon$.
\end{fact}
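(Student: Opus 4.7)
The plan is the standard Vapnik--Chervonenkis probabilistic argument: draw $x_1, \ldots, x_n$ independently according to $\mu$ and show that with positive probability the empirical distribution $\varepsilon$-approximates $\mu$ on every $S \in \CF$ simultaneously. For a \emph{single} fixed $S$, Hoeffding's inequality gives
\[
\Pr\bigl[|\mu(S) - \Av(x_1, \ldots, x_n; S)| > \varepsilon\bigr] \;\leq\; 2e^{-2n\varepsilon^2}.
\]
However, $\CF$ may be arbitrarily large, so a direct union bound over $\CF$ fails. The fix is to combine this concentration with the Sauer--Shelah bound (Fact~\ref{fac: SauerShelah}) via a symmetrization trick.

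First I would replace the deviation from $\mu$ by a two-sample deviation. Let $y_1, \ldots, y_n$ be an independent copy of the sample. A routine Chebyshev estimate applied to $\Av(y_i; S)$ shows that as soon as $n\varepsilon^2$ exceeds a fixed absolute constant,
\[
\Pr\bigl[\exists S\in\CF\ |\mu(S)-\Av(x_i;S)|>\varepsilon\bigr] \;\leq\; 2\Pr\bigl[\exists S\in\CF\ |\Av(x_i;S)-\Av(y_i;S)|>\varepsilon/2\bigr].
\]
The crucial observation is that the right-hand event depends on $S$ only through its trace on the $2n$-point set $\{x_1,\ldots,x_n,y_1,\ldots,y_n\}$, and by Fact~\ref{fac: SauerShelah} there are at most $O((2n)^k)$ distinct such traces.

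Finally, conditional on the unordered $2n$-tuple, I would apply an independent random swap $x_i\leftrightarrow y_i$ for each $i$; this preserves the joint law because the pairs $(x_i, y_i)$ are i.i.d. After the swap, $\Av(x_i;S) - \Av(y_i;S)$ becomes a sum of independent mean-zero contributions of magnitude at most $2/n$, so Hoeffding bounds its tail by $2 e^{-n\varepsilon^2/2}$ for each individual trace. A union bound over the $O((2n)^k)$ traces yields a total failure probability of at most $C_1 (2n)^k \cdot e^{-n\varepsilon^2/2}$. Choosing $n = C_2 k (1/\varepsilon)^2 \log(1/\varepsilon)$ for a sufficiently large absolute constant $C_2$ makes this $<1/2$, hence a successful sample must exist. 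The main technical point to be careful about is matching the polynomial blow-up $(2n)^k$ from Sauer--Shelah against the exponential decay $e^{-n\varepsilon^2/2}$, and verifying that the Chebyshev calibration of the symmetrization step does not force $n$ to grow beyond the claimed $O(k \varepsilon^{-2} \log \varepsilon^{-1})$ order.
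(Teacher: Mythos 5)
This statement is one the paper does not prove at all: it is imported as a black-box citation of Vapnik--Chervonenkis (and the discussion in \cite{NIP2}), so there is no internal argument to compare against. Your proposal is exactly the classical proof of that cited theorem --- i.i.d.\ sampling, symmetrization by a ghost sample with a Chebyshev calibration, random swaps plus Hoeffding conditionally on the $2n$ points, and a union bound over traces counted by Sauer--Shelah --- and in outline it is sound; the finiteness of $(X,\mu)$ even spares you all measurability worries about the supremum over $\mathcal{F}$.

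The one place where the write-up falls short of the stated bound is the very point you flag at the end. If you count traces by the crude estimate $O((2n)^k)$, the union bound requires $k\ln(2n)\lesssim n\varepsilon^2$, and with $n = C_2 k\varepsilon^{-2}\log(1/\varepsilon)$ the left side contains a term $k\ln k$ which is not dominated by $C_2 k\log(1/\varepsilon)$ when $k$ is large compared to $1/\varepsilon$; so ``choosing $C_2$ a sufficiently large absolute constant'' does not literally make the failure probability less than $1/2$ uniformly in $k$, and you would only get $n = O\bigl(\varepsilon^{-2}(k\log(1/\varepsilon)+k\log k)\bigr)$. The standard repair is to use the sharper form of the Sauer--Shelah estimate, $\sum_{i\le k}\binom{2n}{i}\le (2en/k)^{k}$ (which follows from the same binomial sum as in Fact~\ref{fac: SauerShelah}): then the trace count is $(2en/k)^k$, the $k$ cancels inside the logarithm, and the requirement becomes $k\ln(2en/k)\lesssim n\varepsilon^2$, i.e.\ $k\bigl(O(1)+2\ln(1/\varepsilon)+\ln\ln(1/\varepsilon)\bigr)\le c\,C_2\,k\ln(1/\varepsilon)$, which indeed holds for an absolute constant $C_2$, giving the claimed $n = O\bigl(k(\tfrac{1}{\varepsilon})^2\log\tfrac{1}{\varepsilon}\bigr)$ independently of the size of $k$ relative to $\varepsilon$. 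With that substitution (and not worrying about the exact constant in the Hoeffding exponent, which you quote as $e^{-n\varepsilon^2/2}$ but is more safely $e^{-n\varepsilon^2/8}$ after the $\varepsilon/2$ and the range-$2/n$ increments are accounted for), your argument is complete.
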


\medskip

An important class of NIP theories was introduced by Shelah in his work on the classification
program \cite{ShelahClassification}. It has attracted a lot of attention recently, both from the
point of view of pure model theory and due to some applications in algebra and geometry. We refer to
\cite{AdlerNIP, SimBook} for an introduction to the area.

As was observed early on in \cite{Laskowski}, the original definition of NIP is equivalent to the
following one (see \cite{VCD1} for a more detailed account).
\begin{defn} Let $T$ be a complete theory and $\phi(x,y)$ a formula in $T$, where $x,y$ are tuples
  of variables, possibly of different length. We say that \emph{the formula $\phi(x,y)$ is NIP} if
  there is a model $\CM$ of $T$ such that the family of sets $\{ \phi(M,a) : a \in M^{|y|} \}$ is a
  VC-class. In this case we define the VC-dimension of $\phi(x,y)$ to be the VC-dimension of this
  class. (It is easy to see that by elementarily equivalence the above does not depend on the model
  $\CM$ of $T$.)

  \emph{A theory $T$ is NIP} if all formulas in $T$ are NIP.
\end{defn}

Slightly abusing terminology we say that a structure $\CM$ is NIP if its complete theory $\Th(\CM)$
is NIP. Restated differently, a structure $\CM$ is \emph{an NIP structure} if for every formula
$\phi(x,y)$ the family of $\phi$-definable sets
$\mathcal{F}_\phi=\{\phi(M,a) : a \in M^{|y|} \}$ is a VC-class.

\medskip Given a set of formulas $\Delta(x,y)$ and a set $B \subseteq M^{|y|}$, we say that $\pi(x)$
is a \emph{$\Delta$-type over $B$} if
$\pi(x) \subseteq \bigcup_{\phi(x,y) \in \Delta, b \in B} \left\{ \phi(x,b), \neg \phi(x,b) \right\}
$
and there is some $\mathcal{N} \succeq \CM$ and some $a \in N^{|x|}$ satisfying simultaneously all
formulas from $\pi(x)$. By a \emph{complete} $\Delta$-type over $B$ we mean a maximal $\Delta$-type
over $B$. We will denote by $S_{\Delta}(B)$ the collection of all complete $\Delta$-types over
$B$. In view of the remarks above, the following is an immediate corollary of the Sauer-Shelah
lemma.
\begin{fact}\label{fac: PolyTypesNIP} A structure $\CM$ is NIP if and only if for any finite set of
  formulas $\Delta(x,y)$ there is some $d \in \mathbb{N}$ such that $|S_{\Delta}(B)| = O(|B|^d)$ for
  any finite $B \subseteq M^{|y|}$.
\end{fact}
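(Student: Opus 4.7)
The plan is to reduce the statement to the single-formula case and then combine the Sauer--Shelah lemma (Fact \ref{fac: SauerShelah}) with the fact that NIP is symmetric under swapping the object and parameter variables of the formula. Writing $\Delta = \{\phi_1(x,y),\dots,\phi_k(x,y)\}$, every complete $\Delta$-type over $B$ decomposes as the union of its restrictions to the individual $\phi_i$'s, so $|S_\Delta(B)| \le \prod_{i=1}^k |S_{\phi_i}(B)|$. It therefore suffices to prove the equivalence when $\Delta$ consists of a single formula $\phi(x,y)$.

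The next step is to reinterpret $|S_\phi(B)|$ as the trace count of a "dual" VC-family. A complete $\phi$-type $\pi$ over finite $B$ is determined by the subset $S_\pi := \{b \in B : \phi(x,b) \in \pi\}$, and by compactness $S_\pi$ is realized precisely when it equals $\{b \in B : \CM \models \phi(a,b)\}$ for some $a$ in a sufficiently saturated elementary extension of $\CM$. Hence $|S_\phi(B)| = |\mathcal{F}_{\phi^*} \cap B|$, where $\mathcal{F}_{\phi^*} := \{\phi(a,M) : a \in M^{|x|}\}$ is the dual family corresponding to $\phi^*(y,x) := \phi(x,y)$, computed in a sufficiently saturated model.

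For the forward direction, I invoke the classical fact (an instance of Assouad's inequality between primal and dual VC-dimensions) that $\phi(x,y)$ is NIP if and only if $\phi^*(y,x)$ is NIP. Granting this, NIP of $\phi$ gives finite VC-dimension $d^*$ of $\mathcal{F}_{\phi^*}$, and Fact \ref{fac: SauerShelah} immediately yields $|S_\phi(B)| = |\mathcal{F}_{\phi^*} \cap B| = O(|B|^{d^*})$. Summing over $\phi \in \Delta$ gives $|S_\Delta(B)| = O(|B|^{d})$ with $d := \sum_{i=1}^k d_{\phi_i}^*$.

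For the converse, assume the polynomial bound. If some formula $\phi$ had infinite VC-dimension, then by the same primal/dual symmetry $\phi^*$ would have infinite VC-dimension, so for every $n$ there would exist a set $B \subseteq M^{|y|}$ of size $n$ fully shattered by $\mathcal{F}_{\phi^*}$, forcing $|S_\phi(B)| = 2^n = 2^{|B|}$ and contradicting the polynomial bound. Hence every formula is NIP. The one ingredient requiring more than routine bookkeeping is the primal/dual symmetry of NIP used in both directions; all the remaining work is Sauer--Shelah and a direct identification of $S_\phi(B)$ with a family of traces.
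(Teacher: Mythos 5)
Your proof is correct, and its core is exactly the argument the paper treats as immediate: identify a complete $\phi$-type over a finite $B$ with the trace on $B$ of the ``dual'' family $\{\phi(a,N):a\in N^{|x|}\}$ (in a saturated extension), apply Sauer--Shelah for the forward direction, and note that shattering produces $2^{|B|}$ types for the converse; the reduction from $\Delta$ to a single formula via the product bound is also fine. The one place you diverge is in importing Assouad's primal/dual VC-dimension inequality to pass between $\phi$ and $\phi^*$: in this setting that external input is not needed, since ``$\CM$ is NIP'' quantifies over all formulas with all partitions of the variables, so $\phi^*(y,x)=\phi(x,y)$ is itself a formula of the structure and is NIP by hypothesis (giving the forward direction directly), while in the converse you may simply apply the assumed polynomial bound to the swapped formula $\phi^*$ and the shattered set $A\subseteq M^{|x|}$ witnessing failure of NIP for $\phi$, rather than dualizing the shattered set first. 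Your route is perfectly valid --- Assouad's inequality is a true and standard fact --- it just proves slightly more than is required; the definitional symmetry gives the same conclusion with no quantitative duality lemma.
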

\medskip

\subsection{Distality}\label{sec: Distality} The class of distal theories is defined and studied in
\cite{Distal}, with the aim to isolate the class of purely unstable NIP theories (as opposed to the
class of stable theories which are always NIP, see also \cite{SimBook}). The original definition is
in terms of some properties of indiscernible sequences, but the following explicit combinatorial
characterization of distality given in \cite{ExtDefII} can be used as an alternative definition.

\begin{fact}\label{StrongHonestDef} Let $T$ be a complete NIP theory and $\CM$ a model of $T$. The
  following are equivalent.
  \begin{enumerate}
  \item $T$ is distal (in the sense of the original definition, see Fact~\ref{fac: distality
      indiscernibles}).
  \item For every formula $\phi(x,y)$ there is a formula $\psi(x,y_1,\dotsc,y_n)$ with
    $|y_1|=\dotsb=|y_n|=|y|$ such that: for any finite $B \subseteq M^{|y|}$ with $|B| \geq 2$ and
    any $a \in M^{|x|}$, there are $b_1,\dotsc,b_n \in B$ such that
    $\CM \models \psi(a,b_1,\dotsc,b_n)$ and $\psi(x,b_1,\dotsc,b_n) \vdash \mathrm{tp}_\phi(a/B)$
    (i.e.\  for any $b \in B$ either $\phi(M,b)\supseteq \psi(M,b_1,\dotsc,b_n)$ or
    $\phi(M,b)\cap \psi(M,b_1,\dotsc,b_n)=\emptyset$).

  \end{enumerate}
\end{fact}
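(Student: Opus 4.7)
\textbf{Proof plan for Fact~\ref{StrongHonestDef}.} The plan is to prove the equivalence along the lines of Chernikov--Simon \cite{ExtDefII}, using the indiscernible-sequence formulation of distality recorded in Fact~\ref{fac: distality indiscernibles}. Recall that in that formulation, $T$ is distal if whenever $(a_i)_{i\in I_1+I_2}$ is an indiscernible sequence with $I_1,I_2$ infinite without endpoints, and $c$ is a tuple such that $(a_i)_{i\in I_1}\,\smallfrown\, c\,\smallfrown\,(a_i)_{i\in I_2}$ is still indiscernible, then $(a_i)_{i\in I_1+I_2}$ is already indiscernible over $c$.

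For the direction $(2)\Rightarrow(1)$, I would argue by contradiction. Suppose (2) holds but distality fails, witnessed by an indiscernible sequence $I_1+I_2$, a tuple $a$ such that $I_1+a+I_2$ is indiscernible, and a formula $\phi(x,y)$ and tuples $\bar b,\bar b'$ (chosen from corresponding positions in $I_1+I_2$) with $\mathrm{tp}(\bar b)=\mathrm{tp}(\bar b')$ but $\phi(a,\bar b)\wedge\neg\phi(a,\bar b')$. Apply the strong honest definition $\psi$ associated to $\phi$ to the tuple $a$ and a large finite $B\subseteq I_1\cup I_2$ containing $\bar b$ and $\bar b'$; this produces parameters $c_1,\dots,c_n\in B$ with $\models\psi(a,c_1,\dots,c_n)$ and such that $\psi(x,c_1,\dots,c_n)$ isolates $\phi$-$\mathrm{tp}(a/B)$. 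Using the indiscernibility of $I_1+a+I_2$ we can move the $c_i$'s together with $\bar b$ to another copy having $\bar b'$ in place of $\bar b$ while keeping $\psi(a,\cdot)$ satisfied, and conclude that the $\phi$-type of $a$ over $\bar b$ and over $\bar b'$ must coincide, a contradiction.

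For the harder direction $(1)\Rightarrow(2)$, I would begin with a compactness argument. Assuming (2) fails for some $\phi(x,y)$, for every candidate $\psi(x,y_1,\dots,y_n)$ one obtains a finite $B_\psi$ and a tuple $a_\psi$ such that no choice of parameters $\bar c$ in $B_\psi$ with $\models\psi(a_\psi,\bar c)$ yields a formula implying $\phi$-$\mathrm{tp}(a_\psi/B_\psi)$. Pass to a sufficiently saturated elementary extension and combine these witnesses into a single configuration: a long sequence $(b_i)_{i<\kappa}$ and a tuple $a$ such that for every $\psi$ and every $\bar c$ from $(b_i)$ with $\models\psi(a,\bar c)$, $\psi(x,\bar c)$ does not isolate $\phi$-$\mathrm{tp}(a/(b_i))$. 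Using Erd\H os--Rado extract an indiscernible sequence $I=(b'_i)_{i\in\QQ}$ from $(b_i)$ preserving this property, and then, using NIP and the standard technique of taking an average/honest indiscernible over $a$, adjust so that $I$ splits as $I_1+I_2$ with $I_1+a+I_2$ indiscernible while $I_1+I_2$ is \emph{not} indiscernible over $a$ (the fact that $\phi$-$\mathrm{tp}(a/I)$ is not finitely isolable inside $I$ gives rise to a pair of equal-type tuples on which $\phi(a,\cdot)$ disagrees). This contradicts distality.

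The main obstacle is the extraction step in $(1)\Rightarrow(2)$: one must convert a uniform failure of strong honest definitions into an actual indiscernible configuration that violates the indiscernible formulation of distality, which requires a careful use of Ramsey and NIP (in particular, the fact that alternation of $\phi(a,b_i)$ along an indiscernible sequence is bounded) to separate the sequence into two halves around $a$ while retaining indiscernibility of $I_1+a+I_2$. Everything else is bookkeeping with formulas and types.
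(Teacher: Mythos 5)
You are sketching a proof of a statement the paper itself does not prove: Fact~\ref{StrongHonestDef} is quoted from \cite{ExtDefII}, so your attempt has to be measured against the argument there. Your direction $(2)\Rightarrow(1)$ is right in spirit but has a gap as written: the isolating parameters $c_1,\dotsc,c_n$ produced by the strong honest definition may be interleaved with $\bar b$ and $\bar b'$ in an arbitrary way, and indiscernibility of $I_1+a+I_2$ only identifies order-isomorphic configurations, so "moving the $c_i$'s together with $\bar b$ onto $\bar b'$" is not available in general; one also has to pass between the two-cut formulation of Fact~\ref{fac: distality indiscernibles} and the "indiscernible over $c$" formulation you use. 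This is repairable (take $B$ a long finite block, use pigeonhole so that the compared tuples sit in a stretch free of the chosen parameters, and chain through intermediate tuples), but the contradiction does not follow from the one-line exchange argument you give.

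The serious gap is in $(1)\Rightarrow(2)$. First, the compactness step is not routine: failure of (2) gives, for each candidate $\psi$ separately, some finite $B_\psi$ and $a_\psi$; to merge these into a single configuration $(a,(b_i))$ you need simultaneous failure for any finite collection of $\psi$'s, which requires a coding/closure trick you do not address. Second, and decisively, after extracting an indiscernible sequence $I$ such that no formula with parameters from $I$ satisfied by $a$ isolates $\mathrm{tp}_\phi(a/I)$, there is no reason that $I$ splits as $I_1+I_2$ with $I_1+a+I_2$ indiscernible: a fixed tuple $a$ obtained this way does not in general fill any cut of the extracted sequence, and the phrase "adjust using NIP and an average/honest indiscernible over $a$" is exactly the missing content, not a known routine. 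The actual proof in \cite{ExtDefII} does not go this route: it starts from (uniform) honest definitions — a nontrivial NIP theorem proved via pairs/externally definable sets — which already supply a candidate $\psi$ with parameters in $B$, and then uses distality (through control of limit types of cuts and base-change arguments for indiscernibles in the spirit of Fact~\ref{StrongBaseChange}, with sequences built from finitely satisfiable types so that the relation of $a$ to the cuts is controlled) to upgrade these to strong honest definitions, plus a separate compactness argument for the uniformity in $B$. Without some mechanism of this kind tying $a$ to a cut of the sequence, your outline cannot be completed as stated.
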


\begin{rem}\label{srem:distal-one}
  It is not hard to see that if $\CM$ satisfies Fact~\ref{StrongHonestDef}(2) for all formulas
  $\phi(x,y)$ with $|x|=1$ then it satisfies it for all formulas, i.e. $\CM$ is distal.  Besides,
  any $\CM$ satisfying \ref{StrongHonestDef}(2) is automatically NIP (easy to see using the
  equivalence from Fact~\ref{fac: PolyTypesNIP}), so the assumption that $\CM$ is NIP in 
  Fact~\ref{StrongHonestDef} is used to deduce (2) from (1).
\end{rem}

\begin{rem}\label{rem: UDTFS}
  An immediate corollary of Fact~\ref{StrongHonestDef}(2) is that in a distal structure, for any
  formula $\phi(x,y)$ there is a formula $\psi'(y,y_1, \ldots, y_n)$ such that for any finite
  $B \subseteq M^{|y|}$ with $|B| \geq 2$ and $a \in M^{|x|}$, there are some
  $b_1, \ldots, b_n \in B$ such that $\phi(a,B) = \psi'(B, b_1, \ldots, b_n)$. Namely, one can
  take $\psi'(y,y_1, \ldots, y_n) = \forall x (\psi(x,y_1, \ldots, y_n) \rightarrow
  \phi(x,y))$. In fact, this corollary characterizes NIP (see \cite{ExtDefII} for the details).
\end{rem}

We list some examples of distal structures (providing more details than we normally would, for the
sake of a non model-theorist reader).

\subsubsection{O-minimal structures}

A structure $\CM = \left(M,<,\ldots\right)$ is o-minimal if every definable subset of $M$ is a
finite union of singletons and intervals (with endpoints in $M\cup\{\pm\infty\}$). From this
assumption one obtains cell decomposition for definable subsets of $M^{n}$, for all $n$. Moreover, a
cell decomposition of a definable set is uniformly definable in terms of its definition (see
\cite{van1998tame} for a detailed treatment of o-minimality, or \cite[Section 3]{scanlonminimality}
and references there for a quick introduction). Examples of o-minimal structures include
$\bar{\mathbb{R}} = \left(\mathbb{R},+,\times\right)$,
$\mathbb{R}_{\exp} = \left(\mathbb{R},+,\times,e^{x}\right)$,
$\mathbb{R}_{\text{an}} = \left(R,+,\times,f\restriction_{\left[0,1\right]^k}\right)$ for $f$
ranging over all functions real-analytic on some neighborhood of $[0,1]^k$, or the combination of
both $\mathbb{R}_{\text{an},\exp }$. It is straightforward to verify that if $\CM$ is o-minimal then
it satisfies Fact~\ref{StrongHonestDef}(2) for all formulas $\phi(x,y)$ with $|x|=1$.

\begin{sample}\label{sample-real} The field of reals $\RR$.

  By Tarski's quantifier elimination, for each $n$ the definable subsets of $\RR^n$ are exactly the
  semialgebraic sets, namely finite Boolean combinations of sets defined by polynomial equations
  $p(x_1,\dotsc,x_n)=0$ and inequalities $q(x_1,\dotsc x_n)>0$ for
  $p(\bar x),q(\bar x) \in \RR[x_1,\dotsc,x_n]$.  It is o-minimal, and so distal.
\end{sample}

\subsubsection{Ordered dp-minimal structures}

More generally, it is proved in \cite{Distal} that every ordered dp-minimal structure is distal (see
Fact~\ref{fac: dp-min distal totally indisc}). Examples of ordered dp-minimal structures include
weakly o-minimal structures and quasi-o-minimal structures. An ordered structure $\CM$ is weakly
o-minimal (quasi-o-minimal) if in every elementary extension, every definable subset is a finite
union of convex subsets (respectively, a finite boolean combination of singletons, intervals and
$\emptyset$-definable sets \cite{belegradek2000quasi}).

\begin{sample}\label{sample:integers} $\ZZ$ as an ordered group.

  By Presburger's quantifier elimination, for each $n$ the definable subsets of $\ZZ^n$ are finite
  boolean combinations of sets of the following types:
$$S_{\bar a}^= =\{ (x_1,\dotsc,x_n)\in \RR^n \colon
a_1x_1+\dotsb+a_nx_n=a_0\},$$
$$S_{\bar{a}}^> =\{ (x_1,\dotsc,x_n)\in \RR^n \colon 
a_1x_1+\dotsb+a_nx_n > a_0\},$$
 $$S^k_{\bar a}=\{ (x_1,\dotsc,x_n)\in \RR^n \colon \exists y\in \ZZ \,
 ky= a_0+a_1x_1+\dotsb+a_nx_n\},$$
 for $\bar a=(a_0,a_1,\dotsc,a_n)\in \ZZ^{n+1}$ and $k\in \NN$.  This structure is quasi-o-minimal
 (see \cite[Example 2]{belegradek2000quasi}).
\end{sample}

\begin{sample}\label{ex: RCVF}The \em{valued} field $\KK=\bigcup_{n\in \NN}\RR((T^{1/n}))$ of Puiseux
  power series over $\RR$, in the language
  $L_{\mathrm{div}} = \{0,1, <, +, -, \times, v(x) \leq v(y) \}$.

  Using quantifier elimination from \cite{dickmann1987elimination}, for each $n$ the definable
  subsets of $\KK^n$ (in the language of valued fields) are finite boolean combinations of sets of
  the following types:
$$S_p^= =\{ (x_1,\dotsc,x_n)\in \KK^n \colon
p(x_1,\dotsc,x_n)=0\},$$
$$S_{p}^> =\{ (x_1,\dotsc,x_n)\in \KK^n \colon
p(x_1,\dotsc,x_n)> 0\},$$
$$S_{p,q}^v =\{ (x_1,\dotsc,x_n)\in \RR^n \colon v(p(x_1,\dotsc,x_n)) \geq v(q(x_1,\dotsc,x_n))\},$$
for $p, q\in \mathbb \KK[x_1,\dotsc,x_n]$ and $k\in \NN$.

This structure is a model of the complete theory RCVF of real closed fields equipped with a proper
convex valuation ring, and by \cite{dickmann1987elimination} it is weakly o-minimal.

\end{sample}

\subsubsection{P-minimal structures with definable Skolem functions}

\begin{sample}\label{sample:qp} 
  By a result of Macintyre \cite{macintyre1976definable} the field of $p$-adics $\mathbb{Q}_p$
  eliminates quantifiers in the language $L_p = \{0,1,+,\times, v(x) \leq v(y), P_n(x)\}$, where for
  $n \geq 2$ we have $P_n(x) \iff \exists y (x = y^n)$.  It follows that for each $n$ the definable
  subsets of $\QQ_p^n$ are finite boolean combinations of sets of the following three types:
$$S_p =\{ (x_1,\dotsc,x_n)\in \RR^n \colon p(x_1,\dotsc,x_p)=0 \},$$
$$S_{p,q}^v =\{ (x_1,\dotsc,x_n)\in \RR^n \colon v(p(x_1,\dotsc,x_n))
\geq v(q(x_1,\dotsc,x_n))\},$$
$$S^k_{p}=\{ (x_1,\dotsc,x_n)\in \RR^n \colon \exists y\in \QQ_p \, y^k= p(x_1,\dotsc,x_n),$$ for
$p,q\in \QQ_p[x_1,\dotsc,x_n]$ and $k\in \NN$.
\end{sample}

Similarly to the o-minimal case, there is a notion of minimality for expansions of
$\mathbb{Q}_p$. Namely, a structure $\CM$ in a language $L \supseteq L_p$ is $p$-minimal if in every
model of $\Th(M)$, every definable subset in one variable is quantifier-free definable just using
the language $L_p$ \cite{haskell1997version}. $P$-minimal structures with an additional assumption
of definability of Skolem functions satisfy an analogue of the $p$-adic cell decomposition of
Denef. A motivating example of a $p$-minimal theory with definable Skolem functions is the theory
$p\textrm{CF}_{\text{an}}$ of the field of $p$-adic numbers $\mathbb{Q}_p$ expanded by all
sub-analytic subsets of $\mathbb{Z}_p$ \cite{van1999one}.

By \cite[Corollary 7.8]{VCD1}, every $p$-minimal structure with definable Skolem functions is
dp-minimal. Since $(\mathbb{Q}_p, +, \times, 0,1)$ is distal \cite{Distal} (can be also verified
using Fact~\ref{fac: dp-min distal totally indisc}), it follows by Remark~\ref{rem: dp-min dist exp}
that any $p$-minimal theory with Skolem functions is distal.

\subsection{Keisler Measures}\label{sec: gen stab meas}

Let $\CM$ be a structure.

Recall that a \emph{Keisler measure} on $M^n$ is a finitely additive probability measure on the
Boolean algebra of all definable subsets of $M^n$, i.e.\  it is a function $\mu$ that assigns to every
definable $X\subseteq M^n$ a number $\mu(X)\in [0,1]$ with $\mu(\emptyset)=0$, $\mu(M^n)=1$ and
\[ \mu(X\cup Y)=\mu(X)+\mu(Y) -\mu(X\cap Y) \]
for all definable subsets $X,Y\subseteq M^n$. Given a formula $\phi(x)$ with parameters from $M$ and
a Keisler measure $\mu$ on $M^{|x|}$, we will write $\mu(\phi(x))$ to denote $\mu(\phi(M^{|x|}))$.

In this paper we will deal mostly with the so-called generically stable and smooth Keisler measures.

\emph{Generically stable measures on $M^n$} are defined as Keisler measures on $M^n$ admitting a (unique) global $M$-invariant extension which is both finitely satisfiable in $M$ and definable over $M$ (see Remark \ref{rem: definable measure def}). In the NIP case, according to the following fact from
\cite[Theorem 3.2]{NIP3} (see also \cite[Section 7.5]{SimBook}), they can also be defined in terms
of the structure $\mathcal M$ alone without mentioning global measures.

\begin{fact}\label{GenStable} Let $\mathcal M$ be an NIP structure and $\mu$ a Keisler measure on
  $M^k$. Then the following are equivalent.
  \begin{enumerate}
  \item The measure $\mu$ is generically stable.
  \item For every formula $\phi(x,y)$ with $|x|=k$ and $\varepsilon >0$ there are some
    $a_1, \ldots, a_m \in M^k$ such that
    $|\mu(\phi(x,b)) - \Av \left( a_1, \ldots, a_m; \phi(x,b) \right)| < \varepsilon$ for any
    $b \in M^{|y|}$.
  \end{enumerate}
\end{fact}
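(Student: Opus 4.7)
The plan is to prove the two directions separately, using that generic stability of $\mu$ on $M^k$ is defined via the existence of a global extension $\bar\mu$ (to some monster $\mathcal{U} \succeq \CM$) which is simultaneously definable over $M$ and finitely satisfiable in $M$. The uniform empirical approximation in (2) is a purely ``internal'' property of $\CM$, and the game is to translate between it and the ``external'' condition involving $\bar\mu$.

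For the direction (2) $\Rightarrow$ (1), I would use the approximating tuples to build $\bar\mu$ directly. For each formula $\phi(x,y)$ and each $n\in\NN$, fix a tuple $\bar a_{\phi,n}$ in $M^k$ of length $m_{\phi,n}$ witnessing (2) with $\varepsilon = 1/n$. Given any $\phi(x,c)$ with $c\in \mathcal{U}^{|y|}$, define $\bar\mu(\phi(x,c))$ as a cluster point of $\Av(\bar a_{\phi,n};\phi(x,c))$ as $n\to\infty$ (e.g.\ along a nonprincipal ultrafilter, chosen coherently so that finite additivity passes to the limit). The resulting $\bar\mu$ is $\Aut(\mathcal{U}/M)$-invariant because each $\Av$ depends only on $\tp(c/M)$; it is finitely satisfiable in $M$ because any $\phi(x,c)$ with $\bar\mu(\phi(x,c))>0$ must, for large $n$, contain one of the $a_i\in M$ in $\bar a_{\phi,n}$; and it is definable over $M$ because $\{c : \bar\mu(\phi(x,c)) > r\}$ is, up to arbitrarily small error in $r$, defined by the formula $\Av(\bar a_{\phi,n};\phi(x,c)) > r$, which has parameters in $M$.

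For the direction (1) $\Rightarrow$ (2), I would proceed in two steps. First, use finite satisfiability of $\bar\mu$ in $M$ together with NIP to show that for each $\phi(x,y)$ and $\varepsilon>0$ there is a \emph{finitely supported} probability measure $\nu=\sum_{i\leq r}\lambda_i \delta_{a_i}$ on $M^k$ such that $|\nu(\phi(x,b))-\mu(\phi(x,b))|<\varepsilon/2$ uniformly in $b\in M^{|y|}$. This step is the heart of the proof: one knows a priori only pointwise approximation from finite satisfiability, and the upgrade to uniform approximation is where NIP enters (typically via Grothendieck's double-limit criterion or a direct convex-geometry argument using definability of $\bar\mu$ to partition $M^{|y|}$ into finitely many pieces on which the value of $\bar\mu(\phi(x,b))$ is nearly constant, and then use finite satisfiability on each piece). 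Second, apply the VC-theorem (Fact~\ref{VC-theorem}) to $\nu$, viewed as a probability measure on the finite set $\{a_1,\ldots,a_r\}$ on which the family $\{\phi(x,b) : b\in M^{|y|}\}$ has VC-dimension at most $d=VC(\phi)<\infty$ by NIP. This produces an $M$-sample $x_1,\ldots,x_n$ of length $n = O\!\bigl(d(1/\varepsilon)^2\log(1/\varepsilon)\bigr)$ with $|\Av(x_1,\ldots,x_n;\phi(x,b))-\nu(\phi(x,b))|<\varepsilon/2$ for all $b$. The triangle inequality then yields (2).

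The principal obstacle is the uniform finite-support approximation in the first step of (1) $\Rightarrow$ (2): finite satisfiability gives only that each $\mu(\phi(x,b))$ individually lies in the closure of $\{\nu(\phi(x,b)) : \nu \text{ finitely supported on } M\}$, whereas uniformity in $b$ requires a simultaneous witness. Without NIP this simply fails, so the argument must genuinely use the polynomial bound on $|S_\phi(B)|$ (Fact~\ref{fac: PolyTypesNIP}), or equivalently the Sauer--Shelah estimate, to reduce to a Grothendieck-style compactness/convexity statement. Once this uniform approximation is in hand, the remainder is a clean application of the VC-theorem, and the reverse direction is essentially formal.
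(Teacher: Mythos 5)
The paper does not actually prove this statement: it is imported as a Fact from \cite[Theorem 3.2]{NIP3} (see also \cite[Section 7.5]{SimBook}), so your proposal has to be measured against the argument given there. Your direction (2)$\Rightarrow$(1) is essentially correct and is close to the standard one, though the ultrafilter is an unnecessary complication and hides the one real point: for two tuples $\bar a,\bar a'$ witnessing (2) for $\phi$ at scales $\varepsilon,\varepsilon'$, the statement that $|\Av(\bar a;\phi(x,b))-\Av(\bar a';\phi(x,b))|\le\varepsilon+\varepsilon'$ for all $b$ is first order over $M$, hence transfers to parameters in the monster; so the empirical averages converge \emph{uniformly}, the limit $\bar\mu$ is well defined, and finite additivity, $M$-invariance, definability and finite satisfiability all follow at once (this direction uses no NIP). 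Without that transfer step your additivity and definability claims are not justified, but the fix is routine.

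The genuine gap is in (1)$\Rightarrow$(2), exactly at the step you call the heart. Your Step 2 is fine (one could even just approximate the weights of $\nu$ by rationals), but the mechanism you propose for Step 1 does not prove it. Definability does give that $q\mapsto\bar\mu(\phi(x,b))$, $b\models q$, is continuous on $S_y(M)$, so $M^{|y|}$ splits into finitely many pieces on which this value is nearly constant; but on a single piece, say where the value is near $1/2$, finite satisfiability only says that every $\phi(x,b)$ and every $\neg\phi(x,b)$ meets $M^k$, i.e.\ it gives approximation of $\mu$ by convex combinations of points of $M$ on each \emph{finite} set of instances. Producing one finite tuple whose frequencies are near $1/2$ simultaneously for all $b$ in the piece is exactly the original uniformity problem, so the partition buys nothing. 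The missing idea in \cite{NIP3} is to sample from $\mu$ itself: definability makes the Morley powers $\mu^{(n)}$ well defined, NIP together with a VC-theorem/law-of-large-numbers argument shows that $\mu^{(n)}$-most $n$-tuples $\varepsilon$-approximate $\mu(\phi(x,b))$ uniformly in $b$, and finite satisfiability (inherited by $\mu^{(n)}$ from $\mu$ being definable and finitely satisfiable) then realizes such a tuple with coordinates in $M^k$. A genuine Grothendieck-style alternative does exist --- pointwise limits of convex combinations of $M$-points upgraded to uniform limits via Bourgain--Fremlin--Talagrand angelicity, dominated convergence and Mazur's lemma --- but it has to be carried out; naming it does not close the gap, and the piecewise-constancy argument you sketch is not it.
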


The VC-theorem implies that in NIP theories, for \emph{any} Keisler measure, a uniformly definable
family of sets admits an $\varepsilon$-approximation by types (see \cite[Section 4]{NIP2}). 
Fact~\ref{GenStable}(2) implies that with respect to a generically stable measure, there are
$\varepsilon$-approximations by \emph{elements} of a model rather than just by types over it. We
remark that the bound on the size of $\varepsilon$-approximations depends just on the VC-dimension
of the formula (so uniform over all generically stable measures).

\begin{prop}\label{UniformEpsilonNetGenStable} Let $\CM$ be an NIP structure. Then for any
  $k\in \omega$ and any $\varepsilon > 0$ there is some
  $n =O(k (\frac{2}{\varepsilon})^2 \log \frac{2}{ \varepsilon})$ such that: for any formula
  $\phi(x,y)$ of VC-dimension at most $k$ and any generically stable measure $\mu$ on $M^{|x|}$,
  there are some $a_1, \ldots, a_n \in M^{|x|}$ such that for any $b \in M^{|y|}$,
  $|\mu( \phi(x,b) ) - \Av(a_1, \ldots, a_n;\phi(x,b))| < \varepsilon$.
\end{prop}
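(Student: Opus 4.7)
The plan is to combine the two approximation facts already stated in the excerpt in a two-step manner: first approximate $\mu$ by an average over some (uncontrolled size) tuple of points from $M^{|x|}$ using generic stability, then thin out that tuple to the desired size using the VC-theorem.

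More precisely, fix a formula $\phi(x,y)$ of VC-dimension $\leq k$ and a generically stable Keisler measure $\mu$ on $M^{|x|}$. Apply Fact~\ref{GenStable}(2) with error $\varepsilon/2$ to obtain points $a_1,\ldots, a_m \in M^{|x|}$ (for some $m$, depending on $\mu$ and $\phi$, which we do \emph{not} attempt to control) such that
\[
  \bigl|\mu(\phi(x,b)) - \Av(a_1,\ldots,a_m;\phi(x,b))\bigr| < \varepsilon/2
  \quad\text{for every } b \in M^{|y|}.
\]
Now consider the finite probability space $X = \{a_1,\ldots,a_m\}$ with the uniform counting measure $\nu$, and the family $\mathcal{F} = \{\phi(M^{|x|},b) \cap X : b \in M^{|y|}\}$ of traces on $X$. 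Since $\phi(x,y)$ has VC-dimension $\leq k$ (and restricting to a subset cannot increase VC-dimension), $\mathcal{F}$ is a VC-class of dimension $\leq k$.

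Apply the VC-theorem (Fact~\ref{VC-theorem}) to $(X,\nu)$ and $\mathcal{F}$ with error $\varepsilon/2$. This yields some $n = O\!\left(k\bigl(\tfrac{2}{\varepsilon}\bigr)^2 \log \tfrac{2}{\varepsilon}\right)$ and points $a_{i_1},\ldots, a_{i_n} \in X$ (the key point is that $n$ depends only on $k$ and $\varepsilon$, not on $m$ or on $\mu$) such that for every $b \in M^{|y|}$,
\[
  \bigl|\Av(a_1,\ldots,a_m;\phi(x,b)) - \Av(a_{i_1},\ldots,a_{i_n};\phi(x,b))\bigr| < \varepsilon/2.
\]
Combining the two estimates by the triangle inequality gives
\[
  \bigl|\mu(\phi(x,b)) - \Av(a_{i_1},\ldots,a_{i_n};\phi(x,b))\bigr| < \varepsilon
\]
uniformly in $b$, as required.

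The argument is essentially a soft combination of two facts already in hand, so there is no real obstacle. The only point worth emphasizing is the uniformity: Fact~\ref{GenStable}(2) gives an approximation by an average, but with no a priori bound on the number of sample points, so on its own it does not suffice for the quantitative conclusion. The role of the VC-theorem is precisely to pass from an arbitrary-size average over $M^{|x|}$ to a sub-average of controlled size, using only the VC-dimension of $\phi$, and thereby produce a bound that is uniform over all generically stable measures.
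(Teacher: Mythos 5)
Your proposal is correct and is essentially the paper's own proof: both apply Fact~\ref{GenStable}(2) with error $\varepsilon/2$ to get a finite average of uncontrolled size, then apply the VC-theorem (Fact~\ref{VC-theorem}) to that finite measure to extract a sub-average of size $n$ depending only on $k$ and $\varepsilon$, and conclude by the triangle inequality. The only cosmetic difference is that the paper applies the VC-theorem directly to the averaging measure $\nu(\phi(x,b)) = \Av(a_1,\ldots,a_m;\phi(x,b))$ rather than to the uniform measure on the underlying set (which matters only if the $a_i$ repeat, and is harmless either way).
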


\begin{proof}

  Let $n \in \omega$ be given for $k$ and $\varepsilon$ by Fact~\ref{VC-theorem}.  Let $\phi(x,y)$
  be a formula of VC-dimension at most $k$ and $\mu$ an arbitrary generically stable measure on
  $M^{|x|}$.  By Fact~\ref{GenStable}(2) there are some $a_1', \ldots, a_m' \in M^{|x|}$ such that
  for any $b\in M^{|y|}$ we have $|\mu(\phi(x,b)) - \nu(\phi(x,b))| < \varepsilon/2$, where
  $\nu( \phi(x,b) ) = \Av(a_1', \ldots, a_m'; \phi(x,b))$. Now applying Fact~\ref{VC-theorem} to
  $\nu$, we find some $a_1, \ldots, a_n \in M^{|x|}$ such that for all $b \in M$,
  $|\nu(\phi(x,b)) - \Av(a_1, \ldots, a_n; \phi(x,b))| < \varepsilon/2$. Then
  $$|\mu(\phi(x,b)) - \Av(a_1, \ldots, a_n; \phi(x,b))| < \varepsilon$$ for all $b \in M^{|y|}$, as
  wanted.
\end{proof}

\begin{rem}\label{VCforSetsOfFormulas} Encoding several formulas into one we can replace a single
  formula $\phi(x,y)$ in Proposition~\ref{UniformEpsilonNetGenStable} by a finite set of formulas
  $\Delta(x,y)$.
\end{rem}

Recall that a Keisler measure $\mu$ on $M^n$ is called \emph{smooth} if there is a unique global
Keisler measure extending it.  The following equivalence can be used to avoid a reference to global
measures in the definition of smoothness.

\begin{fact}[{\cite[Section 2]{NIP3}}]\label{non-uniform epsilon-net for smooth} A Keisler measure
  $\mu$ on $M^n$ is smooth if and only if the
  following holds.\\
  For any formula $\phi(x,y)$ with $|x|=n$ and $\varepsilon > 0$ there are some formulas
  $\theta_i^1(x), \theta_i^2(x)$ and $\psi_i(y)$ with parameters from $M$, for $i=1, \ldots, m$,
  such that:
  \begin{enumerate}
  \item the sets $\psi_i(M^{|y|})$ partition $M^{|y|}$,
  \item for all $i$ and $b \in M^{|y|}$, if $\CM\models \psi_i(b)$, then
$$\CM\models (\theta^1_i(x) \rightarrow \phi(x,b)) \,\&\, (\phi(x,b)\rightarrow \theta_i^2(x)),$$
\item for each $i$, $\mu(\theta^2_i(x)) - \mu(\theta^1_i(x)) < \varepsilon$.
\end{enumerate}

\end{fact}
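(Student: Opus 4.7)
The plan is to factor the equivalence through an intermediate ``pointwise'' form of the approximation condition: for every $\phi(x,y)$, every $\varepsilon>0$, and every $b$ in any elementary extension $\mathcal{N}\succeq \CM$, there exist $M$-definable $\theta^1(x), \theta^2(x)$ with $\theta^1(x)\to \phi(x,b)\to \theta^2(x)$ in $\mathcal{N}$ and $\mu(\theta^2)-\mu(\theta^1)<\varepsilon$. Writing $\psi_{\theta^1,\theta^2}(y):=\forall x(\theta^1(x)\to \phi(x,y))\wedge \forall x(\phi(x,y)\to\theta^2(x))$, the pointwise form says that the $M$-definable sets $\psi_{\theta^1,\theta^2}(\mathcal{N}^{|y|})$ cover $\mathcal{N}^{|y|}$ as the pair ranges over $M$-definable pairs with $\mu(\theta^2)-\mu(\theta^1)<\varepsilon$.

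First I would dispatch the easy direction: the uniform partition form implies smoothness. Indeed, given any two global extensions $\mu_1, \mu_2$ of $\mu$ and any $\phi(x,b)$ with $b$ in the universal domain, $b$ satisfies some $\psi_i$ (by elementarity, the partition transfers), so both $\mu_1(\phi(x,b))$ and $\mu_2(\phi(x,b))$ lie in $[\mu(\theta_i^1), \mu(\theta_i^2)]$ and hence differ by at most $\varepsilon$; letting $\varepsilon\to 0$ gives uniqueness of the extension. For the converse (smoothness $\Rightarrow$ uniform partition) I would first establish the pointwise form. Fix $\phi, \varepsilon$ and $b^*$ in a sufficiently saturated extension, and set
\[
r_-:=\sup\{\mu(\theta^1):\theta^1\ \text{$M$-definable},\ \theta^1\to \phi(x,b^*)\},\quad r_+:=\inf\{\mu(\theta^2):\phi(x,b^*)\to \theta^2\}.
\]
The key step is a Hahn--Banach construction: viewing $\mu$ as a positive linear functional on the space of finite $\mathbb{R}$-linear combinations of indicators of $M$-definable sets, for each $r\in[r_-, r_+]$ one produces a positive linear extension assigning value $r$ to $1_{\phi(\cdot,b^*)}$, and then extends it further to a global Keisler measure. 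Since smoothness prohibits two distinct global extensions, this forces $r_+=r_-$; picking $\theta^1, \theta^2$ with $\mu$-values sufficiently close to this common value yields the pointwise approximation.

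Finally, to upgrade the pointwise form to a uniform partition, use a standard compactness argument: the partial type $\{\neg \psi_{\theta^1,\theta^2}(y):\mu(\theta^2)-\mu(\theta^1)<\varepsilon\}$ over $\CM$ is inconsistent---else a realization in some $\mathcal{N}\succeq\CM$ would witness a failure of the pointwise form---so by compactness of first-order logic finitely many instances $\psi_{\theta_1^1,\theta_1^2},\ldots,\psi_{\theta_m^1,\theta_m^2}$ already cover $M^{|y|}$. Disjointifying (set $\psi_i(y):= \psi_{\theta_i^1,\theta_i^2}(y)\wedge\bigwedge_{j<i}\neg \psi_{\theta_j^1,\theta_j^2}(y)$) yields a partition satisfying (1)--(3).

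The hardest part will be the Hahn--Banach step: one must verify that $r_{\pm}$ are exactly the extremal values of $\mu'(\phi(\cdot,b^*))$ as $\mu'$ ranges over positive linear extensions of $\mu$, and that each such positive extension really does correspond to a finitely additive Keisler measure on the universal domain (so that two distinct extensions of $\mu$ are produced when $r_+>r_-$). Once this is granted, the direction uniform $\Rightarrow$ smooth and the compactness upgrade are routine.
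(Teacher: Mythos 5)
The paper does not prove this statement --- it is quoted as a Fact with a citation to \cite{NIP3} --- and your argument is correct and essentially reproduces the standard proof from that source: smoothness forces the interval $[r_-,r_+]$ of possible values of $\mu'(\phi(x,b^*))$, as $\mu'$ ranges over extensions of $\mu$, to collapse to a point (via the Hahn--Banach one-step extension you describe, every value in $[r_-,r_+]$ being attained by some global extension), and compactness then turns the pointwise statement into the finite definable partition, while the converse is exactly the routine sandwiching argument you give. The one step you flag as delicate --- that the attainable values are precisely $[r_-,r_+]$ and that any positive linear extension prolongs to a global Keisler measure --- is indeed the crux, and it goes through by the standard sublinear-functional computation, so I see no gap.
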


Every smooth measure is generically stable, and there are generically stable measures which are not
smooth (though every Keisler measure in an NIP theory can be extended to a smooth one, but over a
larger set of parameters). However, we have the following characterization from \cite{Distal}.
\begin{fact}\label{fac: distal iff gen stable is smooth} Let $T$ be NIP. Then the following are
  equivalent:
  \begin{enumerate}
  \item $T$ is distal.
  \item For any model $\CM$ of $T$, any generically stable measure on $M^n$ is smooth.
  \end{enumerate}
\end{fact}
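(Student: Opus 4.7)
The statement is a biconditional within NIP, so I would address the two directions separately.

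For $(2)\Rightarrow(1)$, I would argue by contraposition. Suppose $T$ is not distal. By the indiscernible-sequence definition of distality (Fact~\ref{fac: distality indiscernibles}), equivalently the failure of Fact~\ref{StrongHonestDef}(2), there is a formula $\phi(x,y)$, a model $\CM\models T$, an indiscernible sequence $(b_i)_{i<\omega}$ in $M^{|y|}$, and a point $a\in M^{|x|}$ with no ``local'' honest definition. Taking the averaging Keisler measure of this sequence yields, by NIP, a measure on $M^{|y|}$ that is finitely satisfiable in and definable over a countable model, hence generically stable. The non-distal witness exhibits two distinct invariant global extensions (the two ``sides'' of the cut induced by $a$), so this measure is not smooth.

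For $(1)\Rightarrow(2)$, let $T$ be distal and $\mu$ a generically stable measure on $M^n$. I would verify the criterion in Fact~\ref{non-uniform epsilon-net for smooth}. Fix $\phi(x,y)$ and $\varepsilon>0$. First, invoke Fact~\ref{StrongHonestDef}(2) twice: for $\phi(x,y)$ itself, yielding $\psi(x,y_1,\ldots,y_n)$ whose $x$-cells refine $\phi$-types over any finite $B\subseteq M^{|y|}$; and for the flip $\phi^*(y,x):=\phi(x,y)$, yielding $\psi^*(y,x_1,\ldots,x_k)$ with the symmetric property. Second, apply Proposition~\ref{UniformEpsilonNetGenStable} (together with Remark~\ref{VCforSetsOfFormulas}, handling $\phi$ and the combination $\phi(x,y_1)\leftrightarrow\phi(x,y_2)$ simultaneously) to obtain $A=\{a_1,\ldots,a_N\}\subseteq M^{|x|}$ that $(\varepsilon/3)$-approximates $\mu$ on those formulas.

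The $\psi^*$-cells parameterized by $k$-tuples from $A$ cover $M^{|y|}$; refining them yields a finite partition $\psi_1(y),\ldots,\psi_m(y)$ of $M^{|y|}$. By the defining property of $\psi^*$, for any $b,b'$ in the same piece $\psi_i$ we have $\phi(a_j,b)\leftrightarrow\phi(a_j,b')$ for every $a_j\in A$; combined with the $\varepsilon$-net this gives $\mu(\phi(x,b)\triangle\phi(x,b'))<2\varepsilon/3$ uniformly on each piece. Finally, for each $\psi_i$ I would build the required $\theta^1_i(x),\theta^2_i(x)$ as unions and intersections of $\psi$-cells from the direct distality side, indexed by tuples of representatives of $\psi_i$: each such $\psi$-cell is either entirely contained in or entirely disjoint from $\phi(x,b)$ for these representatives, and the almost-constancy on $\psi_i$ propagates the containment to every $b\in\psi_i$, while the $\varepsilon$-net controls $\mu(\theta^2_i\setminus\theta^1_i)<\varepsilon$.

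The main obstacle is this last step: turning the in-measure almost-constancy of $\phi(x,b)$ on each piece $\psi_i$ into the \emph{exact} definable sandwich required by smoothness. Both applications of Fact~\ref{StrongHonestDef} are essential --- one to partition the parameter space $M^{|y|}$ and one to produce the sandwich formulas on $M^{|x|}$ --- and it is the interplay of these two one-sided decompositions with the uniform $\varepsilon$-net from generic stability that forces smoothness. Without distality only a measure-theoretic approximation survives, which is strictly weaker than smoothness.
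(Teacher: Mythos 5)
This statement is quoted in the paper as a fact from Simon's paper \emph{Distal and non-distal NIP theories} (the reference [Distal]); the paper itself gives no proof, so your proposal can only be measured against the cited source and on its own merits. Your direction $(2)\Rightarrow(1)$ is essentially the standard argument: the average measure of a non-distal indiscernible sequence is generically stable but not smooth. Two small caveats: the sequence must be indexed by a dense complete order such as $[0,1]$ (not $\omega$) for the average measure to be defined, and the assertion that the failure of distality at a cut produces two distinct global extensions is exactly the content of the cited Proposition 2.21 of [Distal] (quoted later in the paper as Fact~\ref{AverageMeasures}(3)); as written you gesture at it rather than prove it, but the route is the right one.

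The direction $(1)\Rightarrow(2)$, however, has a genuine gap, and it sits exactly at the step you yourself flag as ``the main obstacle.'' Your construction (an $\varepsilon$-net $A$ for $\mu$, the flipped strong honest definition $\psi^*$ partitioning $M^{|y|}$ into pieces on which the trace of $\phi(x,\cdot)$ on $A$ is constant) only yields that for $b,b'$ in the same piece the sets $\phi(x,b)$ and $\phi(x,b')$ have small $\mu$-measure symmetric difference. The smoothness criterion of Fact~\ref{non-uniform epsilon-net for smooth} demands an \emph{exact} sandwich $\theta^1_i(x)\rightarrow\phi(x,b)\rightarrow\theta^2_i(x)$ for \emph{every} $b$ in the piece, and your claim that ``the almost-constancy on $\psi_i$ propagates the containment to every $b\in\psi_i$'' is false: a $\psi$-cell built over finitely many representatives of $\psi_i$ is contained in (or disjoint from) $\phi(x,b)$ only for those representatives, and $\mu(\phi(x,b)\triangle\phi(x,b_{\mathrm{rep}}))<\varepsilon$ does not transfer that containment to other $b$ in the piece --- small symmetric difference in measure never forces exact inclusion of a definable set. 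Pinning down $\nu(\phi(x,b))$ for an arbitrary global extension $\nu$ requires precisely this exact sandwich, so what you obtain is only the measure-theoretic approximation that, as you note yourself, is strictly weaker than smoothness. The proof in the cited source does not go through the finitary strong honest definitions at all (those came later, in [ExtDefII]); it works with indiscernible sequences, showing that a sequence is distal if and only if its average measure is smooth and that generically stable measures are controlled by such averages, which is how the exact inner and outer definable approximations are produced. As it stands, your argument establishes the easy direction modulo quotable facts, but not the hard one.
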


\begin{rem}\label{rem: localizing generic stability} 
  Let $\mu$ be a Keisler measure on $M^n$ and $A\subseteq M^n$ a definable subset with
  $\mu(A)>0$. Then we can localize $\mu$ to $A$ by defining $\mu_A(X)=\mu(A\cap X)/\mu(A)$.  Clearly
  $\mu_A$ is a Keisler measure on $M^n$ and $\mu_A$ is generically stable (smooth) provided $\mu$
  is.
\end{rem}

Let $\CM$ be a structure, $\mu_1$ a Keisler measure on $M^m$ and $\mu_2$ a Keisler measure on
$M^n$. A Keisler measure $\mu$ on $M^{m+n}$ is called \emph{a product measure of $\mu_1$ and
  $\mu_2$} if for any definable subsets $X\subseteq M^m$, $Y\subseteq M^n$ we have
$\mu(X\times Y)=\mu_1(X)\mu_2(Y)$.  We can extend this notion to finitely many Keisler measure
$\mu_i$ on $M^{|n_i|}$ in an obvious way.  A product Keisler measure always exists but in general is
not unique.  However, for smooth measures we have the following proposition that follows from
\cite[Corollary 2.5]{NIP3}.
\begin{prop}\label{prop: unique smooth amalgam} Let $\CM$ be a
  structure, $\mu_1$ a smooth Keisler measure on $M^m$ and $\mu_2$ a smooth Keisler measure on $M^n$.  Then there
  is a unique product measure of $\mu_1$ and $\mu_2$ and this measure is also smooth.
\end{prop}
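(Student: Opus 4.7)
The plan is to establish both parts of the proposition (uniqueness of the product and smoothness of the product) by using the sandwich characterization of smoothness (Fact~\ref{non-uniform epsilon-net for smooth}) applied to the marginals.

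For \emph{uniqueness}, let $\mu$ and $\mu'$ be two product measures of $\mu_1$ and $\mu_2$, fix an arbitrary definable set $C = \phi(M^m, M^n, c) \subseteq M^{m+n}$ with $c \in M^{|z|}$, and fix $\varepsilon > 0$. I would apply smoothness of $\mu_2$ to $\phi(x,y,z)$ treated as a formula in $y$ with parameter variables $(x,z)$: this yields $M$-formulas $\eta^1_i(y), \eta^2_i(y), \xi_i(x,z)$ for $i \leq N$ such that the $\xi_i$ partition $M^{m+|z|}$, we have $\xi_i(a,c) \Rightarrow (\eta^1_i(y) \subseteq \phi(a,y,c) \subseteq \eta^2_i(y))$, and $\mu_2(\eta^2_i) - \mu_2(\eta^1_i) < \varepsilon$. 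Specializing to $z=c$ gives the sandwich
\[ \bigsqcup_i \xi_i(x,c) \wedge \eta^1_i(y) \;\subseteq\; C \;\subseteq\; \bigsqcup_i \xi_i(x,c) \wedge \eta^2_i(y), \]
in which each side is a disjoint union of rectangles. By the product property, both $\mu(C)$ and $\mu'(C)$ lie in the interval $\bigl[\sum_i \mu_1(\xi_i(\cdot,c))\mu_2(\eta^1_i),\ \sum_i \mu_1(\xi_i(\cdot,c))\mu_2(\eta^2_i)\bigr]$, of length at most $\varepsilon \sum_i \mu_1(\xi_i(\cdot,c)) = \varepsilon$. Since $\varepsilon$ is arbitrary, $\mu(C) = \mu'(C)$.

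For \emph{smoothness}, I would show that any Keisler measure $\nu$ on $\mathcal{N}^{m+n}$ extending the (now uniquely defined) product $\mu$, with $\mathcal{N} \succeq \CM$, is already determined by $\mu_1$ and $\mu_2$. For an arbitrary $\mathcal{N}$-definable set $D = \Phi(\mathcal{N}^m, \mathcal{N}^n, d)$ with $d \in \mathcal{N}^{|z|}$, I repeat the sandwich construction above---applied to $\Phi(x,y,z)$ over $\CM$ and transferred to $\mathcal{N}$ by elementarity---to sandwich $D$ between finite disjoint unions of mixed rectangles $\xi_i(\cdot,d) \times \eta^{\bullet}_i(\mathcal{N}^n)$, where $\xi_i(\cdot,d)$ is $\mathcal{N}$-definable but $\eta^{\bullet}_i$ is $\CM$-definable. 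It then suffices to determine $\nu$ on each such mixed rectangle. For this I apply smoothness of $\mu_1$ to $\xi_i(x,z)$ treated as a formula in $x$ with parameters $z$ and transfer the resulting sandwich to $\mathcal{N}$, obtaining $\CM$-definable $\alpha^1 \subseteq \xi_i(\cdot,d) \subseteq \alpha^2$ with $\mu_1(\alpha^2) - \mu_1(\alpha^1) < \varepsilon$. The bounding rectangles $\alpha^{\bullet} \times \eta^{\bullet}_i$ are $\CM$-definable, so $\nu$ agrees with $\mu$ on them, forcing $\nu(\xi_i(\cdot,d) \times \eta^{\bullet}_i(\mathcal{N}^n))$ to lie in an interval of width at most $\varepsilon \cdot \mu_2(\eta^{\bullet}_i) \leq \varepsilon$. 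Summing over $i$ and letting $\varepsilon \to 0$ pins down $\nu(D)$.

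The main technical hurdle is the bookkeeping in the smoothness step: two nested applications of Fact~\ref{non-uniform epsilon-net for smooth}, combined with elementarity, must be arranged so that one side of each mixed rectangle is eventually controlled by a $\CM$-definable sandwich (where $\nu$ is forced to agree with $\mu = \mu_1 \otimes \mu_2$), while keeping the accumulated error $O(\varepsilon)$ rather than scaling with the number of pieces. The book-keeping is straightforward only because the partition weights $\mu_1(\xi_i(\cdot,c))$ and $\mu_2(\eta^{\bullet}_i)$ sum to $1$, so the two layers of approximation combine additively.
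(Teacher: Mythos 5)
Your uniqueness half is correct, and since the paper itself offers no proof of this proposition (it simply quotes \cite[Corollary 2.5]{NIP3}), a self-contained argument via Fact~\ref{non-uniform epsilon-net for smooth} is a legitimately different, more explicit route. Sandwiching an arbitrary definable $C=\phi(M^m,M^n,c)$ between finite disjoint unions of $M$-definable rectangles, with the partition living on the $(x,z)$-side, does pin every product measure into an interval of length at most $\varepsilon\sum_i\mu_1(\xi_i(\cdot,c))=\varepsilon$, exactly as you say; here the weights genuinely sum to $1$ because the $\xi_i(\cdot,c)$ partition $M^m$.

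The smoothness half, however, has a genuine gap in the error bookkeeping. After the first application of Fact~\ref{non-uniform epsilon-net for smooth} you have $N=N(\varepsilon)$ pieces; pinning each mixed rectangle $\xi_i(\cdot,d)\times\eta^\bullet_i$ into an interval of width at most $\varepsilon$ and ``summing over $i$'' only bounds the total uncertainty in $\nu(D)$ by roughly $N\varepsilon$, which need not tend to $0$ as $\varepsilon\to 0$ since $N$ grows. Your stated reason that the two layers combine additively --- that ``the partition weights $\mu_1(\xi_i(\cdot,c))$ and $\mu_2(\eta^\bullet_i)$ sum to $1$'' --- fails for the second family: the $\eta^\bullet_i$ are sandwiching sets, not a partition, and each $\mu_2(\eta^\bullet_i)$ can be close to $1$ (if $D=N^m\times Y$ with $\mu_2(Y)=\tfrac12$, all the $\eta$'s have measure about $\tfrac12$ and their sum is about $N/2$). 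Concretely, the gap between your outer bounds is
\[
\sum_i \mu_1(\alpha^2_i)\bigl(\mu_2(\eta^2_i)-\mu_2(\eta^1_i)\bigr)\;+\;\sum_i \bigl(\mu_1(\alpha^2_i)-\mu_1(\alpha^1_i)\bigr)\mu_2(\eta^1_i).
\]
The first sum is indeed $O(\varepsilon)$, because the $\alpha^1_i$ are pairwise disjoint (each is contained in $\xi_i(\cdot,d)$ and these are disjoint), so $\sum_i\mu_1(\alpha^1_i)\le 1$; but the second sum is only bounded by $N$ times the accuracy of the second sandwich, and with your choice of the same $\varepsilon$ in both layers this is $N\varepsilon$. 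The repair is standard: run the second application (smoothness of $\mu_1$ applied to $\xi_i(x,z)$) with accuracy $\varepsilon/N$, chosen after $N$ is known from the first layer; then the total gap is $O(\varepsilon)$, any two extensions of $\mu_1\otimes\mu_2$ agree on $D$, and your argument goes through.
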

In the case of smooth measures $\mu_1$ and $\mu_2$ we will denote their unique product measure as
$\mu=\mu_1\otimes \mu_2$.

\medskip

Let $x_1,\dotsc, x_n$ be pairwise disjoint tuples of variables, and $\mu$ a Keisler measure on
$M^{|x_1|}\times \dotsb\times M^{|x_n|}$. Then for each $i=1,\dotsc,n$, $\mu$ induces a Keisler
measure $\mu_i$ on $M^{|x_i|}$ by
\[ \mu_i(Y)=\mu(M^{|x_1|}\times \dotsb\times M^{|x_{i-1}|}\times Y \times M^{|x_{i+1}|}\times \dotsb
\times M^{|x_n|}), \]
and we will denote this $\mu_i$ by $\mu |_{x_i}$. It is easy to see that if $\mu$ is generically
stable (smooth) then every $\mu|_{x_i}$ is
also generically stable (smooth). \\
Also in this case we will call a Keisler measure $\mu$ on $M^{|x_1|}\times \dotsb\times M^{|x_n|}$
\emph{ a product measure} if $\mu$ is a product of $\mu|_{x_1},\dotsc, \mu|_{x_n}$.

\medskip Finally, we give some examples of smooth Keisler measures.

\begin{fact}\label{ExamplesOfSmoothMeasures}
  \begin{enumerate}
  \item Any Keisler measure concentrated on a finite set (as Fact~\ref{GenStable}(2) is clearly
    satisfied).
  \item Let $\lambda_n$ be the Lebesgue measure on the unite cube $[0,1]^n$ in $\RR^n$.  Let
    $\mathcal M$ be an o-minimal structure expanding the field of real numbers.  If
    $X \subseteq \RR^n$ is definable in $\mathcal M$, then, by o-minimal cell decomposition,
    $X\cap [0,1]^n$ is Lebesgue measurable, hence $\lambda_n$ induces a Keisler measure on $M^n$.
    This measure is smooth by \cite[Section 6]{NIP3}.
  \item Similarly to (2), for every prime $p$ a (normalized) Haar measure on a compact ball in
    $\mathbb{Q}_p$ induces a smooth Keisler measure on $\QQ_p^n$ (see \cite[Section 6]{NIP3}).
  \item Any definable, definably compact group $G$ in an o-minimal structure or over the $p$-adics
    admits a unique $G$-invariant generically stable measure \cite{NIP2, DefAmenableNIP}, which is
    then smooth by distality and Fact~\ref{fac: distal iff gen stable is smooth}.
  \end{enumerate}
\end{fact}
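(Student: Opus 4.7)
The plan is to verify each of the four parts separately; (2)--(4) are largely citations, so the main content is to explain what needs to be checked.

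For (1), let $\mu$ be concentrated on a finite set $F=\{a_1,\dotsc,a_m\}$ with $\mu(\{a_i\})=w_i$. I would first verify generic stability via Fact~\ref{GenStable}(2): given $\varepsilon>0$ and a formula $\phi(x,y)$, choose an integer $N$ with $1/N<\varepsilon$ and form the multiset consisting of each $a_i$ repeated $\lfloor N w_i\rfloor$ times (padding arbitrarily to reach length $N$). For any $b$, the averaged value of the indicator of $\phi(x,b)$ on this multiset matches $\mu(\phi(x,b))$ up to error bounded by the rounding, giving a uniform $\varepsilon$-approximation. For smoothness I would invoke Fact~\ref{non-uniform epsilon-net for smooth}: partition $M^{|y|}$ into the finitely many classes $\psi_S(y)\equiv\bigwedge_{i\in S}\phi(a_i,y)\wedge\bigwedge_{i\notin S}\neg\phi(a_i,y)$ for $S\subseteq\{1,\dotsc,m\}$, and set $\theta_S^1(x)=\theta_S^2(x)=\bigvee_{i\in S}(x=a_i)$. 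Then on each class $\phi(x,b)$ agrees with $\theta_S^1$ on $F$, so the measures of $\theta_S^2$ and $\theta_S^1$ coincide (both equal $\sum_{i\in S}w_i$), which is strictly stronger than the $\varepsilon$-gap condition.

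For (2), o-minimal cell decomposition with parameters gives, for every formula $\phi(x,y)$, a definable family of partitions of $[0,1]^n$ into cells such that on each cell the truth value of $\phi(x,b)$ is constant (as a function of $b$ in a corresponding parameter cell). Each cell is a Borel subset of $[0,1]^n$ and hence $\lambda_n$-measurable, so $\lambda_n$ restricts to a Keisler measure. To verify Fact~\ref{non-uniform epsilon-net for smooth}, I would partition parameter space by the combinatorial type of the induced cell decomposition and, for each type, approximate the measure of $\phi(x,b)\cap[0,1]^n$ from inside and outside by finite unions of closed/open cells, using the fact that the boundary of a cell has lower dimension and hence Lebesgue measure zero to make the inner and outer approximations differ by less than $\varepsilon$. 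This is exactly the argument sketched in \cite[Section 6]{NIP3}.

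For (3), the proof is formally parallel to (2): $p$-adic cell decomposition (Denef, extended to $p$-minimal theories with Skolem functions) provides uniformly definable partitions into cells, each of which is Haar measurable, and boundaries have Haar measure zero. Inner/outer approximation by finite unions of cells gives the smoothness condition of Fact~\ref{non-uniform epsilon-net for smooth}. For (4), existence and uniqueness of a $G$-invariant generically stable (in fact, Keisler) measure on a definably compact group in o-minimal and $p$-adic settings is established in \cite{NIP2,DefAmenableNIP}; once one has generic stability and the ambient theory is distal (as o-minimal and $p$-minimal-with-Skolem theories are), smoothness is automatic by Fact~\ref{fac: distal iff gen stable is smooth}.

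The main obstacle across (2)--(4) is genuinely geometric: controlling the measure of boundaries of cells to produce the sandwiching $\theta^1_i,\theta^2_i$, which is where one uses that cells of lower dimension are Lebesgue/Haar null. For (1) the argument is essentially combinatorial and poses no real difficulty.
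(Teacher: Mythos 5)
Your handling of (2)--(4) matches the paper, which offers nothing beyond the citations: measurability of definable sets via cell decomposition and smoothness of Lebesgue/Haar measure from \cite[Section 6]{NIP3} for (2)--(3), and for (4) existence/uniqueness from \cite{NIP2, DefAmenableNIP} together with Fact~\ref{fac: distal iff gen stable is smooth}. For (1) the paper only records that Fact~\ref{GenStable}(2) is clearly satisfied, and your multiset approximation is a correct way to see this (one quantitative nitpick: padding the multiset to length $N$ costs an error of order $m/N$, where $m$ is the size of the support, so $N$ must be large relative to $m/\varepsilon$, not just to $1/\varepsilon$).

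There is, however, a step in your smoothness verification for (1) that fails as written. With $\theta_S^1(x)=\theta_S^2(x)=\bigvee_{i\in S}(x=a_i)$, condition (2) of Fact~\ref{non-uniform epsilon-net for smooth} requires $\phi(x,b)\rightarrow\theta_S^2(x)$ for every $b\models\psi_S$, i.e.\ that every realization of $\phi(x,b)$ lies in the finite support --- false in general (take $\phi(x,b)$ defining all of $M^{|x|}$). The repair is to take $\theta_S^2(x)=\bigwedge_{i\notin S}\neg(x=a_i)$: for $b\models\psi_S$ one has $\neg\phi(a_i,b)$ for $i\notin S$, so $\phi(x,b)\rightarrow\theta_S^2(x)$ holds, while $\mu(\theta_S^2)=1-\sum_{i\notin S}w_i=\sum_{i\in S}w_i=\mu(\theta_S^1)$, giving the required gap. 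Even more directly, smoothness of a finitely supported measure follows from the definition without invoking Fact~\ref{non-uniform epsilon-net for smooth}: any global extension $\nu$ must assign $\nu(\{a_i\})=w_i$ to each $M$-definable singleton, and since $\sum_i w_i=1$, for every definable $X$ in the monster $\nu(X)$ is forced to equal $\sum_{a_i\in X}w_i$, so the extension is unique.
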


\section{Strong Erd\H{o}s-Hajnal for definable bi-partite graphs in distal theories}\label{sec:
  basicRamsey}

In this section we prove the key result of this paper.

\begin{thm}\label{thm:delta-main} Let $\CM$ be a model of a distal theory, and $R\subseteq M^n\times M^m$ a definable relation. 
  Then there is a constant $\delta=\delta(R)>0$ and a pair of formulas $\psi_1(x,z_1)$,
  $\psi_2(y,z_2)$ such that for any generically stable measures $\mu_1,\mu_2$ on $M^n$ and $M^m$
  respectively, there are $c_1,c_2$ from $\CM$ with $\mu_1( \psi_1(M,c_1))\geq \delta$,
  $\mu_2(\psi_2(M,c_2))\geq \delta$, and the pair $A=\psi_1(M,c_1)$, $B=\psi_2(M,c_2)$ is
  $R$-homogeneous.
\end{thm}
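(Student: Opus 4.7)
My plan is to combine the distal cell decomposition (Fact~\ref{StrongHonestDef}) in a single direction with the uniform $\varepsilon$-approximation proposition (Proposition~\ref{UniformEpsilonNetGenStable}), then pigeonhole on $\mu_2$ and extend the resulting ``good'' finite sample to a fixed definable set of large $\mu_1$-mass using an auxiliary formula built from $R$ and the cell decomposition formula.

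First, I apply Fact~\ref{StrongHonestDef} to $R(x,y)$ viewed as a formula with $y$ the variable and $x$ the parameter, obtaining a formula $\eta(y, z_1, \ldots, z_\ell)$ (with $|z_i| = n$) such that for any finite $A_0 \subseteq M^n$ of size $\geq 2$ and any $b \in M^m$ there exist $a_{i_1}, \ldots, a_{i_\ell} \in A_0$ with $b \in \eta(M, \bar a)$ and the cell $\eta(M, \bar a)$ is $R$-homogeneous over $A_0$ (i.e.\ for each $a \in A_0$ either $\eta(M, \bar a) \subseteq R(a, M)$ or $\eta(M, \bar a) \cap R(a, M) = \emptyset$). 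I then introduce the auxiliary formulas $\chi^+(x, \bar z) := \forall y\,(\eta(y, \bar z) \to R(x,y))$ and $\chi^-(x, \bar z) := \forall y\,(\eta(y, \bar z) \to \neg R(x,y))$, both of which are NIP since distal structures are NIP and NIP is preserved under Boolean operations and quantification.

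Fix $\varepsilon = 1/4$. By Proposition~\ref{UniformEpsilonNetGenStable} together with Remark~\ref{VCforSetsOfFormulas} applied to $\{\chi^+, \chi^-\}$, there is a constant $N = N(R)$ such that every generically stable measure $\mu_1$ on $M^n$ admits an $\varepsilon$-approximation $A_0 = \{a_1, \ldots, a_N\} \subseteq M^n$ uniform over both of these formulas. Applying the distal cell decomposition with parameter set $A_0$ covers $M^m$ by the at-most-$N^\ell$ cells $\{\eta(M, \bar a) : \bar a \in A_0^\ell\}$; pigeonholing $\mu_2$ over these cells produces $\bar a^* \in A_0^\ell$ such that $B := \eta(M, \bar a^*)$ satisfies $\mu_2(B) \geq N^{-\ell}$, and $B$ is $R$-homogeneous over $A_0$ by choice of $\eta$.

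The homogeneity of $B$ over $A_0$ partitions $A_0 = A_0^+ \sqcup A_0^-$ with $A_0^\pm = A_0 \cap \chi^\pm(M, \bar a^*)$. Without loss of generality $|A_0^+| \geq N/2$ (the symmetric case proceeds identically with $\chi^-$ in place of $\chi^+$, yielding the negative homogeneity $(A \times B) \cap R = \emptyset$). Set $A := \chi^+(M, \bar a^*) = \{a \in M^n : B \subseteq R(a, M)\}$; then $A \times B \subseteq R$ holds exactly by the definition of $\chi^+$, and the $\varepsilon$-approximation property applied to $\chi^+$ gives $\mu_1(A) > |A_0^+|/N - \varepsilon \geq 1/4$. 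Taking $\psi_2(y, \bar z) := \eta(y, \bar z)$ and letting $\psi_1(x, \bar z, z_0)$ encode both $\chi^+(x, \bar z)$ and $\chi^-(x, \bar z)$ via an auxiliary switching parameter $z_0 \in M$, and setting $\delta := \min(1/4, N^{-\ell})$, completes the argument. The main obstacle is that $\psi_1, \psi_2$ must be fixed formulas independent of the measures while the resulting $A, B$ must have large measure and exact (not merely approximate) $R$-homogeneity; the auxiliary formulas $\chi^\pm$ are designed precisely to bridge this gap, letting me pass from the finite homogeneous sample $A_0^+$ to a uniformly definable large-measure $A$ without losing any edges.
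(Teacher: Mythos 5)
Your proposal is correct and follows essentially the same route as the paper's proof (via Theorem~\ref{BasicPartitionGraphs}): a distal cell decomposition from Fact~\ref{StrongHonestDef}, the uniform $\varepsilon$-approximation of Proposition~\ref{UniformEpsilonNetGenStable} for the generically stable measure on the parameter side of the cells, a pigeonhole over the boundedly many cells complete over the sample (do state the pigeonhole over the \emph{complete} cells only, which still cover $M^m$ by the choice of $\eta$ --- an arbitrary tuple from $A_0^\ell$ need not give a homogeneous cell), and finally the definable set of all parameters relating uniformly to the heavy cell. The only cosmetic difference is that you lower-bound the measure of that last set by a majority vote over the $\varepsilon$-sample applied to $\chi^{\pm}$ (giving the fixed constant $1/4$), whereas the paper applies the approximation to the crossing sets $C^{\#}(M)$, obtaining measure arbitrarily close to $1/2$ on the generically stable side with an arbitrary Keisler measure on the other side, a slightly stronger asymmetric form that it reuses in the density argument.
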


As in \cite{AlonEtAl} the above theorem will follow from an asymmetric version (see Theorem
\ref{BasicPartitionGraphs} below).

Let $\CM$ be a distal structure and we fix a formula $\phi(x,y)$.  Let $\psi(x,y_1, \ldots, y_l)$ be
as given for $\phi(x,y)$ by Fact~\ref{StrongHonestDef}.

For $\vec d=(d_1,\dotsc,d_l)$ we will denote by $C_{\vec d}$ the subset of $M^{|x|}$ defined by
$\psi(x,\vec d)$ and call it \emph{a chamber}.  If in addition $\vec d\in B^l$ then we say that
$C_{\vec d}$ is a $B$-definable chamber.

\begin{defn}
  For a chamber $C=C_{\vec d}$ and $b\in M^{|y|}$ we say that $\phi(x,b)$ \emph{crosses $C$} if
  both $C\cap \phi(M,b)$ and $C\cap \neg\phi(M,b)$ are nonempty.
\end{defn}

For a chamber $C$ and a set $B$ we will denote by $C^\#(B)$ the set of all $b\in B$ such that
$\phi(x,b)$ crosses $C$.  Note that $C^\#(M)$ is a definable set (by a formula depending just on
the formula $\psi$ defining $C$).

\begin{defn}
  For $B\subseteq M^{|y|}$, a chamber $C$ is called \emph{$B$-complete} if $C$ is $B$-definable and
  $C^\#(B)=\emptyset$.
\end{defn}

It follows from the choice of $\psi$ that for every finite $B\subseteq M^{|y|}$ and $a\in M^{|x|}$
there is a $B$-complete chamber $C$ with $a\in C$. In particular, for any finite $B$ the union of
all $B$-complete chambers covers $M^{|x|}$.

\begin{defn}[$1/r$-cutting]\label{def:1r-cutting}
  Adopting a definition from \cite{ma}, we define $1/r$-cutting as follows.

  Let $\nu$ be a Keisler measure on $ M^{|y|}$. For a positive $r\in \RR$ we say that a family of
  chambers $\CF$ is a $1/r$-cutting with respect to $\nu$ if $M^{|x|}$ is covered by
  $\{ C \colon C\in \CF\}$ and for every $C\in \CF$ we have $\nu(C^\#(M)) \leq \frac{1}{r}$.
\end{defn}

The following claim is an analogue of a cutting lemma from \cite{ma} (see also Exercise 10.3.4(b)
there).

\begin{claim}\label{CrossCuttingExists}There is a constant $K$ such that the following holds. For
  any positive $r$ and for any generically stable measure $\nu$ on $M^{|y|}$ there is a finite set
  $S\subseteq M$ such that the family of all $S$-complete chambers is a $1/r$-cutting with respect
  to $\nu$, and the size of $S$ is bounded by $Kr^2\log 2r$.
\end{claim}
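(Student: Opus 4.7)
The plan is to extract $S$ as an $\varepsilon$-approximation (in the sense of the VC-theorem) for a suitable ``crossing'' relation between elements of $M^{|y|}$ and the parameter space of chambers, using the $\varepsilon$-approximation for generically stable measures from Proposition~\ref{UniformEpsilonNetGenStable}.

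First, I introduce the auxiliary formula
\[ \theta(y;\vec z) := \exists x_1\exists x_2 \bigl( \psi(x_1,\vec z)\wedge \psi(x_2,\vec z)\wedge \phi(x_1,y)\wedge \neg\phi(x_2,y)\bigr), \]
where $\vec z=(z_1,\dotsc,z_l)$ with $|z_i|=|y|$. For any chamber $C=C_{\vec d}$, the set $C^\#(M)$ is precisely $\theta(M,\vec d)$. Since every distal structure is NIP, the formula $\theta$ has some finite VC-dimension $k$ depending only on $\phi$ (and the fixed witnessing $\psi$).

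Next, I apply Proposition~\ref{UniformEpsilonNetGenStable} to $\theta(y;\vec z)$ and the measure $\nu$ with $\varepsilon = 1/(2r)$, obtaining points $a_1,\dotsc,a_n \in M^{|y|}$ with $n = O\!\left(k\,r^2 \log r\right)$ such that for every tuple $\vec d$,
\[ \bigl|\nu(C_{\vec d}^\#(M)) - \Av(a_1,\dotsc,a_n; C_{\vec d}^\#(M))\bigr| < \frac{1}{2r}. \]
Set $S := \{a_1,\dotsc,a_n\}$, padding with one arbitrary extra element if necessary so that $|S|\geq 2$. I claim that the family $\CF$ of all $S$-complete chambers is a $1/r$-cutting with respect to $\nu$. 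For the covering property: given any $a \in M^{|x|}$, Fact~\ref{StrongHonestDef}(2) applied with $B = S$ yields $\vec b \in S^l$ with $\CM \models \psi(a,\vec b)$ and $\psi(x,\vec b) \vdash \tp_\phi(a/S)$; the latter is exactly the assertion that $C_{\vec b}^\#(S) = \emptyset$, so $C_{\vec b} \in \CF$ and $a \in C_{\vec b}$. For the measure bound: if $C \in \CF$ then $C^\#(S) = \emptyset$ by definition, so $\Av(a_1,\dotsc,a_n; C^\#(M)) = 0$, and the approximation inequality above gives $\nu(C^\#(M)) < 1/(2r) \leq 1/r$. Absorbing $k$ and the padding term into a single constant yields the stated bound $|S| \leq K\, r^2 \log 2r$.

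The main subtlety, already resolved by Proposition~\ref{UniformEpsilonNetGenStable}, is that one needs the approximation to be \emph{uniform} over all chambers $C_{\vec d}$ simultaneously; this is exactly what allows the single finite set $S$ to witness the cutting property for the entire family of $S$-complete chambers at once. Everything else is a clean combination of the distal covering guarantee from Fact~\ref{StrongHonestDef}(2) with the VC-theoretic $\varepsilon$-approximation for generically stable measures.
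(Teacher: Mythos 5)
Your proof is correct and follows essentially the same route as the paper: both arguments view the crossing sets $C^{\#}(M)$ as a uniformly definable (hence, by NIP, bounded VC-dimension) family and apply Proposition~\ref{UniformEpsilonNetGenStable} to $\nu$ to extract the finite set $S$, with the covering by $S$-complete chambers coming from Fact~\ref{StrongHonestDef}(2). Your only deviations are cosmetic: you write out the crossing formula $\theta$ and re-derive the covering property (which the paper records just before the claim), and you use $\varepsilon = 1/(2r)$ instead of $1/r$, which only changes the constant $K$.
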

\begin{proof}
  Consider the family of sets
  \[\mathcal C=\{C^\#(M) \colon C \text{ is an $M$-definable chamber} \}. \]
  It is a definable family, hence has a bounded VC-dimension by NIP.  Applying 
  Proposition~\ref{UniformEpsilonNetGenStable} with $\varepsilon=1/r$ we obtain a subset 
  $S\subseteq M$ of size
  at most $K r^2 \log 2r$, where $K$ is a constant that depends only on the VC-dimension of
  $\mathcal C$, such that for every $M$-definable chamber $C$ if $\nu(C^\#(M)) > 1/r$ then
  $S\cap C^\#(M)\neq \emptyset$.

  Since $C^\#(S)=\emptyset$ for any $S$-complete cell $C$, we are done.
\end{proof}

The following theorem is an analogue of a result in \cite[Section 6]{AlonEtAl}.

\begin{thm}\label{BasicPartitionGraphs} Let $\CM$ be a distal structure and let $R(x,y)$ be a
  definable relation. Then for any $\beta \in (0,\frac{1}{2})$ there are some $\alpha \in (0,1)$ and
  formulas $\psi_1(x,z_1), \psi_2(y,z_2)$ depending just on $R$ and $\beta$ such that:

  for any Keisler measure $\mu$ on $M^{|x|}$ and any generically stable measure $\nu$ on $M^{|y|}$,
  there are some $c_1 \in M^{|z_1|}, c_2 \in M^{|z_2|}$ with $\mu(\psi_1(x,c_1)) > \alpha$,
  $\nu( \psi_2(y,c_2) ) > \beta$ and the pair of sets $\psi_1(M,c_1), \psi_2(M,c_2)$ is
  $R$-homogeneous.
\end{thm}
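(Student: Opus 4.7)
My plan is to follow the outline suggested by the machinery already built up in the section: apply the distal cell decomposition to $\phi(x,y) := R(x,y)$ to obtain chambers, use the distal cutting lemma (Claim~\ref{CrossCuttingExists}) to control how many $y$'s cross a chamber relative to $\nu$, and then extract one large chamber via $\mu$-pigeonhole.

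Concretely, fix $\beta\in(0,1/2)$ and pick $r$ large enough that $(1-1/r)/2>\beta$; this $r$ depends only on $\beta$. Let $\psi(x,y_1,\dots,y_l)$ be the distal-cell formula for $\phi$ coming from Fact~\ref{StrongHonestDef}. Apply Claim~\ref{CrossCuttingExists} to the generically stable measure $\nu$ and this $r$: we obtain a finite $S\subseteq M^{|y|}$ with $|S|\leq Kr^{2}\log 2r$ such that the family $\mathcal{C}_S$ of $S$-complete chambers covers $M^{|x|}$, and every $C\in\mathcal{C}_S$ satisfies $\nu(C^{\#}(M))\leq 1/r$. By NIP (Fact~\ref{fac: PolyTypesNIP}) applied to the formula $\psi(x,y_1,\dots,y_l)$, the number of distinct chambers with parameters from $S$ is bounded polynomially in $|S|$, hence by some constant $N_0=N_0(R,\beta)$ depending only on $R$ and $\beta$. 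Since $\mathcal{C}_S$ covers $M^{|x|}$ and $\mu$ is a probability measure, finite additivity yields some $C^{*}\in\mathcal{C}_S$ with $\mu(C^{*})\geq 1/N_0 =: \alpha$.

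For this $C^*$ the set of $b$'s that do \emph{not} cross $C^*$ splits into the two definable sets
$$B^{+}=\{b: C^{*}\subseteq\phi(M,b)\},\qquad B^{-}=\{b: C^{*}\cap\phi(M,b)=\emptyset\},$$
and $\nu(B^{+})+\nu(B^{-})\geq 1-\nu(C^{*\#}(M))\geq 1-1/r>2\beta$. So one of $B^{\pm}$ has $\nu$-measure $>\beta$; in either case the corresponding pair $(C^{*}, B^{\pm})$ is $R$-homogeneous by construction. Now set $\psi_{1}(x,z_{1}):=\psi(x,z_{1})$, and take $c_{1}$ to be the tuple of parameters defining $C^{*}$. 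For $\psi_{2}$, note that the two candidate sets $B^{\pm}$ are given uniformly by the formulas $\psi_{2}^{+}(y,z_{1}):=\forall x(\psi(x,z_{1})\rightarrow\phi(x,y))$ and $\psi_{2}^{-}(y,z_{1}):=\forall x(\psi(x,z_{1})\rightarrow\neg\phi(x,y))$; we fold these into a single formula $\psi_{2}(y,z_{2})$ by augmenting $z_{1}$ with an extra binary switch parameter selecting between $\psi_{2}^{+}$ and $\psi_{2}^{-}$. Picking $c_{2}$ to encode $c_{1}$ together with whichever sign makes the chosen $B^{\pm}$ have $\nu$-measure $>\beta$ finishes the proof.

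The step I expect to do the real work is the combination in Step~2--3: the distal cutting lemma is precisely what makes the size $|S|$ (and hence the chamber count $N_0$) depend only on $R$ and $\beta$ rather than on the ambient measures, which in turn is what keeps $\alpha$ uniform. Everything else is a pigeonhole plus bookkeeping to present $\psi_2$ as a single formula with parameters rather than as a disjunction of cases; this asymmetric statement then immediately yields the symmetric Theorem~\ref{thm:delta-main} by taking $\mu_{1}=\mu$ generically stable and setting $\delta:=\min(\alpha,\beta)$ for any fixed $\beta\in(0,1/2)$.
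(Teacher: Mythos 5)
Your proposal is correct and follows essentially the same route as the paper: apply Claim~\ref{CrossCuttingExists} to $\nu$ with $r$ chosen from $\beta$, bound the number of $S$-complete chambers by a constant depending only on $R$ and $\beta$, pigeonhole with respect to $\mu$ to get a large chamber $C^{*}$, split the non-crossing parameters into $B^{+}$ and $B^{-}$, and code the finitely many formula choices into single formulas $\psi_1,\psi_2$. Two cosmetic points: the chamber count needs no appeal to Fact~\ref{fac: PolyTypesNIP} (there are at most $|S|^{l}$ parameter tuples, which is the trivial bound the paper uses), and to obtain the strict inequality $\mu(\psi_1(x,c_1))>\alpha$ you should take $\alpha$ strictly below $1/N_0$, exactly as the paper takes $\alpha$ strictly below its chamber-count bound.
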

\begin{proof}

  Let $\phi(x,y)$ be a formula defining $R$.  Let $r$ be a positive real number that we will
  determine later.

  By Claim~\ref{CrossCuttingExists}, let $S \subseteq M^{|y|}$ be a set of size at most
  $Kr^2\log 2r$ such that for every $S$-complete chamber $C$, $\nu(C^{\#}(M)) \leq \frac{1}{r}$. It
  is not hard to see that there is a constant $K_1$ and a number $l = l(\psi) \in \mathbb{N}$ such the number of $S$-definable chambers is at
  most $K_1 |S|^l$.  Thus the number of $S$-complete chambers is at most $K' r^{2l}\log^l 2r$, where
  $K'$ is a constant.
 
  As the set of $S$-complete chambers covers $M^{|x|}$, there is an $S$-complete chamber $C_0$ with
  $\mu(C_0) \geq \frac{1}{K'r^{2l}\log^l (2r)}$.

  For the set $D = M^{|y|} \setminus C_0^{\#}(M)$, we have $\nu(D) \geq (1-\frac{1}{r})$ and for
  every $d\in D$, $\phi(x,d)$ does not cross $C_0$.  In particular all $a\in C_0$ have the same
  $\phi$-type over $D$. Note that $D$ is a disjoint union of
  $D_1 =\{ d\in D : C_0 \subseteq \phi(M,d) \}$ and
  $D_2 = \{ d\in D : C_0\cap \phi(M,d)=\emptyset \}$, and both $(C_0,D_1)$ and $(C_0,D_2)$ are
  $R$-homogeneous. Thus either $\nu(D_1) \geq \frac{1}{2} - \frac{1}{2r}$ or
  $\nu(D_2) \geq \frac{1}{2} - \frac{1}{2r}$.  Let $\psi_1 := \psi$ be the formula such that an instance of
  it defines $C_0$, and let $\psi_2$ be the formula such that an instance of it defines either $D_1$
  or $D_2$, depending on which one has large measure. By assumption there are only finitely many
  choices for both depending on the original data. So given $\beta \in (0,\frac{1}{2})$ we can find
  $r$ with $\frac{1}{2} - \frac{1}{2r} = \beta$, and take any positive
  $\alpha < \dfrac{1}{K'r^{2l}\log^l (2r)}$. Then, encoding finitely many choices for
  $\psi_1, \psi_2$ into one formula we can conclude the theorem. \end{proof}

\begin{proof}[Proof of theorem~\ref{thm:delta-main}]
  We can take any $\beta\in(0,\frac{1}{2})$ and let $\alpha$ be as in 
  Theorem~\ref{BasicPartitionGraphs}. Now take $\delta=\min \{ \alpha,\beta \}$.
\end{proof}

\begin{rem}
  We will see in Corollary~\ref{cor: density extra parameters} that one can allow an extra parameter
  in $R$ without affecting the uniform choice of $\psi_1, \psi_2$.
\end{rem}

\section{Density version and a generalization to hypergraphs}\label{sec: density distal Ramsey}

First we prove that Theorem~\ref{thm:delta-main} can be strengthened to a density version. It seems
that this implication is folklore, as it is mentioned in \cite[Corollary 7.1 and the remark
afterwards]{AlonEtAl} without definability of the homogeneous subsets and stated in
\cite{GromovEtAl}. However, the proofs in both places are very sketchy, so we give a complete proof
verifying definability of homogeneous subsets, and in addition working with Keisler measures. Our
argument is an elaboration on the proof of Theorem 3.3 in \cite{PachSolymosi}.

\begin{prop}\label{prop: DensityForGraphs} 
  Let $\CM$ be a distal structure and $R(x,y)$ a definable relation.
  Given $\alpha >0$ there is $\varepsilon >0$ such that for any Keisler measure $\mu$ on $M^{|x|}$,
  any generically stable measure $\nu$ on $M^{|y|}$, and a product measure $\omega$ of $\mu$ and
  $\nu$, if $\omega( R(x,y) ) \geq \alpha$ then there are uniformly definable (in terms of $\alpha$
  and $R$ only) $A_0 \subseteq M^{|x|}$ and $B_0 \subseteq M^{|y|}$ with
  $\mu(A_0) \geq \varepsilon$, $\nu(B_0) \geq \varepsilon $, and $A_0 \times B_0 \subseteq R$.
\end{prop}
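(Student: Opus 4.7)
The plan is to apply Theorem~\ref{thm:delta-main} iteratively, each time either discovering a complete $R$-homogeneous pair or restricting to a smaller definable sub-rectangle in which the relative density of $R$ has strictly increased, until the density is so high that an empty homogeneous pair simply cannot exist. Let $\delta=\delta(R)>0$ and $\psi_1(x,z_1),\psi_2(y,z_2)$ be as supplied by Theorem~\ref{thm:delta-main}, and set $X_0\times Y_0=M^{|x|}\times M^{|y|}$. At stage $i$, localize $\mu,\nu$ to $X_i,Y_i$; by Remark~\ref{rem: localizing generic stability} the localization of $\nu$ stays generically stable (the localization of $\mu$ is just another Keisler measure), and the product form of $\omega$ is preserved, so Theorem~\ref{thm:delta-main} applies to the localized data and yields a homogeneous pair $(A,B)\subseteq X_i\times Y_i$ with $\mu_i(A),\nu_i(B)\geq\delta$, defined by instances of $\psi_1,\psi_2$.

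If $A\times B\subseteq R$, output $(A_0,B_0):=(A,B)$ and stop. Otherwise $(A\times B)\cap R=\emptyset$, and the three remaining sub-rectangles $A\times(Y_i\setminus B)$, $(X_i\setminus A)\times B$, $(X_i\setminus A)\times(Y_i\setminus B)$ together carry all of the $R$-mass of $X_i\times Y_i$ while occupying relative area at most $1-\delta^2$. Averaging shows some such sub-rectangle has relative $R$-density at least $\alpha_i/(1-\delta^2)$, where $\alpha_i$ denotes the current density; take this to be $X_{i+1}\times Y_{i+1}$. Then $\alpha_{i+1}\geq\alpha_i/(1-\delta^2)>\alpha_i$. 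As soon as $\alpha_i>1-\delta^2$, any empty homogeneous pair of relative area $\geq\delta^2$ would have to lie inside the $R$-complement of $X_i\times Y_i$, whose relative measure is now $<\delta^2$---impossible---so the next application of Theorem~\ref{thm:delta-main} is forced into Case 1. Hence the iteration terminates in at most $N=\lceil\log(1/\alpha)/\log(1/(1-\delta^2))\rceil$ steps, and the output $A_0,B_0$ are Boolean combinations of $\leq N$ instances of $\psi_1,\psi_2$: a uniformly definable family depending only on $R$ and $\alpha$, as required.

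The main obstacle---and the place where the proof needs real care, following the Pach--Solymosi strategy~\cite{PachSolymosi}---is to obtain a positive lower bound $\varepsilon$ on the \emph{absolute} measures $\mu(A_0),\nu(B_0)$, not merely on their measures relative to $X_N\times Y_N$. A naive density-maximizing choice of sub-rectangle may shrink $X_i$ or $Y_i$ so fast that $\mu(X_N)$ or $\nu(Y_N)$ becomes negligible. To remedy this, in Case 2 I would instead pick the sub-rectangle maximizing the \emph{absolute} $R$-mass $\omega(R\cap(\,\cdot\,))$ among the three candidates; this gives $\omega(R\cap X_{i+1}\times Y_{i+1})\geq\omega(R\cap X_i\times Y_i)/3$ by an averaging argument, hence $\mu(X_N)\nu(Y_N)\geq\omega(R\cap X_N\times Y_N)\geq\alpha/3^N$, and since $\mu(X_N),\nu(Y_N)\leq 1$ each factor individually is at least $\alpha/3^N$. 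Combined with the final strong Erd\H{o}s--Hajnal pair of relative measures $\geq\delta$, this yields $\mu(A_0),\nu(B_0)\geq\delta\alpha/3^N$, so one sets $\varepsilon:=\delta\alpha/3^N$. The delicate verification is that this mass-maximizing rule is still compatible with termination within the same $O(N)$ steps: the max-mass sub-rectangle has relative density $\geq\alpha_i/3$ and relative area $\geq\alpha_i/(3\cdot\text{density}_{\max})\geq\alpha_i\cdot\delta^2/3$ in the relevant regime, so one must track the mass-potential and the density-potential simultaneously to confirm that the termination criterion $\alpha_i>1-\delta^2$ is still reached; this combined bookkeeping is the technical heart of the proof.
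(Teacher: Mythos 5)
Your overall strategy (iterate Theorem~\ref{thm:delta-main}, stop when the homogeneous pair lands inside $R$, otherwise pass to a sub-rectangle of higher density, and use the observation that density $>1-\delta^2$ forces a positive pair, i.e.\ Claim~\ref{claim:basic-step}) is exactly the paper's strategy, and you correctly locate the real difficulty: the density-maximizing choice of sub-rectangle controls termination but not the absolute measures $\mu(X_i),\nu(Y_i)$. However, your proposed remedy does not close this gap. If in the non-homogeneous case you pass to the sub-rectangle of maximal \emph{absolute} $R$-mass, you only get relative density at least $\alpha_i/3$ for the new rectangle: the big piece $(X_i\setminus A)\times(Y_i\setminus B)$ can carry a third (or more) of the mass while having relative area close to $1$, so the density can \emph{decrease} by a factor of roughly $3$ at each step. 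The termination criterion $\alpha_i>1-\delta^2$ need then never be reached, and the bound $N=\lceil\log(1/\alpha)/\log(1/(1-\delta^2))\rceil$ that you use to define $\varepsilon=\delta\alpha/3^N$ was derived for the density-maximizing rule, not the mass-maximizing one; you cannot apply it to the modified iteration. You acknowledge that one must ``track the mass-potential and the density-potential simultaneously,'' but that bookkeeping is precisely the missing content: as written, neither rule alone yields both a bounded number of steps and a lower bound on $\mu(X_N),\nu(Y_N)$, and no argument is given that a combined rule does.

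The paper resolves this tension differently (Claim~\ref{claim:ind}, following Pach--Solymosi). After producing the homogeneous pair $A_0\subseteq A$, $B_0\subseteq B$ with relative measures $\geq\delta$ and $(A_0\times B_0)\cap R=\emptyset$, one notes that $\frac{\mu(A_0)}{\mu(A)}\cdot\frac{\nu(B_0)}{\nu(B)}\leq 1-\alpha$, so without loss of generality $\mu(A_0)\leq\sqrt{1-\alpha}\,\mu(A)$, and then splits on a tuned threshold $d=d(\alpha,\delta)$: if $\nu(B_0)\leq d\,\nu(B)$, the three-rectangle averaging gives a piece of density $\geq\alpha/(1-\delta^2)$ all of whose sides have relative measure $\geq\min\{\delta,\,1-d,\,1-\sqrt{1-\alpha}\}$; if $\nu(B_0)>d\,\nu(B)$, one keeps \emph{all} of $B$ and passes to $(A\setminus A_0)\times B$, whose density is still $\geq\alpha/(1-\delta^2)$ because $A_0$ can only meet $R$ inside $A_0\times(B\setminus B_0)$, whose measure is small once $1-d$ is small. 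Either way the relative measure lost per step is bounded below by a constant $h(\alpha,\delta)>0$ while the density gains the fixed factor $1/(1-\delta^2)$, so boundedly many iterations suffice and the absolute measures of the final pair stay above some $\varepsilon(\alpha,R)>0$. Some version of this case analysis (or an equivalent device reconciling density gain with bounded per-step loss) is what your proof still needs.
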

We fix a distal structure $\CM$ and a definable relation $R(x,y)$.  By Theorem~\ref{thm:delta-main}
we know that there is a constant $\delta>0$ and formulas $\psi_1(x,z_1), \psi_2(y,z_2)$ such that
for any measure $\mu$ on $M^{|x|}$ and any generically stable measure $\nu$ on $M^{|y|}$ there are
some $A \subseteq M^{|x|}$ and $B\subseteq M^{|y|}$ definable by an instance of $\psi_1$ and
$\psi_2$ respectively, with $ \mu(A) \geq \delta$ and $\nu(B) \geq \delta$, such that either
$A \times B \subseteq R$ or $A\times B \cap R =\emptyset$.

Now we fix Keisler measures $\mu,\nu$ as in the proposition and let $\omega$ be a product Keisler
measure of $\mu, \nu$ on $M^{|x| + |y|}$.

For definable sets $A \subseteq M^{|x|},B \subseteq M^{|y|}$ we denote by $d(A,B)$ the density of
$R$ in $A\times B$, namely
\[
d(A,B)=\frac{ \omega( (A\times B ) \cap R )}{\mu(A)\nu(B)},
\]
and setting $d(A,B)=0$ if $\mu(A)\nu(B)=0$.  The following claim is a basic step.
\begin{claim}\label{claim:basic-step}
  Let $A\subseteq M^{|x|}$, $B\subseteq M^{|y|}$ be definable sets with $d(A,B) > 1-\delta^2$. Then
  there are subsets $A_1\subseteq A$, $B_1\subseteq B$ defined uniformly (in terms of $A$, $B$ and
  $R$) such that $\mu(A_1)\geq \delta \mu(A)$, $\nu(B_1)\geq \delta \nu(B)$ and
  $A_1\times B_1 \subseteq R$.
\end{claim}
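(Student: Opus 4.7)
The plan is to apply the asymmetric Theorem~\ref{BasicPartitionGraphs} to suitable \emph{localizations} of $\mu$ and $\nu$ and then use the density hypothesis to rule out the ``empty intersection'' branch of the $R$-homogeneous dichotomy, leaving the ``total containment'' branch.

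First, since $d(A,B) > 1 - \delta^2 > 0$, both $\mu(A)$ and $\nu(B)$ are strictly positive, so the localizations $\mu_A$ and $\nu_B$ (Remark~\ref{rem: localizing generic stability}) are well-defined Keisler measures, and $\nu_B$ inherits generic stability from $\nu$. I would then invoke Theorem~\ref{BasicPartitionGraphs} with the same $\beta \in (0, 1/2)$ (and matching $\alpha$) used to produce the constant $\delta$ in Theorem~\ref{thm:delta-main}, applied to $\mu_A$, $\nu_B$, and the relation $R$. This yields parameters $c_1, c_2$ and an $R$-homogeneous pair $\psi_1(M,c_1), \psi_2(M,c_2)$ whose $\mu_A$- and $\nu_B$-masses are at least $\delta$. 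Setting $A_1 := \psi_1(M,c_1) \cap A$ and $B_1 := \psi_2(M,c_2) \cap B$ produces definable subsets of $A, B$ with $\mu(A_1) \geq \delta \mu(A)$ and $\nu(B_1) \geq \delta \nu(B)$; since the formulas $\psi_1, \psi_2$ depend only on $R$ (via $\delta$ and $\beta$), the definitions of $A_1, B_1$ are uniform in $A$, $B$, and $R$, as required.

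The main — really the only nontrivial — point is to verify that under the density hypothesis we must land in the case $A_1 \times B_1 \subseteq R$ rather than $A_1 \times B_1 \cap R = \emptyset$. If the latter held, then since $\omega$ is a product measure of $\mu$ and $\nu$,
\[
\omega\bigl((A \times B) \cap R\bigr) \leq \omega\bigl((A \times B) \setminus (A_1 \times B_1)\bigr) = \mu(A)\nu(B) - \mu(A_1)\nu(B_1) \leq (1 - \delta^2)\,\mu(A)\nu(B),
\]
which would force $d(A,B) \leq 1 - \delta^2$, contradicting the hypothesis. This forced choice of branch is exactly what motivates the threshold $1 - \delta^2$ in the statement, and once it is in hand the claim follows by setting $A_1, B_1$ as above.
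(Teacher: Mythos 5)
Your proof is correct and follows essentially the same route as the paper: localize $\mu$ and $\nu$ to $A$ and $B$, apply the homogeneous-pair theorem to get $A_1, B_1$ of relative measure at least $\delta$, and use the product-measure computation $\omega((A\times B)\cap R)\leq \mu(A)\nu(B)-\mu(A_1)\nu(B_1)\leq(1-\delta^2)\mu(A)\nu(B)$ to exclude the empty branch. The only cosmetic difference is that you invoke Theorem~\ref{BasicPartitionGraphs} directly rather than Theorem~\ref{thm:delta-main}, which is how the paper packages the same statement.
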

\begin{proof}
  Using Remark~\ref{rem: localizing generic stability} we apply Theorem~\ref{thm:delta-main} to
  $\mu_A, \nu_B$ --- the localizations of $\mu$ on $A$ and $\nu$ on $B$, respectively. This gives us
  $A'\subseteq M^{|x|}$, $B'\subseteq M^{|y|}$ defined by instances of $\psi_1$ and $\psi_2$
  respectively, such that for the sets $A_1=A'\cap A$, $B_1=B'\cap B$ we have
  $\mu(A_1)\geq \delta \mu(A)$, $\nu(B_1)\geq \delta \nu(B)$ and either $A_1\times B_1 \subseteq R$
  or $\left( A_1\times B_1 \right) \cap R=\emptyset$.

  If $\left( A_1\times B_1 \right) \cap R=\emptyset$ then
  \[\omega((A\times B)\cap R)
  \leq \omega(A\times B)-\omega(A_1\times B_1)\leq (1-\delta^2)\mu(A)\nu(B), \]
  contradicting the assumption $d(A,B) > 1-\delta^2$.
\end{proof}

It is not hard to see that Proposition~\ref{prop: DensityForGraphs} follows from Claim
\ref{claim:basic-step} and the following claim by iterating sufficiently (but boundedly) many times
and taking the conjunction of the corresponding defining formulas.

\begin{claim}\label{claim:ind}
  For any $0< \alpha < 1 - \delta^2$ there is some $h >0$ such that for any definable $A$ and $B$ with
  $d(A,B)\geq \alpha$ there are uniformly definable (in terms of $R$, $\alpha$, $A$, $B$) subsets
  $A' \subseteq A$ $B'\subseteq B$ with $\mu(A') \geq h \mu(A)$, $\nu(B') \geq h \nu(B)$ and
  $d( A',B')\geq \alpha\dfrac{1}{1-\delta^2}$.
\end{claim}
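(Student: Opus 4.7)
The plan is to apply Theorem~\ref{thm:delta-main} to the localized measures $\mu_A, \nu_B$ on $A, B$ (which are still generically stable by Remark~\ref{rem: localizing generic stability}). This produces subsets $A_1 \subseteq A$, $B_1 \subseteq B$, uniformly defined by instances of the formulas $\psi_1, \psi_2$ from that theorem, with $a := \mu(A_1)/\mu(A) \geq \delta$ and $b := \nu(B_1)/\nu(B) \geq \delta$, and with the pair $(A_1, B_1)$ being $R$-homogeneous. If $A_1 \times B_1 \subseteq R$, I take $A' := A_1$, $B' := B_1$, giving $d(A', B') = 1 \geq \alpha/(1 - \delta^2)$ with both side-ratios $\geq \delta$.

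Otherwise $(A_1 \times B_1) \cap R = \emptyset$, which combined with $d(A, B) \geq \alpha$ forces $ab \leq 1 - \alpha$. I will decompose
\[ A \times B = (A_1 \times B_1) \sqcup X_1 \sqcup X_2, \quad X_1 := (A \setminus A_1) \times B, \quad X_2 := A_1 \times (B \setminus B_1). \]
Since $R$ is disjoint from $A_1 \times B_1$, we get $\omega(R \cap X_1) + \omega(R \cap X_2) \geq \alpha \mu(A)\nu(B)$, while $\omega(X_1) + \omega(X_2) = (1 - ab)\mu(A)\nu(B) \leq (1 - \delta^2)\mu(A)\nu(B)$. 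A weighted averaging argument then forces at least one of $d(X_1), d(X_2)$ to be $\geq \alpha / (1 - \delta^2)$.

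Setting $h := \alpha \delta / (1 + \delta)$, I will split into three mutually exclusive subcases. If $1 - a \geq h$ and $1 - b \geq h$, I pick the rectangle $X_i$ attaining the density bound; its factors serve as $A', B'$ and both side-ratios are $\geq h$. If $a > 1 - h$, then $\omega(X_1) < h \mu(A)\nu(B)$ absorbs little of $R$, so $\omega(R \cap X_2) > (\alpha - h)\mu(A)\nu(B)$; the constraint $ab \leq 1 - \alpha$ combined with $a > 1-h$ yields $1 - b > (\alpha - h)/(1 - h) \geq h$, and taking $A' := A_1$, $B' := B \setminus B_1$ gives $d(A', B') \geq (\alpha - h)/(1 - b) \geq \alpha/(1 - \delta^2)$, where the last inequality follows from the choice $h \leq \alpha\delta/(1+\delta)$ combined with $1 - b \leq 1 - \delta$. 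The case $b > 1 - h$ is symmetric, producing $A' := A \setminus A_1$, $B' := B$. Because $h < \alpha/2$, the two unbalanced cases cannot co-occur, so the case analysis is consistent. Uniform definability of $A', B'$ is automatic: each is a Boolean combination of $A$, $B$ and instances of $\psi_1, \psi_2$ depending only on $R$.

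The main obstacle is the unbalanced regime: the rectangle realizing the density increment may have one side arbitrarily small, and one must switch to the complementary rectangle and exploit the a priori bounds $ab \leq 1 - \alpha$ (from the density hypothesis) and $b \geq \delta$ (from generic stability of the homogeneous pair) to simultaneously secure the density increment \emph{and} a lower bound on the shrinking side. The identity $h(1+\delta) \leq \alpha\delta$ that rearranges into the density bound is what dictates the specific choice $h = \alpha\delta/(1+\delta)$.
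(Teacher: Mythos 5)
Your argument is correct, and it follows the paper's overall strategy (extract an $R$-homogeneous pair of relative measure $\geq\delta$ inside $A\times B$ by localizing the measures, then boost the density by $\frac{1}{1-\delta^2}$ when the pair is negatively homogeneous), but the bookkeeping in the second step is genuinely different. The paper first assumes WLOG $a_0\leq\sqrt{1-\alpha}$, introduces an auxiliary threshold $d$ close to $1$, and splits according to whether $\nu(B_0)\leq d\nu(B)$: in one case it runs a three-rectangle pigeonhole over $A_0\times B_1$, $A_1\times B_0$, $A_1\times B_1$, and in the other it takes $A'=A\setminus A_0$, $B'=B$ and tunes $d$ at the end, obtaining $h=\min\{\delta,1-d,1-\sqrt{1-\alpha}\}$. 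You instead partition $A\times B$ into the homogeneous rectangle plus only two rectangles $(A\setminus A_1)\times B$ and $A_1\times(B\setminus B_1)$, run a two-term weighted average in the balanced regime, and handle the two unbalanced regimes $a>1-h$, $b>1-h$ directly from $ab\leq 1-\alpha$ and $a,b\geq\delta$; this removes the WLOG and the floating parameter $d$ and yields the explicit constant $h=\alpha\delta/(1+\delta)$ (I checked the inequalities $(\alpha-h)/(1-\delta)\geq\alpha/(1-\delta^2)$ and $(\alpha-h)/(1-h)\geq h$; both indeed reduce to $h\leq\alpha\delta/(1+\delta)$ and $\delta\leq 1$, and the exclusivity of the unbalanced cases follows from $h<\alpha/2$). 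One small correction: in the setting of this claim $\mu$ is an arbitrary Keisler measure, so $\mu_A$ need not be generically stable and Theorem~\ref{thm:delta-main} does not literally apply; you should instead invoke the asymmetric statement, Theorem~\ref{BasicPartitionGraphs} (together with Remark~\ref{rem: localizing generic stability} applied to $\nu_B$), which gives exactly the homogeneous pair you use — the rest of your proof is unaffected.
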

\begin{proof}
  We pick $d \in (0,1)$ to be determined later.

  We choose $A_0\subseteq A$, $B_0\subseteq B$ homogeneous with respect to $R$ and with
  $\mu(A_0) \geq \delta \mu(A)$ and $\nu(B_0)\geq \delta \nu(B)$ (applying 
Theorem~\ref{BasicPartitionGraphs} and Remark~\ref{rem: localizing generic stability} to $\mu_A, \nu_B$
  --- the localizations of $\mu$ on $A$ and $\nu$ on $B$, respectively).

  If $A_0\times B_0 \subseteq R$ then we take $A'=A_0$, $B' = B_0$, $h = \delta$ and we are done. So
  assume
  \begin{equation}
    \label{eq:1}
    (A_0\times B_0)\cap R =\emptyset.  
  \end{equation}

  Let $a_0=\dfrac{\mu(A_0)}{\mu(A)}$ and $b_0=\dfrac{\nu(B_0)}{\nu(B)}$.  Let $\alpha'=d(A,B)$, so
  $\alpha' \geq \alpha$.

  From \eqref{eq:1} it follows that $a_0b_0 \leq 1-\alpha' \leq 1-\alpha$.  In particular at least
  one of $a_0$ or $b_0$ is at most $\sqrt{1-\alpha}$.
  \\
  {\bf We assume $a_0 \leq \sqrt{1-\alpha}$.}

  \medskip

  Let $A_1=A\setminus A_0$ and $B_1 =B\setminus B_0$.

  \medskip

  \noindent{ \bf Case 1: $\nu(B_0) \leq  d \nu(B)$.} \\
  For definable $A'\subseteq A$ and $B'\subseteq B$ we write $\omega(R(A',B'))$ for
  $\omega( (A'\times B') \cap R)$.

  Since there are no $R$-edges between $A_0$ and $B_0$ we have
  \begin{equation}
    \label{eq:2}
    \omega(R(A_0,B_1)) +\omega(R(A_1,B_0))+\omega((A_1,B_1)) = \omega(R(A,B)) \geq \alpha \mu(A) \nu(B)
  \end{equation}
  We also have
  \begin{multline}
    \label{eq:3}
    \mu(A_0)\nu(B_1)+\mu(A_1)\nu(B_0)+\mu(A_1)\nu(B_1) \\= \mu(A) \nu(B)- \mu(A_0)\nu(B_0) \leq
    \mu(A)\nu(B)(1- \delta^2)
  \end{multline}

  A very simple combinatorial statement is that if $r_1+r_2+r_3 \geq r$ and $s_1+s_2+s_3 \leq s$
  then there is $i\in \{1,2,3 \}$ with $\dfrac{r_i}{s_i}\geq \dfrac{r}{s}$.

  So in this case we can choose $A'\in \{ A_0,A_1\}$ and $B'\in \{B_0,B_1\}$ such that
  \[ d(A',B')\geq \alpha \frac{1}{1-\delta^2},\]
  and $\mu(A')\geq h \mu(A)$, $\nu(B')\geq h\nu(B)$, where
  \[ h=\min\{ \delta, (1-d), 1-\sqrt{1-\alpha}\}. \]

  \medskip

  \noindent{\bf Case 2:  $\nu(B_0)  > d \nu(B)$.} \\
  In this case we will take $A' =A_1=A\setminus A_0$ and $B'=B$.  As above, let
  $B_1=B\setminus B_0$. Let $d'=1-d$.

  The maximal possible measure of the set of $R$-edges between $A_0$ and $B$ is
  \[ \omega(R(A_0,B))=\omega(R(A_0, B_1))\leq \mu(A_0)\nu(B_1) \leq
  \sqrt{1-\alpha}d'\mu(A)\nu(B). \]

  Thus for the measure of the set of $R$-edges between $A_1$ and $B$ we obtain
  \[ \omega(R(A_1,B))\geq \alpha \mu(A) \nu(B)- \sqrt{1-\alpha}d' \mu(A) \nu(B).\]

  Since $\mu(A_1)\nu(B) \leq (1-\delta) \mu(A) \nu(B)$ we obtain

  \[ d(A_1,B) =\frac{\omega(R(A_1,B))}{\mu(A_1)\nu(B) } \geq \frac{ \alpha -
    \sqrt{1-\alpha}d'}{1-\delta} = \alpha\frac{ 1-\sqrt{1-\alpha} (d'/\alpha)}{1-\delta}.\]

  As $d'$ decreases to $0^+$, the right side of the above inequality goes increasingly to
  $\alpha \dfrac{1}{1-\delta}$.  Since $0< \delta <1$ we have that
  $\dfrac{1}{1-\delta^2} < \dfrac{1}{1-\delta}$.  So we can choose $d\in(0,1)$ so that for $d'=1-d$
  the right side is at least $\alpha\dfrac{1}{1-\delta^2}$.

  \bigskip

  Combining the two cases together we take
  \[ h=\min\{ \delta, (1-d), 1-\sqrt{1-\alpha}\}. \]

  Uniform definability of $A',B'$ in all the cases follows from the uniform definability of
  $A_0,B_0$ and construction, so as always we can encode finitely many formulas into a single one.
\end{proof}

Now we can use this proposition inductively to prove the analogue of Proposition~\ref{prop:
  DensityForGraphs} for hypergraphs, essentially following the proof of \cite[Theorem
8.2]{GromovEtAl}.
\begin{prop}\label{prop: DensityRamseyHypergraphs}\label{HypergraphDensityVersion} 
  Let $\CM$ be a distal
  structure and $R(x_0, \ldots, x_{h-1})$ a definable relation. Given $\alpha >0$ there is
  $\varepsilon >0$ such that: given a generically stable product measure $\omega$ on
  $M^{|x_0|} \times M^{|x_1|} \times \cdots \times M^{|x_{h-1}|}$ with $\omega(R) \geq \alpha$ there
  are definable sets $A_i \subseteq M^{|x_i|}$ with $\omega|_{x_i}(A_i) \geq \varepsilon$ for all
  $i<h$ such that $\prod_{i<h} A_i \subseteq R$.  Moreover, each $A_i$ is defined by an instance of
  a formula that depends only on $R$ and $\alpha$.
\end{prop}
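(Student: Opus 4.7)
The plan is to proceed by induction on the arity $h$, with the base case $h=2$ being exactly Proposition~\ref{prop: DensityForGraphs}. For the inductive step, fix a definable relation $R(x_0, \ldots, x_{h-1})$ and a generically stable product measure $\omega$ on $M^{|x_0|} \times \cdots \times M^{|x_{h-1}|}$ with $\omega(R) \geq \alpha$.

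The key move is to regroup variables, treating $R$ as a binary relation $R'(x_0, y)$ with $y = (x_1, \ldots, x_{h-1})$. Set $\mu = \omega|_{x_0}$ and $\nu = \omega|_{x_1, \ldots, x_{h-1}}$; both are generically stable as marginals of a generically stable measure, and in fact smooth by distality (Fact~\ref{fac: distal iff gen stable is smooth}). By Proposition~\ref{prop: unique smooth amalgam} and associativity of smooth products, $\omega = \mu \otimes \nu$, so $(\mu \otimes \nu)(R') \geq \alpha$. Applying Proposition~\ref{prop: DensityForGraphs} to $R'$ produces definable sets $A_0 \subseteq M^{|x_0|}$ and $B \subseteq M^{|x_1|+\cdots+|x_{h-1}|}$ with $\mu(A_0), \nu(B) \geq \varepsilon_1$ and $A_0 \times B \subseteq R$, where $\varepsilon_1$ and the defining formulas for $A_0, B$ depend only on $R$ and $\alpha$.

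Next, $\nu$ is itself a generically stable product measure on $M^{|x_1|} \times \cdots \times M^{|x_{h-1}|}$, with $\nu|_{x_i} = \omega|_{x_i}$ and $\nu = \omega|_{x_1} \otimes \cdots \otimes \omega|_{x_{h-1}}$ by the same uniqueness of smooth product measures. Viewing the definable set $B$ as an $(h-1)$-ary relation with $\nu(B) \geq \varepsilon_1$, the inductive hypothesis yields definable sets $A_i \subseteq M^{|x_i|}$ for $i = 1, \ldots, h-1$ with $\omega|_{x_i}(A_i) \geq \varepsilon_2$ and $A_1 \times \cdots \times A_{h-1} \subseteq B$, each defined by an instance of a formula depending only on the formula defining $B$ and on $\varepsilon_1$, hence only on $R$ and $\alpha$. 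Setting $\varepsilon = \min(\varepsilon_1, \varepsilon_2)$ we conclude $\prod_{i<h} A_i \subseteq A_0 \times B \subseteq R$, completing the induction. The main point requiring care is verifying that $\nu$ is genuinely a product measure of its coordinate marginals in the sense of Section~\ref{sec: gen stab meas}, as this is precisely what allows the inductive hypothesis to be reapplied; this is where distality enters essentially, through the equivalence of generic stability with smoothness and the resulting uniqueness of smooth product extensions in Proposition~\ref{prop: unique smooth amalgam}.
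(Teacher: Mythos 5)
Your proof is correct and takes essentially the same route as the paper: induction on the arity, grouping $x_1,\ldots,x_{h-1}$ into a single block, applying the binary density statement (Proposition~\ref{prop: DensityForGraphs}) to obtain $A_0$ and a uniformly definable set $B$, and then applying the inductive hypothesis to $B$ with the marginal measure, taking $\varepsilon=\min(\varepsilon_1,\varepsilon_2)$. The only difference is that you make explicit, via smoothness and Proposition~\ref{prop: unique smooth amalgam}, why $\omega$ is a product of the grouped marginals and why $\nu$ is again a product measure of its coordinate marginals --- a technical point the paper's proof leaves implicit.
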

\begin{proof}
  Let $h \geq 2$ be given, and assume inductively that we have proved the proposition for all
  $i \leq h$. Let $R(x_0, \ldots, x_{h})$ and $\alpha>0$ be given. Let $\omega$ be a generically
  stable product measure on $M^{|x_0|}\times \cdots \times M^{|x_{h}|}$. Applying 
  Proposition~\ref{prop: DensityForGraphs} with $h=2$ to the \emph{binary} relation $R(x_0; x_1, \ldots, x_h)$
  we find some $\varepsilon' > 0$, $A_0$ with $\omega|_{x_0}(A_0) \geq \varepsilon' $ and
  $A \subseteq M^{|x_1|} \times \cdots \times M^{|x_{h}|}$ with
  $\omega|_{x_1,\ldots,x_h}(A) \geq \varepsilon'$ such that $R$ holds on all elements of
  $A_0 \times A$ (the corresponding projections of $\omega$ are clearly generically
  stable). Moreover, $A = R'(M^{|x_1|} \times \cdots \times M^{|x_h|})$ for some uniformly definable
  (depending only on $R$ and $\alpha$) relation $R'$.  We apply the inductive assumption to $R'$
  with $h-1$, $\alpha = \varepsilon'$ and $\omega|_{x_1, \ldots, x_h}$, which gives us some
  $\varepsilon'' > 0$ and uniformly definable sets $A_i, 1\leq i \leq h$ with
  $\omega|_{x_i}(A_i) \geq \varepsilon ''$ and such that
  $A_1 \times \cdots \times A_h \subseteq R'$, which implies
  $A_0 \times A_1 \cdots \times A_h \subseteq R$. Take
  $\varepsilon = \min\{ \varepsilon', \varepsilon'' \}$. All the data is chosen uniformly depending
  only on $R,\alpha$.
\end{proof}

The density version implies a generalization of Theorem~\ref{BasicPartitionGraphs} for hypergraphs.
\begin{cor}\label{DensityForGraphs}
  Let $\CM$ be a distal structure and $R(x_0, \ldots, x_{h-1})$ a definable relation. Then there is
  $\delta >0$ such that for any generically stable measures $\mu_i$ on $M^{|x_i|}$, there are $A_i$
  with $\mu_i(A_i) \geq \delta$ for all $i<h$, uniformly definable in terms of $R$, and such that
  either $\prod_{i<h} A_i \subseteq R$ or $\prod_{i<h} A_i \cap R =\emptyset$.
\end{cor}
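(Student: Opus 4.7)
The plan is to deduce this from the density version (Proposition \ref{HypergraphDensityVersion}) by applying it to either $R$ or its complement, after producing a canonical product measure.

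First, I would use distality to promote each $\mu_i$ to a smooth measure. By Fact \ref{fac: distal iff gen stable is smooth}, in the distal structure $\CM$ every generically stable measure is smooth; in particular each $\mu_i$ is smooth. Applying Proposition \ref{prop: unique smooth amalgam} inductively then yields a unique product measure $\omega = \mu_0 \otimes \cdots \otimes \mu_{h-1}$ on $M^{|x_0|}\times\cdots\times M^{|x_{h-1}|}$, and this $\omega$ is itself smooth, hence generically stable. By construction $\omega|_{x_i} = \mu_i$ for each $i$.

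Next, since $\omega(R) + \omega(\neg R) = 1$, at least one of $\omega(R) \geq \tfrac12$ or $\omega(\neg R) \geq \tfrac12$ holds. I would apply Proposition \ref{HypergraphDensityVersion} twice, with $\alpha = \tfrac12$: once to the relation $R(x_0, \ldots, x_{h-1})$, and once to its complement (which is of course definable from $R$, uniformly in $R$). Each application produces a constant $\varepsilon > 0$ and uniformly definable sets $A_i \subseteq M^{|x_i|}$ with $\mu_i(A_i) = \omega|_{x_i}(A_i) \geq \varepsilon$ such that $\prod_{i<h} A_i$ is contained in $R$ (respectively, in $\neg R$). Taking $\delta$ to be the minimum of the two $\varepsilon$'s, and combining the finitely many defining formulas from the two cases into a single formula by the standard encoding trick used throughout the paper, gives uniform definability of $A_0, \ldots, A_{h-1}$ in terms of $R$ alone.

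There is no real obstacle once the product measure is available: the only mild subtlety is ensuring that a single product measure can be chosen to which the density version applies, and this is precisely what distality buys us via the smoothness-equals-generic-stability equivalence together with uniqueness of the smooth amalgam. The rest is a dichotomy argument between $R$ and $\neg R$ and a formula-encoding step, both already standard in the preceding sections.
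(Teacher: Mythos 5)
Your proof is correct and follows essentially the same route as the paper: form the product measure $\omega = \mu_0 \otimes \cdots \otimes \mu_{h-1}$, observe that either $\omega(R) \geq \tfrac12$ or $\omega(\neg R) \geq \tfrac12$, apply Proposition~\ref{HypergraphDensityVersion} with $\alpha = \tfrac12$ to both $R$ and $\neg R$, and take $\delta$ to be the minimum of the two resulting $\varepsilon$'s. Your extra step justifying the product via smoothness (Fact~\ref{fac: distal iff gen stable is smooth} plus Proposition~\ref{prop: unique smooth amalgam}) is a careful spelling-out of what the paper states more briefly, not a different argument.
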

\begin{proof}
  Since a product of generically stable measures is generically stable, the measure
  $\omega = \mu_0 \otimes \cdots \otimes \mu_{h-1}$ is generically stable, and either
  $\omega(R) \geq \frac{1}{2}$ or $\omega (\neg R) \geq \frac{1}{2}$. Applying 
  Proposition~\ref{HypergraphDensityVersion} with $\alpha = \frac{1}{2}$ to $R$ and to $\neg R$ we obtain some
  $\varepsilon_1, \varepsilon_2$ respectively. But then
  $\delta = \min\{\varepsilon_1, \varepsilon_2\}$ satisfies the conclusion.
\end{proof}
Besides, the formulas defining homogeneous subsets can be chosen depending just on the
formula defining the edge relation, and not on the parameters used (in the semialgebraic setting
this corresponds to saying that the complexity of the homogeneous subsets is bounded in terms of the
complexity of the edge relation, and does not depend on the choice of the coefficients of the
polynomials involved).

\begin{cor}\label{cor: density extra parameters} 
  Let $\CM$ be a distal structure and
  $\phi(x_0, \ldots, x_{h-1}, y)$ a formula.  Given $\alpha >0$ there is $\varepsilon >0$ such that:
  for a definable relation $R(x_0, \ldots, x_{h-1}) = \phi(x_0, \ldots, x_{h-1}, c)$ with some
  $c \in M^{|y|}$ and a generically stable product measure $\omega$ on
  $M^{|x_0|} \times M^{|x_1|} \times \cdots \times M^{|x_{h-1}|}$ with $\omega(R) \geq \alpha$ there
  are definable sets $A_i \subseteq M^{|x_i|}$ with $\omega|_{x_i}(A_i) \geq \varepsilon$ for all
  $i<h$ and $\prod_{i<h} A_i \subseteq R$.  Moreover, each $A_i$ is defined by an instance of a
  formula that depends only on $\phi$ and $\alpha$.
\end{cor}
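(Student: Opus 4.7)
The plan is to reduce to Proposition \ref{HypergraphDensityVersion} by absorbing the parameter $c$ as an additional coordinate carrying a Dirac measure, so that the ``template'' $\phi$ itself becomes parameter-free. Let $\delta_c$ be the Keisler measure on $M^{|y|}$ concentrated at $c$; by Fact \ref{ExamplesOfSmoothMeasures}(1) it is smooth, hence generically stable. Define a Keisler measure $\omega^{*}$ on $M^{|x_0|}\times\cdots\times M^{|x_{h-1}|}\times M^{|y|}$ by $\omega^{*}(X) := \omega(\{\bar x : (\bar x,c) \in X\})$. A direct check on definable boxes shows that $\omega^{*}$ is a product measure with marginals $\omega|_{x_0}, \ldots, \omega|_{x_{h-1}}, \delta_c$, and one has $\omega^{*}(\phi) = \omega(R) \geq \alpha$.

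The key technical point is that $\omega^{*}$ is itself generically stable. I would verify this directly using Fact \ref{GenStable}(2): given a formula $\psi(x_0,\ldots,x_{h-1},y;z)$ and $\varepsilon>0$, apply generic stability of $\omega$ to the formula $\psi(x_0,\ldots,x_{h-1},c;z)$ with $c$ absorbed as a fixed parameter, obtain approximating tuples $a_1', \ldots, a_m' \in M^{|x_0|+\cdots+|x_{h-1}|}$, and observe that the tuples $(a_j', c)$ then $\varepsilon$-approximate $\omega^{*}$ uniformly in $z$.

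Next I apply Proposition \ref{HypergraphDensityVersion} to the parameter-free formula $\phi$, viewed as an $(h+1)$-ary relation in $x_0, \ldots, x_{h-1}, y$, with the generically stable product measure $\omega^{*}$ and density threshold $\alpha$. This yields an $\varepsilon > 0$ and definable sets $A_0, \ldots, A_{h-1}, B$ with $\omega|_{x_i}(A_i) \geq \varepsilon$ for $i < h$, $\delta_c(B) \geq \varepsilon$, and $A_0 \times \cdots \times A_{h-1} \times B \subseteq \phi$; crucially, both $\varepsilon$ and the formulas defining the $A_i$ depend only on $\phi$ and $\alpha$, since the input relation is now parameter-free. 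Since $\delta_c(B) > 0$ forces $c \in B$, it follows that $A_0 \times \cdots \times A_{h-1} \subseteq R$, as required. The main conceptual subtlety is that Proposition \ref{prop: unique smooth amalgam} guarantees a well-behaved product only for pairs of smooth measures, whereas here $\omega$ is only assumed generically stable; the argument works because $\delta_c$ concentrates on a point, so $\omega^{*}$ admits the explicit specialization formula above, and its generic stability reduces in one step to that of $\omega$.
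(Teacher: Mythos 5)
Your proof is correct and is essentially the paper's own argument: the paper likewise applies Proposition~\ref{prop: DensityRamseyHypergraphs} to the parameter-free relation $R'(x_0,\ldots,x_{h-1},y)=\phi(x_0,\ldots,x_{h-1},y)$ with the measure $\omega\otimes\delta_c$, where $\delta_c$ is the (generically stable) Dirac measure at $c$. The details you spell out — that $\omega^{*}$ is a product measure, that its generic stability follows from that of $\omega$ via Fact~\ref{GenStable}(2), and that $\delta_c(B)>0$ forces $c\in B$ — are exactly what the paper's ``follows immediately'' leaves implicit.
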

\begin{proof}
  Follows immediately by Proposition~\ref{prop: DensityRamseyHypergraphs} applied to the
  relation $$R'(x_0, \ldots, x_{h-1},y) = \phi(x_0, \ldots, x_{h-1},y)$$ and to the generically
  stable product measure $\omega' = \omega \otimes \delta_c$, where $\delta_c$ is a (generically
  stable) $\{ 0,1 \}$-valued measure on $M^{|y|}$ concentrated on $c$.
\end{proof}

\begin{sample}
  Let $\lambda_n$ be the Lebesgue measure on $\mathbb{R}^n$ restricted to the unit cube,
  i.e. $\lambda_n(X) = \Lambda_n(X \cap I_n)$ where $\Lambda_n$ is the standard Lebesgue measure and
  $I_n$ is the unit cube in $\mathbb{R}^n$.

  Let $\mathcal{R}$ be an o-minimal expansion of $\mathbb{R}$ and $R(x_1, \ldots, x_n;u)$ be a
  formula. Then for any $\alpha > 0$ there is some $\varepsilon > 0$ such that for any
  $c \in \mathbb{R}^{|u|}$ with $\lambda_n(R(\mathbb{R}^n;c)) \geq \alpha$ there are definable
  $A_i \subseteq \mathbb{R}$, $i=1, \ldots, n$ with $\lambda_1(A_i) \geq \varepsilon$ and
  $A_1 \times \cdots \times A_n \subseteq R(\mathbb{R}^n,c)$.

  This follows from Corollary~\ref{cor: density extra parameters} and Fact~\ref{ExamplesOfSmoothMeasures}.
\end{sample}

Also we get a generalization of the original semialgebraic counting version over finite sets from
Theorem~\ref{thm:semialg-Ramsey} with additional control on the parameters over which the
homogeneous subsets are defined.

\begin{cor}\label{BasicRamseyForCountingMeasures} Let $\CM$ be a distal structure and let a formula
  $\phi(x,y,z)$ be given. Then there is some $\delta= \delta(\phi)>0$ and formulas $\psi_1(x,z_1)$
  and $\psi_2(y,z_2)$ depending just on $\phi$ and satisfying the following. For any definable
  relation $R(x,y) = \phi(x, y, c)$ for some $c \in M^{|z|}$ and finite
  $A \subseteq M^{|x|}, B \subseteq M^{|y|}$ there are some $A' \subseteq A, B' \subseteq B$ with
  $|A'| \geq \delta|A|, |B'| \geq \delta |B|$ and
  \begin{enumerate}
  \item the pair $A',B'$ is $R$-homogeneous,
  \item there are some $c_1 \in A^{|z_1|}$ and $c_2 \in B^{|z_2|}$ such that $A' = \psi_1(A,c_1)$
    and $B' = \psi_2(B,c_2)$.
  \end{enumerate}
\end{cor}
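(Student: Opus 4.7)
The plan is to combine two ingredients already present in the paper: the density-with-extra-parameters version (Corollary~\ref{cor: density extra parameters}) and the fact that the normalized counting measure on a finite set is smooth. Let $\mu_A$ and $\mu_B$ be the normalized counting measures on $A \subseteq M^{|x|}$ and $B \subseteq M^{|y|}$ respectively. By Fact~\ref{ExamplesOfSmoothMeasures}(1) they are smooth, hence generically stable, and by Proposition~\ref{prop: unique smooth amalgam} their unique product $\omega := \mu_A \otimes \mu_B$ is again smooth. For any fixed $c \in M^{|z|}$ at least one of $\omega(\phi(x,y,c))$ or $\omega(\neg\phi(x,y,c))$ is $\geq 1/2$.

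I apply Corollary~\ref{cor: density extra parameters} with $h=2$ and $\alpha = 1/2$, to $\phi$ or to $\neg\phi$ depending on which side has large measure. This produces a constant $\varepsilon > 0$ and formulas $\psi_1(x,z_1), \psi_2(y,z_2)$ depending only on $\phi$, together with parameters $c_1, c_2$ such that $\mu_A(\psi_1(M,c_1)) \geq \varepsilon$, $\mu_B(\psi_2(M,c_2)) \geq \varepsilon$, and the pair $\psi_1(M,c_1), \psi_2(M,c_2)$ is $R$-homogeneous for $R = \phi(\cdot,\cdot,c)$. Encoding the two alternatives into a single pair of formulas (still depending only on $\phi$) and setting $A' := A \cap \psi_1(M,c_1)$ and $B' := B \cap \psi_2(M,c_2)$ gives $|A'| \geq \varepsilon|A|$ and $|B'| \geq \varepsilon|B|$ with $A' \times B'$ being $R$-homogeneous, which establishes condition (1) with $\delta := \varepsilon$.

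For condition (2), that $c_1 \in A^{|z_1|}$ and $c_2 \in B^{|z_2|}$, I trace the construction back through Theorem~\ref{BasicPartitionGraphs}. The pivotal observation is that for a counting measure on a finite set $F$, the $\varepsilon$-approximation supplied by Proposition~\ref{UniformEpsilonNetGenStable} --- and behind it the VC-theorem (Fact~\ref{VC-theorem}) --- can be realized inside $F$ itself, since the VC-theorem is stated for arbitrary finite probability spaces. Consequently the cutting set $S$ from Claim~\ref{CrossCuttingExists} can be chosen inside $F$, and the defining parameters of the chambers produced in the proof of Theorem~\ref{BasicPartitionGraphs} land automatically in the support of the generically stable measure being used in the asymmetric step. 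Running the asymmetric theorem once with $\mu_B$ in the role of the generically stable measure (which places the parameters of the $M^{|x|}$-side chamber in $B$) and once with $\mu_A$ in that role (placing the parameters of the $M^{|y|}$-side set in $A$), then symmetrizing and encoding the alternatives into a single pair of formulas, yields the parameter placements required in condition (2).

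The principal subtlety is the final bookkeeping: simultaneously guaranteeing that the formulas $\psi_1, \psi_2$ depend only on $\phi$ (uniformly in $c$), that the conclusion is $R$-homogeneity rather than its complement, and that the defining parameters are placed in the desired finite sets. Everything else is a direct assembly of the uniform density machinery developed in Section~\ref{sec: density distal Ramsey}.
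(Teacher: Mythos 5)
Your first step --- counting measures on $A$ and $B$, their (smooth, hence generically stable) product, and an application of Corollary~\ref{cor: density extra parameters} with $\alpha=\frac{1}{2}$ to $\phi$ or $\neg\phi$, yielding uniform $\psi_1,\psi_2$ and a homogeneous pair of relative size $\geq\varepsilon$ --- is exactly the paper's argument for part (1), and it is correct.

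Your argument for part (2), however, has a genuine gap. Tracing parameters through Theorem~\ref{BasicPartitionGraphs} cannot give the placement the corollary asks for, even granting your (plausible) claim that the cutting set $S$ of Claim~\ref{CrossCuttingExists} can be taken inside the support of a counting measure. In the asymmetric theorem the chamber $C_0\subseteq M^{|x|}$ is defined by $\psi(x,\vec d)$ with $\vec d\in S^l$, and $S$ lives in $M^{|y|}$; the sets $D_1,D_2\subseteq M^{|y|}$ are then defined from $C_0$ and $\phi(x,y,c)$. So in a run where $\nu$ is the counting measure on $B$, \emph{both} sides of the homogeneous pair are defined with parameters from $B$ (together with the external parameter $c$, which need not lie in $A\cup B$ at all); the swapped run puts both sets' parameters in $A$. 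Neither run produces what is required --- $A$-parameters for $A'$ and $B$-parameters for $B'$ simultaneously --- and ``symmetrizing'' does not help: the $x$-side set from one run and the $y$-side set from the other belong to different homogeneous pairs, so there is no reason their product is $R$-homogeneous. In addition, the formulas of Corollary~\ref{cor: density extra parameters} are not single chamber formulas but conjunctions produced by the iterated density increment, so their instances involve parameters from localizations and from $c$ that no choice of $\varepsilon$-nets will move into $A$ or $B$. The missing idea is Remark~\ref{rem: UDTFS}: in a distal structure, for any formula the trace of an arbitrary instance on a finite set can be redefined by an instance of a fixed formula with parameters \emph{from that finite set}. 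Applying this to $\psi_1$ with the finite set $A$ and to $\psi_2$ with the finite set $B$ (after the sets $A'=A\cap\psi_1(M,c_1)$, $B'=B\cap\psi_2(M,c_2)$ have already been found, with $c_1,c_2$ anywhere in $M$) gives part (2) in one line, which is how the paper concludes.
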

\begin{proof}
  Let $\psi_1(x,z_1), \psi_2(x,z_2), \varepsilon$ be as given by Corollary~\ref{cor: density extra
    parameters}. Then the existence of $A',B'$ follows by defining $\mu(X)$ ($\nu(X))$ to be the
  normalized number of points in $X\cap A$ (resp., $X \cap B$). Such Keisler measures are always
  generically stable by Fact~\ref{ExamplesOfSmoothMeasures}.

  For Part (2), by Remark~\ref{rem: UDTFS} we can find some formulas $\psi'_1(x,z'_1)$ and
  $\psi'_2(x,z'_2)$ such that for any finite sets $A,B$ and $c_1, c_2$ there are some
  $c'_1 \in A^{|z'_1|}, c'_2 \in B^{|z'_2|}$ such that $\psi_1(A,c_1) = \psi'_1(A,c'_1)$ and
  $\psi'_2(B,c_2) = \psi'_2(B,c'_2)$.
\end{proof}

We will show in Section~\ref{sec: equiv to distality} that the most basic version of Corollary
\ref{BasicRamseyForCountingMeasures} characterizes distality. This is not the case however if we do
not require definability of the homogeneous subsets.

\begin{rem}
  \begin{enumerate}
  \item If every definable relation in $\CM$ satisfies the strong Erd\H{o}s-Hajnal Property and
    $\mathcal{N}$ is interpretable in $\CM$, then every definable relation in $\mathcal{N}$
    satisfies the strong Erd\H{o}s-Hajnal Property.
  \item Let $\CM$ and $\mathcal{N}$ be two structures in the same language and assume that
    $\mathcal{N}$ embeds into $\CM$. If all quantifier-free definable relations in $\CM$ satisfy the
    strong Erd\H{o}s-Hajnal Property, then all quantifier-free definable relations in $\mathcal{N}$
    satisfy the strong Erd\H{o}s-Hajnal Property as well.
  \end{enumerate}
\end{rem}

By the remark and Corollary~\ref{BasicRamseyForCountingMeasures} we have the following.
\begin{cor}
  If $\CM$ is distal and $\mathcal{N}$ is interpretable in $\CM$ (embeds into $\CM$), then all
  definable (resp., quantifier-free definable) relations in $\mathcal{N}$ satisfy the strong
  Erd\H{o}s-Hajnal Property.
\end{cor}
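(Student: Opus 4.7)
The plan is to chain Corollary \ref{BasicRamseyForCountingMeasures} with the two transfer principles stated in the preceding Remark. First, I would observe that in a distal structure $\CM$, every definable relation already has the strong Erd\H{o}s-Hajnal property: given finite sets $A, B$ in the appropriate sorts, the normalized counting measures on $A$ and $B$ are smooth by Fact~\ref{ExamplesOfSmoothMeasures}(1), hence generically stable, so Corollary \ref{BasicRamseyForCountingMeasures} applies and yields a uniform constant $\delta(R) > 0$ together with homogeneous subsets $A' \subseteq A$, $B' \subseteq B$ with $|A'| \geq \delta |A|$ and $|B'| \geq \delta |B|$.

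It then suffices to verify the two clauses of the Remark. For the interpretation case, suppose $\mathcal{N}$ is interpretable in $\CM$ via a definable surjection $\pi : X \twoheadrightarrow N$, where $X \subseteq M^k$ is definable and the fibers of $\pi$ are the classes of a definable equivalence relation on $X$. A definable relation $R \subseteq N^m \times N^n$ then pulls back to a definable relation $\tilde R \subseteq M^{km} \times M^{kn}$. Given finite $A \subseteq N^m$ and $B \subseteq N^n$, one picks a single $\pi$-preimage for each element to obtain finite sets $\tilde A, \tilde B$ of the same cardinality, applies strong Erd\H{o}s-Hajnal in $\CM$ to $(\tilde A, \tilde B)$ and $\tilde R$, and pushes the resulting homogeneous pair forward under $\pi$; the densities are preserved because distinct representatives were chosen. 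The embedding case is more direct still: quantifier-free relations in $\mathcal{N}$ are literal pullbacks of the corresponding quantifier-free relations in $\CM$, so any embedding $\mathcal{N} \hookrightarrow \CM$ sends finite sets to finite sets of the same size and both preserves and reflects the relation, and so the strong Erd\H{o}s-Hajnal property transfers verbatim.

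The argument is almost entirely bookkeeping; the only step requiring a moment's care is the interpretation case, where one must check that the uniform constant $\delta$ produced by strong Erd\H{o}s-Hajnal applied to $\tilde R$ depends only on the defining formula of $\tilde R$ (hence only on $R$), and that choosing distinct representatives yields $|\tilde A| = |A|$ and $|\tilde B| = |B|$. Both points follow immediately from the uniformity in Corollary \ref{BasicRamseyForCountingMeasures}, so no genuine obstacle arises.
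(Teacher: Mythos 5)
Your proposal is correct and follows the paper's own route: the paper likewise deduces the corollary from Corollary~\ref{BasicRamseyForCountingMeasures} (strong Erd\H{o}s-Hajnal in $\CM$ via counting measures) combined with the two transfer clauses of the preceding Remark. The only difference is that you spell out the (easy, unproved-in-the-paper) Remark by choosing representatives along the interpretation and pulling back quantifier-free formulas along the embedding, which is exactly the intended bookkeeping.
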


\begin{sample}\label{ex: non-distal structures satisfying *} The following relations satisfy the
  strong Erd\H{o}s-Hajnal Property.
  \begin{enumerate}

  \item Definable relations in an arbitrary algebraically closed field of characteristic $0$ (since
    $\mathrm{ACF}_0$ is interpretable in the distal theory of real closed fields $\mathrm{RCF}$).
  \item Definable (in the language $L_{\mathrm{div}}$) relations in an arbitrary non-trivially
    valued algebraically closed field of residue characteristic $0$ (since its theory
    $\mathrm{ACVF}_{0,0}$ is interpretable in the theory of real closed valued fields
    $\mathrm{RCVF}$ (see e.g. \cite[Corollary 6.3]{VCD1}), which is distal in view of Example~\ref{ex: RCVF}).
  \item Quantifier-free definable relations in an arbitrary field of characteristic $0$ (as it can
    be embedded into some model of $\mathrm{ACF}_0$).
  \item Quantifier-free definable (in $L_{\mathrm{div}}$) relations in an arbitrary valued field of
    equicharacteristic $0$ (as it can always be embedded into a model of $\mathrm{ACVF}_{0,0}$).
  \end{enumerate}
\end{sample}

Thus (see Remark~\ref{rem: strong EH implies EH}) we obtain many new families of graphs satisfying
the Erd\H{o}s-Hajnal conjecture.

\begin{rem}
  \begin{enumerate}
  \item Every relation satisfying the strong Erd\H{o}s-Hajnal property is NIP.
  \item If all definable relations on $\CM$ satisfy the Erd\H{o}s-Hajnal property then $\CM$ is NIP.
  \end{enumerate}
\end{rem}
\begin{proof}
  (1) If the relation $R(x,y)$ is not NIP, then for any finite bi-partite graph $G$ there are some
  $A \subseteq M^{|x|}, B \subseteq M^{|y|}$ such that $G$ is isomorphic to $(A,B,R \cap (A \times B))$. By the
  optimality of the bound $O(\log n)$ on the size of homogeneous subsets in arbitrary bi-partite
  graphs it follows that $R$ does not have the strong Erd\H{o}s-Hajnal property.

  (2) If the relation $R(x,y)$ is not NIP, let $R' \subseteq M^{|x|+|y|} \times M^{|x|+|y|}$ be
  defined by $R'(ab,cd) \iff R(a,d) \lor R(c,b)$. This is a symmetric relation such that for any
  finite graph $G$ there is some set $A \subseteq M^{|x|+|y|}$ such that $G$ is isomorphic to
  $(R',A)$ (see e.g. \cite[Lemma 2.2]{laskowski2003karp}). Again optimality of the logarithmic bound
  for arbitrary graphs implies that $R'$ does not have the Erd\H{o}s-Hajnal property.
\end{proof}

\section{Regularity lemma for distal hypergraphs}\label{sec: distal regularity}
\subsection{Regularity lemmas for restricted families of graphs}
Szemer\'edi's regularity lemma is a fundamental result in graph combinatorics with many versions and
applications in extremal combinatorics, number theory and computer science (see
\cite{komlos1996szemeredi} for a survey). In it's simplest form for bi-partite graphs, it can be
presented as following.

\begin{fact}If $\varepsilon>0$, then there exists $K=K\left(\varepsilon\right)$ such that: for any
  finite bi-partite graph $R\subseteq A\times B$, there exist partitions
  $A=A_{1}\cup\ldots\cup A_{k_1}$ and $B=B_{1}\cup\ldots\cup B_{k_2}$ into non-empty sets, and a set
  $\Sigma\subseteq\left\{ 1,\ldots,k_1\right\} \times\left\{ 1,\ldots,k_2\right\}$ with the
  following properties.

  \begin{enumerate}

  \item Bounded size of the partition: $k_1, k_2 \leq K$.

  \item Few exceptions:
    $\left|\bigcup_{\left(i,j\right)\in\Sigma}A_{i}\times
      B_{j}\right|\geq(1-\varepsilon)\left|A\times B\right|$

  \item $\varepsilon$-regularity: for all $\left(i,j\right)\in\Sigma$, and all
    $A'\subseteq A_{i},B'\subseteq B_{j}$, one has  
          $$ \left| \left| R \cap \left( A' \times B' \right) \right| - d_{ij} \left|A'\right| \left| B' \right| \right| \leq \varepsilon \left|A \right| \left|B \right|,$$
          where $d_{ij} = \frac{\left| R \cap \left( A_i \times B_j \right) \right|}{\left|A_i \right| \left| B_j \right|}$.
  \end{enumerate}

\end{fact}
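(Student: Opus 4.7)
The plan is to follow Szemer\'edi's classical \emph{energy increment} method. First, for a pair of partitions $\mathcal{P}=(A_1,\dots,A_{k_1})$ of $A$ and $\mathcal{Q}=(B_1,\dots,B_{k_2})$ of $B$, I would introduce the \emph{mean-square density}
$$q(\mathcal{P},\mathcal{Q}) \;=\; \sum_{i,j}\frac{|A_i|\,|B_j|}{|A|\,|B|}\,d_{ij}^2,$$
where $d_{ij}=|R\cap(A_i\times B_j)|/(|A_i|\,|B_j|)$. A short Jensen/Cauchy--Schwarz computation gives $q\in[0,1]$, and the same argument shows that refining either partition can only \emph{increase} $q$.

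The crucial step is an \emph{increment lemma}: if the pairs $(i,j)$ that fail the $\varepsilon$-regularity condition of item $(3)$ are so numerous that $\bigl|\bigcup_{(i,j)\notin\Sigma}A_i\times B_j\bigr|>\varepsilon\,|A\times B|$, then by picking a witness $A'_{ij}\subseteq A_i$, $B'_{ij}\subseteq B_j$ for each bad pair and forming the common refinement $\mathcal{P}',\mathcal{Q}'$ that separates every $A'_{ij}$ inside $A_i$ and every $B'_{ij}$ inside $B_j$, one has $q(\mathcal{P}',\mathcal{Q}')\geq q(\mathcal{P},\mathcal{Q})+c\,\varepsilon^{5}$ for an absolute constant $c>0$. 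The proof is a weighted variance estimate: within each bad rectangle $A_i\times B_j$, the split along $(A'_{ij},B'_{ij})$ contributes a variance in the densities of order $(\varepsilon|A||B|/(|A_i||B_j|))^{2}\cdot|A_i||B_j|/(|A||B|)$ to $q$, and summing over bad rectangles covering an $\varepsilon$-fraction of $A\times B$ produces the required gain.

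Third, I would iterate. Start from the trivial partition $\mathcal{P}_0=\{A\}$, $\mathcal{Q}_0=\{B\}$, for which $q$ is nonnegative. At each stage, either the $\varepsilon$-regular pairs already cover more than a $(1-\varepsilon)$-fraction of $A\times B$ (and we are done, taking $\Sigma$ to be the set of regular pairs), or the increment lemma gives a refinement with $q$ larger by at least $c\varepsilon^{5}$. Since $q\leq 1$ the process terminates within $O(\varepsilon^{-5})$ iterations. Each iteration at most doubles the number of parts inside each existing piece on each side, so $k_1$ and $k_2$ grow by at most a factor of $2^{k_1}$ and $2^{k_2}$ respectively; hence $K(\varepsilon)$ is a tower of twos of height $O(\varepsilon^{-5})$.

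The main obstacle is the careful bookkeeping in the increment lemma, in particular verifying that the local variance gains inside different bad rectangles truly add up when one passes to the common refinement. This is handled cleanly by noting that $q$ equals $\|\mathbb{E}[\mathbf{1}_R\mid\sigma(\mathcal{P}\otimes\mathcal{Q})]\|_2^{2}$, so refining the $\sigma$-algebra can only increase $q$; consequently, simultaneously refining along all bad rectangles dominates the sum of the individual local increments. The tower-type dependence of $K$ on $\varepsilon$ that results is, by Gowers's lower bound, essentially unavoidable.
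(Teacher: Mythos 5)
This statement is the classical bipartite Szemer\'edi regularity lemma, which the paper records as a known Fact (citing the Koml\'os--Simonovits survey) and does not prove; so there is nothing to compare against in the text itself. Your energy-increment argument is the standard proof and is essentially correct: the mean-square density $q$ is monotone under refinement, the defect Cauchy--Schwarz computation gives the local gain, and monotonicity justifies summing the local gains after passing to the common refinement. Two small remarks on the bookkeeping. First, since irregularity in item (3) is measured at the global scale $\varepsilon\left|A\right|\left|B\right|$ (rather than relative to $\left|A_i\right|\left|B_j\right|$ with lower bounds on $\left|A'\right|,\left|B'\right|$ as in the usual formulation), the witness of a single bad pair already forces $\left|A'\right|\left|B'\right|\geq\varepsilon\left|A\right|\left|B\right|$ and yields an energy increment of order $\varepsilon^{2}$; your $c\varepsilon^{5}$ is a weaker but still valid lower bound, so the iteration count $O(\varepsilon^{-5})$ (indeed $O(\varepsilon^{-2})$) is fine. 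Second, in one refinement step each part $A_i$ is split by up to $k_2$ witness sets, hence into up to $2^{k_2}$ pieces (and symmetrically for $B_j$), not merely doubled, and the exponents in your growth estimate are swapped ($k_1$ grows by a factor $2^{k_2}$ and $k_2$ by $2^{k_1}$); neither point affects the conclusion that $K(\varepsilon)$ can be taken to be a tower of bounded height depending only on $\varepsilon$.
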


In general the bound on the size of the partition $K$ is known to grow as an exponential tower of
height $\frac{1}{\varepsilon}$, and the result is less informative in the case of sparse graphs.
Recently several improved regularity lemmas were obtained in the context of definable sets in
certain structures or in restricted families of structures.
\begin{enumerate}
\item \cite{tao2012expanding} Algebraic graphs of bounded complexity in large finite fields
  (equivalently, definable graphs in pseudofinite fields): pieces of the partition are algebraic of
  bounded complexity, no exceptional pairs, stronger regularity. Some generalizations and
  simplifications were obtained in \cite{pillay2013remarks, garcia2014pseudofinite} and by
  Hrushovski (unpublished).

\item \cite{lovasz2010regularity} Graphs of bounded VC-dimension: density arbitrarily close to $0$
  or $1$, the size of the partition is bounded by a polynomial in
  $\left(\frac{1}{\varepsilon}\right)$.

  \begin{enumerate}
  \item \cite{malliaris2014regularity} Graphs without arbitrarily large half-graphs, corresponding to
    the case of stable graphs (no exceptional pairs).

  \item \cite{GromovEtAl, fox2015polynomial} Semialgebraic graphs of bounded complexity.
  \end{enumerate}
\end{enumerate}

We remark that the classes of structures in (1), 2(a) and 2(b) are orthogonal to each other. In the
next section we give a generalization of the case 2(b) to graphs definable in arbitrary distal
structures.  As remarked before, the stable and the distal cases present two extremal cases of
general NIP structures.

\subsection{Distal regularity lemma}

We work in a model $\CM$ of a distal theory $T$. We have sorts $S_1,\dots, S_k$ (i.e.\  definable
subsets of some powers of $\CM$) and a definable relation $R \subseteq S_1\times \dotsb \times S_k$.

\begin{ntn}
  \begin{enumerate}[(a)]
  \item Let $\vec S=S_1\times\dotsb\times S_k$.
  \item We call a subset $X\subseteq \vec S$ \emph{a rectangular subset} if it is of the form
    $X=X_1\times \dotsb X_k$.
  \item For $A\subseteq M$ and a finite set of formulas
    $\vec \Delta=\{ \Delta_i(x_i, y_i), i=1,\dotsc,k\}$, a rectangular subset
    $X=X_1\times \dotsb X_k$ is called $\vec \Delta$-definable over $A$ if each $X_i$ is a finite
    Boolean combination of sets from $\{ \Delta_i(x_i,a) \colon a\in A \}$.  (In fact we will need
    only conjunctions of $\Delta_i$ and their negations, i.e.\  partial $\vec \Delta$-types.)
  \item Given Keisler measures $\mu_i$ on each sort $S_i$, for a rectangular definable
    $X=X_1\times\dotsb\times X_k$ we set
    \[ \mu(X)=\mu_1(X_1)\cdot\mu_2(X_2)\cdot\dotsc\cdot\mu_k(X_k). \]
  \item By \emph{a rectangular definable partition of $\vec S$} we mean a finite partition $\CP$ of
    $\vec S$ consisting of rectangular definable sets.
  \item For rectangular definable partitions $\CP,\CP_1$ of $\vec S$ we write $\CP \sqsubset \CP_1$
    if $\CP$ refines $\CP_1$, namely for each $X\in \CP$ there is $Y\in \CP_1$ with $X\subseteq Y$.
  \item Given Keisler measures $\mu_i$ on each sort $S_i$, for a rectangular definable partition
    $\CP$ of $\vec S$, we define \emph{the defect} of $\CP$ to be
    \[ \dd(\CP) := \sum_{ \substack{X\in \CP \\ X \text{ is not $R$-homogeneous} }} \mu(X). \]
    Obviously, if $\CP_1 \sqsubset \CP$ then $\dd(\CP_1)\leq \dd(\CP)$.
  \end{enumerate}
\end{ntn}

\begin{prop}\label{prop: partitioned regularity} There is some constant 
  $c=c \left(R \right)$ such that: for any $\varepsilon > 0$ and any generically stable measures $\mu_i$ on
  $S_i$, for $i=1, \ldots, k$, there is a rectangular uniformly definable (in terms of $R$ and
  $\varepsilon$) partition $\CP$ of $\vec S$ with
  $|\CP| \leq \left( \frac{1}{\varepsilon} \right)^c $ and $\dd(\CP) \leq \varepsilon$.

\end{prop}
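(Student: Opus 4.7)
The plan is to iteratively refine the trivial partition $\CP_0=\{\vec S\}$ by applying the strong Erd\H{o}s--Hajnal property for hypergraphs (Corollary~\ref{DensityForGraphs}) to each non-$R$-homogeneous cell. Each round removes a constant fraction of the defect while multiplying the cell count by at most $2^k$, so after $O(\log(1/\varepsilon))$ rounds the defect drops below $\varepsilon$ and the total cell count is polynomial in $1/\varepsilon$.

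Fix $\delta=\delta(R)>0$ and formulas $\psi_i(x_i,z_i)$ provided by Corollary~\ref{DensityForGraphs}. Given a rectangular cell $X=X_1\times\dotsb\times X_k$ with all $\mu_i(X_i)>0$ that is not $R$-homogeneous, localize each $\mu_i$ to $X_i$ using Remark~\ref{rem: localizing generic stability}; the localizations remain generically stable, so Corollary~\ref{DensityForGraphs} applied to $R\cap X$ yields a homogeneous subrectangle $Y=Y_1\times\dotsb\times Y_k\subseteq X$, each $Y_i$ defined by an instance of $\psi_i$, with $\mu_i(Y_i)\geq \delta\,\mu_i(X_i)$ and hence $\mu(Y)\geq \delta^k\mu(X)$. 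Refine $X$ into the $2^k$ rectangular pieces obtained by partitioning each coordinate $X_i$ into $Y_i$ and $X_i\setminus Y_i$; the distinguished piece $Y$ is homogeneous, while the remaining $2^k-1$ pieces contribute total measure at most $(1-\delta^k)\mu(X)$ to the new defect.

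Applying this simultaneously to every non-homogeneous cell of $\CP_t$ (and leaving homogeneous cells untouched) yields $\CP_{t+1}$ with $\dd(\CP_{t+1})\leq (1-\delta^k)\dd(\CP_t)$ and $|\CP_{t+1}|\leq 2^k|\CP_t|$. Setting $T=\lceil\log(1/\varepsilon)/\log(1/(1-\delta^k))\rceil$, the final partition $\CP:=\CP_T$ satisfies $\dd(\CP)\leq \varepsilon$ and $|\CP|\leq 2^{kT}\leq (1/\varepsilon)^{c}$ with $c=k\log 2/\log(1/(1-\delta^k))$ depending only on $R$. Uniform definability is preserved throughout: each $i$-th coordinate of a cell of $\CP_T$ is a Boolean combination of at most $T$ instances of the single formula $\psi_i(x_i,z_i)$, and $\psi_i$ is determined by $R$ alone.

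The main subtlety is the bookkeeping of the refinement: one must verify that splitting each coordinate independently preserves the rectangular structure (which it does, since the product of the one-dimensional partitions $(Y_i, X_i\setminus Y_i)$ is a rectangular partition of $X$), and handle degenerate cells where $\mu_i(X_i)=0$ for some $i$ by simply skipping them (they satisfy $\mu(X)=0$ and so contribute nothing to the defect regardless of homogeneity). Keeping careful track of the formulas produced by Corollary~\ref{DensityForGraphs} at each stage ensures the partition is uniformly definable in terms of $R$ and $\varepsilon$.
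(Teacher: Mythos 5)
Your proposal is correct and follows essentially the same route as the paper: iteratively refine each non-homogeneous rectangular cell by applying the definable strong Erd\H{o}s--Hajnal property for hypergraphs to the localized (still generically stable) measures, so the defect shrinks by a factor $1-\delta^k$ per round while the cell count grows by a bounded factor, giving the polynomial bound after $O(\log\frac{1}{\varepsilon})$ rounds. The only difference is cosmetic: you split a bad cell into the $2^k$ pieces of the full product partition $\prod_i\{Y_i, X_i\setminus Y_i\}$, whereas the paper uses a telescoping decomposition into $k+1$ rectangular pieces, and both yield the same final exponent $c$.
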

We give a proof of the proposition in several claims, essentially following the proof of
\cite[Theorem 1.3]{fox2015polynomial} but working with Keisler measures.

Using Proposition~\ref{prop: DensityRamseyHypergraphs} we know that the following holds.

\begin{claim}\label{thm:main}
  There is a constant $\delta=\delta(R)$, and formulas $\Delta_i(x_i, y_i), i=1,\dotsc,k$ such that
  for any generically stable measures $\mu_i$ on $S_i$ there are $a_i, i=1,\dotsc,k$, such that the
  sets $X_i\subseteq S_i$ defined by $\Delta_i(x_i,a_i)$ are $R$-homogeneous and
  $\mu_i(X_i)\geq \delta$.

\end{claim}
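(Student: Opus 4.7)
The plan is to invoke Corollary~\ref{DensityForGraphs} directly, since the statement of the Claim is essentially a reformulation of that corollary in the notation of the present section. Corollary~\ref{DensityForGraphs}, obtained in Section~\ref{sec: density distal Ramsey} as a consequence of the density hypergraph version of the strong Erd\H{o}s-Hajnal property (Proposition~\ref{prop: DensityRamseyHypergraphs}), provides for any definable relation $R \subseteq S_1 \times \dotsb \times S_k$ a constant $\delta = \delta(R) > 0$ such that for any generically stable measures $\mu_i$ on $S_i$ there exist uniformly definable sets $A_i \subseteq S_i$ with $\mu_i(A_i) \geq \delta$ and with the rectangle $\prod_i A_i$ being $R$-homogeneous (either entirely contained in $R$ or disjoint from $R$).

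To extract the formulas $\Delta_i(x_i, y_i)$ required by the Claim, I would unpack the uniformity statement: the phrase ``uniformly definable in terms of $R$'' appearing in Corollary~\ref{DensityForGraphs} means precisely that there exist formulas $\Delta_i(x_i, y_i)$ depending only on $R$ (and not on the particular measures $\mu_i$) such that the homogeneous sets produced by the corollary can always be taken to be instances $A_i = \Delta_i(M, a_i)$ for appropriate parameters $a_i \in M^{|y_i|}$. Setting $X_i = \Delta_i(M, a_i)$ then yields exactly what the Claim asks for.

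There is no substantive obstacle at this step: the real work has already been done in Section~\ref{sec: density distal Ramsey}, where the density version of the distal Erd\H{o}s-Hajnal theorem was bootstrapped from the bipartite case (Theorem~\ref{thm:delta-main}) via Pach--Solymosi-style iteration and then extended to hypergraphs by induction on arity. The present Claim is merely a convenient repackaging of that conclusion, singling out a fixed finite collection of formulas $\Delta_i$ which will subsequently be used to generate the $\vec \Delta$-definable rectangular partitions driving the iterative refinement argument behind Proposition~\ref{prop: partitioned regularity}.
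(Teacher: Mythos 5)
Your proposal matches the paper: the Claim is stated there as an immediate consequence of Proposition~\ref{prop: DensityRamseyHypergraphs} (equivalently, of its repackaging in Corollary~\ref{DensityForGraphs}), with the formulas $\Delta_i$ coming from the ``uniformly definable in terms of $R$'' clause, exactly as you describe. The only cosmetic points are that the sorts $S_i$ are definable subsets of powers of $M$ (handled by viewing the measures as concentrated on them) and that the finitely many candidate formulas arising from the $R$/$\neg R$ cases are coded into a single $\Delta_i$, both routine and consistent with the paper's treatment.
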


\begin{rem}
  Since we are going to keep track of the parameters used in $\Delta_i$ it is more convenient to
  assume that in the above claim $y_1=\dotsb=y_k=y$ and $a_1=\dotsb=a_k=a$. It can be always
  achieved by a concatenation of variables.
\end{rem}

We fix $\delta$ from the previous claim and let $\vec\Delta=\{ \Delta_i(x_i,y), i=\dotsc,k\}$, where
$\Delta_i$ are from the above claim.

\begin{claim}\label{claim:1}
  Let $X$ be a definable rectangular subset of $\vec S$ with $\mu(X) > 0$. Then there is some
  $a \in M^{|y|}$ and a rectangular set $Y$ which is $\vec \Delta$-definable over $\{ a\}$, such
  that $X\cap Y$ is $R$-homogeneous and $\mu(X\cap Y)\geq \delta^k\mu(X)$.
\end{claim}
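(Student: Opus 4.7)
The plan is to reduce Claim \ref{claim:1} to Claim \ref{thm:main} by localizing each coordinate measure to the corresponding slice of $X$. Write $X = X_1 \times \dotsb \times X_k$; since $\mu(X) = \prod_i \mu_i(X_i) > 0$, each $\mu_i(X_i) > 0$, so by Remark \ref{rem: localizing generic stability} the localized measures $\nu_i(Z) := \mu_i(Z \cap X_i)/\mu_i(X_i)$ are again generically stable measures on $S_i$.

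Next, apply Claim \ref{thm:main} to the tuple $(\nu_1, \dotsc, \nu_k)$. This produces a single parameter $a \in M^{|y|}$ and sets $Y_i := \Delta_i(S_i, a)$ such that $\nu_i(Y_i) \geq \delta$ for every $i$ and the rectangular set $Y := Y_1 \times \dotsb \times Y_k$ is $R$-homogeneous. By construction $Y$ is $\vec\Delta$-definable over $\{a\}$ (the only Boolean combinations needed are the formulas $\Delta_i(x_i,a)$ themselves), which is what the claim requires.

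Finally, unfolding the definition of $\nu_i$ gives $\mu_i(X_i \cap Y_i) \geq \delta\, \mu_i(X_i)$ for each $i$. The intersection $X \cap Y = (X_1 \cap Y_1) \times \dotsb \times (X_k \cap Y_k)$ is a rectangular subset of the $R$-homogeneous set $Y$, hence itself $R$-homogeneous, and
\[
\mu(X \cap Y) = \prod_{i=1}^{k} \mu_i(X_i \cap Y_i) \;\geq\; \prod_{i=1}^{k} \delta \, \mu_i(X_i) \;=\; \delta^k \mu(X).
\]
There is no real obstacle here: all the work has been absorbed into the hypergraph density statement (Claim \ref{thm:main}, i.e.\ Proposition \ref{prop: DensityRamseyHypergraphs}) and the fact that generic stability is preserved under localization, so the proof is essentially bookkeeping once one applies Claim \ref{thm:main} to the right measures.
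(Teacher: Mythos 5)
Your proof is correct and follows exactly the paper's argument: the paper's own proof is the one-line instruction to apply Claim~\ref{thm:main} to the measures $\mu_i$ relativized to the sets $X_i$, which is precisely your localization step, and the rest of your write-up is the routine bookkeeping that the paper leaves implicit.
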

\begin{proof}
  Apply Claim~\ref{thm:main} to measures $\mu_i$ relativized to the sets $X_i$.
\end{proof}

\begin{claim}\label{claim:2}
  Let $\CP$ be a rectangular partition of $\vec S$ which is $\vec \Delta$-definable over a finite set
  $A$. Then there is a rectangular partition $\CP_1$ which is $\vec \Delta$-definable over a finite set $A_1$
  with
  \begin{enumerate}
  \item $|\CP_1|\leq (k+1)|\CP|$,
  \item $|A_1| \leq |A|+|\CP|$,
  \item $\dd (\CP_1) \leq (1-\delta^k)\dd(\CP)$.
  \end{enumerate}
\end{claim}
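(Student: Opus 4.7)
The plan is to refine each non-$R$-homogeneous piece of $\CP$ by invoking Claim~\ref{claim:1}, then subdivide it into $k+1$ rectangular pieces via a standard ``first deviating coordinate'' trick so that the cell count grows only linearly in $k$.

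First, for each rectangular piece $X = X_1 \times \dotsb \times X_k \in \CP$ that is not $R$-homogeneous, I would apply Claim~\ref{claim:1} to the measures $\mu_i$ localized to $X_i$ (these localizations remain generically stable by Remark~\ref{rem: localizing generic stability}). This produces a parameter $a_X \in M^{|y|}$ and a rectangular set $Y_X = Y_{X,1} \times \dotsb \times Y_{X,k}$ with each $Y_{X,i}$ defined by $\Delta_i(x_i, a_X)$, such that $X \cap Y_X$ is $R$-homogeneous and $\mu(X \cap Y_X) \geq \delta^k \mu(X)$.

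Next I would partition $X$ into $k+1$ rectangular pieces, namely the ``good'' piece $X \cap Y_X = \prod_{i=1}^{k} (X_i \cap Y_{X,i})$, together with, for each $j=1, \dotsc, k$, the piece
\[
(X_1 \cap Y_{X,1}) \times \dotsb \times (X_{j-1} \cap Y_{X,j-1}) \times (X_j \setminus Y_{X,j}) \times X_{j+1} \times \dotsb \times X_k.
\]
These $k$ latter pieces form a rectangular disjoint decomposition of $X \setminus Y_X$, indexed by the smallest coordinate at which an element leaves $Y_X$, and each is $\vec\Delta$-definable over $A \cup \{a_X\}$. Taking $\CP_1$ to be the union of the already $R$-homogeneous pieces of $\CP$ with the above $k+1$ subpieces for each non-homogeneous $X \in \CP$, and $A_1 = A \cup \{ a_X : X \in \CP \text{ not $R$-homogeneous} \}$, the bounds $|\CP_1| \leq (k+1)|\CP|$ and $|A_1| \leq |A|+|\CP|$ are immediate.

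Finally, for the defect estimate, the $R$-homogeneous members of $\CP$ contribute nothing to either $\dd(\CP)$ or $\dd(\CP_1)$. For each non-homogeneous $X \in \CP$, the good piece $X \cap Y_X$ is $R$-homogeneous and hence contributes nothing to $\dd(\CP_1)$, while the remaining $k$ pieces together have total measure at most $\mu(X) - \mu(X \cap Y_X) \leq (1-\delta^k)\mu(X)$; summing over non-homogeneous $X$ yields $\dd(\CP_1) \leq (1-\delta^k)\dd(\CP)$. The only non-routine point is the choice of the $k+1$-piece decomposition of $X$ (rather than the naive $2^k$-piece one), which is needed to match the required bound on $|\CP_1|$; everything else is bookkeeping.
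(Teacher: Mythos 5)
Your proof is correct and follows essentially the same route as the paper: refine each non-homogeneous rectangle by Claim~\ref{claim:1} applied to the localized measures, split it into $k+1$ rectangular pieces using one extra parameter, and telescope the measure to get the $(1-\delta^k)$ defect decay. The only (immaterial) difference is that you index the non-homogeneous pieces by the first coordinate leaving $Y_X$, while the paper uses the mirror-image decomposition.
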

\begin{proof} Let $X \in \mathcal{P}$ be non-homogeneous. We can partition it into  $(k+1)$ rectangular subsets, all of them $\vec \Delta$-definable using one extra parameter $a_X$, such that one of these subsets is $R$-homogeneous and is of measure at least $\delta^k \mu(X)$. Namely, if $X = \prod_{i=1}^{k} X_i$, by Claim \ref{claim:1} there are some $Y_i \subseteq X_i$ such that $Y= \prod_{i=1}^{k} Y_i$ is an $R$-homogeneous subset of $X$ with $\mu(Y) \geq \delta^k \mu(X)$,  and we take a partition of $X$ into $(k+1)$ pieces given by the sets $Y_1 \times \ldots Y_k$ and $X_1 \ldots \times X_{i-1} \times (X_i \setminus Y_i) \times Y_i \times \ldots \times Y_k$ for all $i=1, \ldots, k$.

Replacing each non-homogeneous $X \in \mathcal{P}$ with such a sub-partition we obtain $\CP_1$ satisfying the requirements.
\end{proof}

Thus, by induction on $n$ we can construct a rectangular partition $\CP_n$ of $\vec S$ which is 
$\vec \Delta$-definable over a finite set $A_n$ and such that
\begin{enumerate}
\item $\dd(\CP_n)\leq (1-\delta^k)^n$,
\item $|\CP_n|\leq (k+1)^n$,
\item $|A_n| \leq \sum_{j<n} (k+1)^j =\dfrac{ (k+1)^n -1}{(k+1)-1}\leq (k+1)^n$.
\end{enumerate}

In particular, given $\varepsilon >0$, using $(1)$ and $(2)$ above, after
$N=\dfrac{\log{\varepsilon}}{\log{(1-\delta^k)}}= \left( - \frac{1}{\log(1-\delta^k)} \right) \log{\frac{1}{\varepsilon}}$ steps we have
$\dd(\CP_N) \leq \varepsilon$ with $|\mathcal{P}_N|\leq (k+1)^N \leq 2^{kN} \leq (\frac{1}{\varepsilon})^{c}$, where
$c = - \frac{k}{\log(1 - \delta^k)}$ is a positive constants depending only on $R$.

This finishes the proof of Proposition~\ref{prop: partitioned regularity}.

\medskip

From the above $k$-partite version we obtain a regularity lemma for hypergraphs.
\begin{thm}[Distal regularity lemma]\label{thm: distal regularity lemma} Let $P \subseteq M^{d}$ be
  a definable set and $R(x_1, \ldots, x_k)$ with $|x_i| = d$ for all $1 \leq i \leq k$ be a
  definable relation. Then there is some constant $c = c(R)$ such that the following holds. 

  For any $\varepsilon > 0$ and for any generically stable measure $\mu$ on $P$, there is a partition
  $P = P_1 \cup \ldots \cup P_K$ with $K = O \left( (\frac{1}{\varepsilon})^c \right)$ such that
  $P_i$'s are uniformly definable (in terms of $R$ and $\varepsilon$) and
 $$\sum \mu(P_{i_1} ) \ldots \mu(P_{i_k}) \leq \varepsilon, $$
 where the sum is over all tuples $(i_1, \ldots, i_k)$ such that $(P_{i_1}, \ldots, P_{i_k})$ is not
 $R$-homogeneous.
\end{thm}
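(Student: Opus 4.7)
The plan is to derive this hypergraph regularity lemma from the $k$-partite rectangular version (Proposition \ref{prop: partitioned regularity}) by taking a common refinement on a single copy of $P$, with the size of this refinement controlled via the polynomial type-counting bound for NIP theories.

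First, I would apply Proposition \ref{prop: partitioned regularity} with $S_1 = \dotsb = S_k = P$ and all measures equal to $\mu$, obtaining a rectangular partition $\mathcal{P}$ of $P^k$ with $|\mathcal{P}| \leq (1/\varepsilon)^{c_1}$ and $\dd(\mathcal{P}) \leq \varepsilon$. Tracking the inductive construction in the proof of that proposition, every coordinate set of every piece of $\mathcal{P}$ is a Boolean combination of instances $\Delta_i(x, a)$ with $a$ in a common parameter set $A \subseteq M^{|y|}$ of size at most $(1/\varepsilon)^{c_1}$, where the formulas $\Delta_i$ come from Claim \ref{thm:main} and depend only on $R$. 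Setting $\Delta = \{\Delta_1, \ldots, \Delta_k\}$, I would then take $\{P_1, \ldots, P_K\}$ to be the partition of $P$ into its complete $\Delta$-types over $A$. Since $\CM$ is NIP (distal theories being NIP by Remark \ref{srem:distal-one}), Fact \ref{fac: PolyTypesNIP} gives $K = O(|A|^{d})$ for some $d$ depending only on the VC-dimensions of the $\Delta_i$; combined with the bound on $|A|$, this yields $K = O((1/\varepsilon)^c)$ for a constant $c = c(R)$. Each $P_j$ is defined by a conjunction of instances of the $\Delta_i$ and their negations, hence uniformly definable in terms of $R$ and $\varepsilon$.

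By construction each coordinate set appearing in a piece of $\mathcal{P}$ is a union of complete $\Delta$-types over $A$, so every product rectangle $P_{i_1} \times \dotsb \times P_{i_k}$ lies inside a single piece of $\mathcal{P}$; if the enclosing piece is $R$-homogeneous, so is the rectangle. Hence non-$R$-homogeneous tuples contribute only inside non-homogeneous pieces of $\mathcal{P}$, and
$$\sum_{(i_1, \ldots, i_k) \text{ non-hom}} \mu(P_{i_1}) \cdots \mu(P_{i_k}) \leq \dd(\mathcal{P}) \leq \varepsilon.$$
The main — and essentially only — obstacle is ensuring that $K$ stays polynomial in $1/\varepsilon$: a naive Boolean refinement along the $(1/\varepsilon)^{c_1}$ parameters of $A$ a priori produces $2^{(1/\varepsilon)^{c_1}}$ atoms, and it is precisely the polynomial bound on the number of $\Delta$-types in NIP theories that collapses this potential exponential blow-up to the desired polynomial in $1/\varepsilon$.
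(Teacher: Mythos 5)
Your proposal is correct and follows essentially the same route as the paper: apply Proposition \ref{prop: partitioned regularity} with all sorts equal to $P$, partition $P$ into complete $\vec\Delta$-types over the parameter set accumulated in its inductive construction, bound the number of parts via Fact \ref{fac: PolyTypesNIP}, and observe that the induced product partition refines the rectangular one so the non-homogeneous contribution is bounded by its defect $\leq \varepsilon$. Your closing remark about the NIP type-counting bound collapsing the potential exponential blow-up is exactly the point the paper's proof relies on.
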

\begin{proof}
  Let $\CP_N, A_N, c, \vec \Delta$ be as given by the proof of Proposition~\ref{prop: partitioned regularity}
  for $S_i = P$ and $\mu_i = \mu$, for all $1 \leq i \leq k$.

  Using Fact~\ref{fac: PolyTypesNIP}, we obtain constants $c_2$ and $c_3$ depending only on $R$ and such that the number of
  $\vec \Delta$-types over any finite set $A$ is bounded by $c_2|A|^{c_3}$.  Finally, we partition
  $P$ into realizations of complete $\vec \Delta$-types over $A_N$, say $P = \bigcup_{i \leq K} P_i$.
  It follows that there will be at most $c_2 (\frac{1}{\varepsilon})^{c c_3}$ parts. It is easy to
  see that this partition of $P$ satisfies the homogeneity condition because the rectangular
  partition
  $$\CP := \left\{ P_{i_1} \times \cdots \times P_{i_k} : 1 \leq i_1, \ldots, i_k \leq K \right\}$$
  refines $\CP_N$ and $\dd(\CP_N) \leq \varepsilon$.
\end{proof}

\subsection{Finding definable equipartitions}
Normally in the conclusion of a regularity lemma one is able to choose parts of (approximately)
equal measure. We give a sufficient condition for this in the definable setting. To simplify some
expressions, given real numbers $r_1, r_2$ and $\varepsilon >0$, we write
$r_1 \approx^{\varepsilon} r_2$ to denote that $|r_1 - r_2| < \varepsilon$.

\begin{defn}\label{def: uniformly cutting finite sets}
  We say that a structure $\CM$ \emph{uniformly cuts finite sets} if for every formula $\phi(x,y)$
  and every $\varepsilon > 0$ there is a formula $\chi(x,z)$ such that for any sufficiently large
  finite set $A \subseteq M^{|x|}$, any $b \in M^{|y|}$ and any $0 \leq m \leq |\phi(A,b)|$ there is
  some $c \in M^{|z|}$ such that
  $\frac{|\phi(A,b) \cap \chi(A,c)|}{|\phi(A,b)|} \approx^{\varepsilon} \frac{m}{|\phi(A,b)|}$.
\end{defn}
Note that this is a property of $\Th(\CM)$.

\begin{sample}
  \begin{enumerate}
  \item Assume that there is a definable linear order $x<y$ on $\CM$. Then clearly $\CM$ uniformly
    cuts finite sets and $\chi$ in Definition~\ref{def: uniformly cutting finite sets} can be chosen
    independently of $\phi$ and $\varepsilon$ (using lexicographic ordering for subsets of $M^n$ for
    $n>1$).

  \item Let $\CM = (\mathbb{Q}_p,+,\times)$, then $\CM$ uniformly cuts finite sets. For subsets of
    $M$ this follows from the fact that every ball in $\mathbb{Q}_p$ is a disjoint union of exactly
    $p$ balls, using which an argument similar to the proof that for an atomless measure, every set
    of positive measure contains subsets of arbitrary smaller measure, can be carried out up to
    $\varepsilon$, in a number of steps bounded in terms of $\varepsilon$ (one can check using
    quantifier elimination in the $p$-adics that in this case $\chi$ cannot be chosen independently of
    $\varepsilon$). To extend this to subsets of $M^n$ for $n>1$, note that if $k$ is an infinite
    field and $A\subseteq k^n$ is a finite set then there is a uniformly definable linear map $f\colon k^n\to k$ that is
    one-to-one on $A$.

  \end{enumerate}
\end{sample}

\begin{prop}\label{prop: cutting gen stable measures} Let $\CM$ be a distal structure and assume
  that it uniformly cuts finite sets. Then for every formula $\phi(x,y)$ and $\delta > 0$ there is
  some $\chi(x,z)$ such that: for any generically stable measure $\mu$ on $\CM$ with
  $\mu(\{ c \}) = 0$ for any singleton $c \in M^{|x|}$, if $ 0\leq \alpha \leq \mu(\phi(x,a))$ then
  we can find some $b \in M^{|z|}$ with $\mu(\phi(x,a) \cap \chi(x,b)) \approx^{\delta} \alpha$.
\end{prop}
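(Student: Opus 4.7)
The plan is to reduce the problem to a combinatorial cutting question on a sufficiently large finite set via a VC-style approximation of $\mu$, perform the cut using the hypothesis on $\CM$, and transfer back. Fix $\phi(x,y)$ and $\delta > 0$, and set $\varepsilon = \delta/3$. Let $\chi(x,z)$ be the formula supplied by Definition \ref{def: uniformly cutting finite sets} applied to $\phi$ with precision $\varepsilon$; this is the formula we will output.

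I would apply Proposition \ref{UniformEpsilonNetGenStable} together with Remark \ref{VCforSetsOfFormulas} to the finite family of formulas $\{\phi(x,y),\, \phi(x,y)\wedge\chi(x,z)\}$ with error $\varepsilon$. This yields some $n$ such that for every generically stable $\mu$ there exist $a_1,\ldots,a_n \in M^{|x|}$ that $\varepsilon$-approximate $\mu$ simultaneously on every instance of $\phi$ and of $\phi\wedge\chi$. Under the non-atomic hypothesis on $\mu$ (which is moreover smooth, since $\CM$ is distal), a standard argument lets us take the $a_i$ pairwise distinct; we also arrange $1/n < \varepsilon$ by further shrinking the VC error if needed. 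Set $A := \{a_1,\ldots,a_n\}$, a genuine set of size $n$.

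Given $a \in M^{|y|}$ and $\alpha \leq \mu(\phi(x,a))$, write $N := |\phi(A,a)|$; the VC approximation yields $N/n \geq \alpha - \varepsilon$. Choose an integer $m \in [0,N]$ with $m/n$ as close to $\alpha$ as possible: the nearest integer to $\alpha n$ when $\alpha n \leq N$, and $m = N$ otherwise; in both cases $|m/n - \alpha| < \varepsilon$. Applying the uniform cutting hypothesis to $\phi$, $A$, $a$, and $m$ yields some $b \in M^{|z|}$ with $|\phi(A,a)\cap\chi(A,b)|/N \approx^{\varepsilon} m/N$, so after multiplying by $N/n \leq 1$, also $|\phi(A,a)\cap\chi(A,b)|/n \approx^{\varepsilon} m/n$. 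A second use of the VC approximation, this time on $\phi\wedge\chi$, gives $\mu(\phi(x,a)\cap\chi(x,b)) \approx^{\varepsilon} |\phi(A,a)\cap\chi(A,b)|/n$. Chaining the three approximations produces $\mu(\phi(x,a)\cap\chi(x,b)) \approx^{3\varepsilon} \alpha$, i.e.\ within $\delta$, as required.

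The main technical point is the boundary case when $\alpha$ is very close to $\mu(\phi(x,a))$: here $\alpha n$ may slightly overshoot $N$, but the VC approximation bounds this overshoot by $\varepsilon n$, so truncating at $m = N$ still leaves $m/n$ within $\varepsilon$ of $\alpha$. The no-atoms hypothesis is used precisely to ensure that the VC sample can be realised as a set of $n$ distinct points, so that the uniform cutting property (which in Definition \ref{def: uniformly cutting finite sets} is stated for sets rather than multisets) applies directly.
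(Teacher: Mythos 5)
Your overall route is the same as the paper's: produce one finite set whose normalized counting measure approximates $\mu$ \emph{uniformly over all parameters} on the instances of $\phi(x,y)$ and of $\phi(x,y)\wedge\chi(x,z)$ (this uniformity is indeed the crucial point, since the parameter $b$ is only found after the sample is fixed, and you do have it via Proposition~\ref{UniformEpsilonNetGenStable} and Remark~\ref{VCforSetsOfFormulas}), then cut the finite set using Definition~\ref{def: uniformly cutting finite sets} and transfer back. Your error bookkeeping ($3\varepsilon$ versus the paper's $4\varepsilon$), the choice of the integer $m$, and the treatment of the overshoot case $\alpha n > N$ are all fine, and the quantifier order is respected ($\chi$ depends only on $\phi$ and $\delta$).

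The gap is the sentence ``a standard argument lets us take the $a_i$ pairwise distinct.'' Proposition~\ref{UniformEpsilonNetGenStable} produces a finite \emph{sequence}, typically with many repetitions (its proof samples from the finite support given by Fact~\ref{GenStable}(2), with multiplicities), and converting it into a genuine set is exactly where the paper's proof spends its effort; it is not a routine de-duplication. Discarding duplicates changes the averages uncontrollably, and replacing a repeated point by a nearby distinct one is not available either: the replacement would have to reproduce the behaviour of the old point at \emph{every} parameter simultaneously (in a dense linear order distinct points already have distinct $\phi$-types for $\phi(x,y)\equiv x<y$), and since the relevant parameter $b$ is produced later by the cutting hypothesis, no ``small exceptional set of parameters'' argument applies, while the per-point error $1/n$ accumulates over the possibly many moved points. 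The paper handles this by noting that distality makes $\mu$ smooth (Fact~\ref{fac: distal iff gen stable is smooth}) and invoking the smoothness characterization (Fact~\ref{non-uniform epsilon-net for smooth}): every instance of $\phi$ and of $\phi\wedge\chi$ is sandwiched between two of finitely many fixed definable sets of almost equal measure, so it suffices to approximate $\mu$ on the atoms of the finite Boolean algebra these generate; and since $\mu(\{c\})=0$ forces every definable set of positive measure to be infinite, one can draw the required number of \emph{distinct} points from each atom. You name the right hypotheses (no atoms, smoothness), but the key step is asserted rather than proved; once you replace it by this sandwiching argument, your proof goes through and essentially coincides with the paper's.
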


\begin{proof}  
  Fix $\varepsilon > 0$ arbitrary, and let $\chi(x,z)$ be an arbitrary formula. As $\CM$ is distal,
  it follows by Fact~\ref{fac: distal iff gen stable is smooth} that $\mu$ is smooth over $M$. Let
  $\theta_i^1(x), \theta_i^2(x), \psi_i(y,z)$, $i=1, \ldots, n$ list all of the formulas over $M$
  given by Fact~\ref{non-uniform epsilon-net for smooth} for each of $\phi(x,y)$ and
  $\phi(x,y) \land \chi(x,z)$, with respect to $\mu$ and $\varepsilon$.  Let $\mathcal{B}$ be the
  finite Boolean algebra of subsets of $M^{|x|}$ generated by
  $\Theta = \{ \theta^t_i(M) : i=1, \ldots, n, t=1,2\}$. Clearly the number of atoms in $\mathcal{B}$ is at most $4^{n}$. By
  assumption every definable set of positive $\mu$-measure is infinite. Then for all sufficiently
  large $m \in \mathbb{N}$ we can choose a set $C \subseteq M^{|x|}, |C|=m$ such that for every atom
  $A$ of $\mathcal{B}$, $| \frac{|C \cap A|}{m} - \mu(A) | < \frac{\varepsilon}{2 \cdot 4^n}$. It
  then follows that for every $i \in \{1, \ldots, n\}, t \in \{1,2 \}$ we have
  $ \frac{|\theta^t_i(C)|}{|C|} \approx^{\frac{\varepsilon}{2}} \mu(\theta^t_i(M))$. But by the
  choice of $\theta^0_i, \theta^1_i$ this implies that for any set $D$ from
  $\Delta(M) = \{ \phi(M,a) : a \in M \} \cup \{ \phi(M,a) \cap \chi(M,b) : a, b \in M \}$ we have
  $\frac{|D \cap C|}{|C|} \approx^{\varepsilon} \mu(D)$.

  Now let $0<\alpha < \beta := \mu(\phi(M,a))$ be given (if $\alpha \in \{ 0, \beta \}$ then there
  is nothing to do). Let $\varepsilon := \frac{\delta}{4}$, and let $\chi(x,z)$ be as given by
  Definition~\ref{def: uniformly cutting finite sets} for $\phi(x,y)$ and $\varepsilon$. Take $m$
  sufficiently large (to be specified later), then for $C$ with $|C| = m$ chosen as above with
  respect to $\varepsilon$ and $\chi$ we have in particular
  $\frac{|\phi(C,a)|}{|C|} \approx^{\varepsilon} \beta$. Let $l := |\phi(C,a)|$. We may assume that
  there is some $k \in \mathbb{N}, k \leq l$ such that
  $\frac{k}{l} \approx^{\varepsilon} \frac{\alpha}{ \beta}$ (as $\alpha > 0$, by choosing $m$
  sufficiently large we may assume that $l$ is arbitrarily large). Then, using that both $\beta, \frac{k}{l} \leq 1$ we obtain 
  $\alpha \approx^{\varepsilon} \beta \frac{k}{l} \approx^{\varepsilon} \frac{l}{m} \frac{k}{l} =
  \frac{k}{m}$, so $\alpha \approx^{2 \varepsilon} \frac{k}{m}$.
  
  By the choice of $\chi(x,z)$, there is some $b \in M^{|z|}$ such that
  $\frac{|\phi(C,a) \cap \chi(C,b)|}{l} \approx^{\varepsilon} \frac{k}{l}$, which implies
  $\frac{|\phi(C,a) \cap \chi(C,b)|}{m} \approx^{\varepsilon} \frac{k}{m}$, and so
  $\frac{|\phi(C,a) \cap \chi(C,b)|}{m} \approx^{3 \varepsilon} \alpha$. By the assumption on $C$
  this implies that $\mu(\phi(x,a)\land \chi(x,b)) \approx^{4 \varepsilon} \alpha$,
  i.e. $\mu(\phi(x,a)\land \chi(x,b)) \approx^{\delta} \alpha$.

\end{proof}

\begin{rem}\label{rem: definable measure def}
	Recall that a global measure $\mu$ is \emph{definable} over a small model $M$ if it is $\Aut(\UU/M)$-invariant and for every formula $\phi(x,y) \in L$ and every closed subset $X$ of $[0,1]$,  the set $\{ q \in S_{|y|}(M) : \mu(\phi(x,b)) \in X \textrm{ for any } b\in \UU^{|y|}, b\models q(y)\}$ is closed. It is \emph{finitely satisfiable} if for every $\phi(x,b) \in L(\UU)$ with $\mu(\phi(x,b)) > 0$ there is some $a \in M^{|x|}$ such that $\models \phi(a,b)$ holds. As mentioned before, in an NIP structure, a Keisler measure $\mu$ over $\mathbb{M}$ is generically stable if and only if it admits a global $M$-invariant extension which is both  definable over $M$ and finitely satisfiable in $M$ (see \cite[Theorem 3.2]{NIP3}).
\end{rem}

\begin{cor}
  Assume that $T$ uniformly cuts finite sets in such a way that $\chi$ in Definition~\ref{def:
    uniformly cutting finite sets} can be chosen independently of $\varepsilon$ (e.g. if $\mathcal{M}$ has a definable linear order). Then under the assumptions of 
  Proposition~\ref{prop: cutting gen stable measures} we can choose $b \in \UU$ such that
  $\mu_1(\phi(x,a)\cap \chi(x,b)) = \alpha$, where $\mu_1$ is the unique global Keisler measure
  extending $\mu$.
\end{cor}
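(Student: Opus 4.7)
The plan is to apply Proposition 4.? (the one just established) with a sequence of shrinking tolerances and then pass to a limit in the type space, using that $\mu_1$ is definable because $\mu$ is generically stable. Since by hypothesis the formula $\chi(x,z)$ can be chosen independently of $\varepsilon$, we can keep the \emph{same} $\chi$ for every approximation, which is what makes the limit argument go through.

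First, I would fix $\chi(x,z)$ as given by Definition 5.? for the formula $\phi(x,y)$, independently of $\varepsilon$. Applying Proposition~4.? (on cutting generically stable measures) with $\delta = 1/n$ for each $n \in \mathbb{N}$, I obtain parameters $b_n \in M^{|z|}$ such that
\[
|\mu(\phi(x,a)\cap\chi(x,b_n)) - \alpha| < 1/n.
\]
Since $\mu_1$ extends $\mu$, we may equally write $\mu_1(\phi(x,a)\cap\chi(x,b_n)) \to \alpha$. Let $p_n = \operatorname{tp}(b_n/M) \in S_{|z|}(M)$.

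Next, by compactness of $S_{|z|}(M)$, there is a subnet of $(p_n)$ converging to some $p \in S_{|z|}(M)$. Now I invoke the definability of $\mu_1$ (which holds because $\mu$ is generically stable, by Remark~\ref{rem: definable measure def}): for the formula $\phi(x,y)\wedge\chi(x,z)$, definability says that for every closed $X \subseteq [0,1]$, the set
\[
\{\, q \in S_{|z|}(M) : \mu_1(\phi(x,a)\wedge\chi(x,b)) \in X \text{ for some (any) } b \models q \,\}
\]
is closed in $S_{|z|}(M)$. Equivalently, the function $F : S_{|z|}(M) \to [0,1]$ defined by $F(q) = \mu_1(\phi(x,a)\wedge\chi(x,b))$ (for any realization $b$ of $q$ in $\mathbb{U}$) is continuous. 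Since $F(p_n) \to \alpha$ along the chosen subnet and $p_n \to p$, continuity yields $F(p) = \alpha$. Finally, taking any realization $b \in \mathbb{U}^{|z|}$ of $p$, we conclude $\mu_1(\phi(x,a)\cap\chi(x,b)) = \alpha$, as required.

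The only subtlety is confirming that definability of $\mu_1$ really gives continuity of the measure-valued function $F$ on the type space; this is essentially the content of the definability clause in Remark~\ref{rem: definable measure def}, applied to the formula $\phi(x,a)\wedge\chi(x,z)$ (absorbing the parameter $a$ into the base). The rest is a standard compactness-and-limit argument, which is why it is crucial that $\chi$ does not depend on the approximation parameter $\varepsilon$---otherwise the sequence $(p_n)$ would live in different type spaces and no single limiting type in $S_{|z|}(M)$ could be extracted.
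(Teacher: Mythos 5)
Your proposal is correct and is essentially the paper's own argument: both hinge on the definability over $M$ of the unique global extension $\mu_1$ (coming from generic stability) plus a compactness argument applied to the $\delta$-approximations supplied by Proposition~\ref{prop: cutting gen stable measures}. The paper packages this as type-definability over $M$ of the sets $\{ b \in \UU : |\mu_1(\phi(x,a)\wedge\chi(x,b)) - \alpha| \leq \delta \}$ together with saturation of $\UU$, whereas you phrase the same step as continuity of the induced function on $S_{|z|}(M)$ and extraction of a limit type; these are two formulations of the same argument.
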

\begin{proof}
  As $\mu_1$ is generically stable over $\CM$, it is in particular definable over $\CM$. That is,
  for every $\delta > 0$ the set
  $\{ b \in \UU : \alpha - \delta \leq \mu_1(\phi(x,a) \land \chi(x,b)) \leq  \alpha + \delta \}$ is
  type-definable over $\CM$ (and consistent). It then follows by compactness that we can find some
  $b^* \in \UU$ with $\mu_1(\phi(x,a) \land \psi(x,b)) = \alpha$.
\end{proof}

\begin{cor} Let $\CM$ be a distal structure and assume that it uniformly cuts finite sets. Then in
  Theorem~\ref{thm: distal regularity lemma} for any $\mu$ satisfying in addition $\mu(\{ c \}) = 0$
  for all $c \in M^d$ and any $\delta > 0$ we can find a partition $P_1, \ldots, P_K$ with
  $\mu(P_i) \approx^{\delta} \mu(P_j)$ for all $1 \leq i,j \leq K$ (and the parts $P_i$ are
  uniformly definable in terms of $R, \varepsilon, \delta, \chi$).
\end{cor}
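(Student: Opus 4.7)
The plan is to start from the partition given by Theorem~\ref{thm: distal regularity lemma} and then refine each piece into sub-pieces of approximately equal measure by iterating Proposition~\ref{prop: cutting gen stable measures}. Concretely, fix an integer $N > 2/\delta$ and apply Theorem~\ref{thm: distal regularity lemma} with the given $\varepsilon$ to produce a partition $P = P'_1 \cup \cdots \cup P'_L$ satisfying $\sum \mu(P'_{i_1}) \cdots \mu(P'_{i_k}) \leq \varepsilon$ over all non-$R$-homogeneous tuples $(i_1,\dots,i_k)$. Each localization $\mu_{P'_i}$ remains generically stable by Remark~\ref{rem: localizing generic stability}, and atomless by the hypothesis on $\mu$, so Proposition~\ref{prop: cutting gen stable measures} is applicable to $\mu_{P'_i}$.

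Next, for each $P'_i$ of positive measure $\mu_i$, set $n_i := \lceil N\mu_i \rceil$ and iteratively apply Proposition~\ref{prop: cutting gen stable measures} to cut $P'_i$ into $n_i$ sub-pieces of measure approximately $\mu_i/n_i$. At each step the remainder of $P'_i$ after the previous cuts is defined by a Boolean combination of instances of $\chi$ applied to the formulas used at the previous steps; by choosing the error tolerance at each step much smaller than $\delta/N$, the accumulated approximation errors stay below $\delta/N$. Since $n_i \leq N\mu_i + 1$ and $\sum_i \mu_i = 1$, the total number of cuts across all $i$ is bounded by $N + L$, so the complexity of the resulting defining formulas is controlled uniformly in terms of $R$, $\varepsilon$, $\delta$, and $\chi$.

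Collecting all the sub-pieces gives a partition $P_1, \ldots, P_K$. By construction, each sub-piece of $P'_i$ has measure in approximately $(0, 1/N]$ (pieces coming from $P'_i$ with $\mu_i < 1/N$ are left unsubdivided and lie in $(0,1/N)$, while sub-pieces of larger $P'_i$ have measure between roughly $1/(2N)$ and $1/N$ up to the small approximation error), so $|\mu(P_i) - \mu(P_j)| < \delta$ for all $i,j$. For the homogeneity condition, each new piece $P_j$ is a subset of some $P'_{\sigma(j)}$; if a tuple $(P_{j_1}, \ldots, P_{j_k})$ is not $R$-homogeneous, then neither is $(P'_{\sigma(j_1)}, \ldots, P'_{\sigma(j_k)})$, and since the new sub-pieces partition the old ones, summing products of measures shows that the new defect is bounded by the original one, which is $\leq \varepsilon$.

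The main obstacle is combinatorial bookkeeping: controlling the formula complexity and the accumulated approximation errors across the iterative applications of Proposition~\ref{prop: cutting gen stable measures}, since the formula to which we apply the proposition changes at each step. Since the total number of iterations is bounded in terms of $N$ and $L$ (and hence in terms of $\varepsilon$ and $\delta$), the finitely many cutting formulas and their parameters can be encoded into a single formula (with a larger parameter tuple) used to define all the new sub-pieces uniformly in terms of $R$, $\varepsilon$, $\delta$, and $\chi$.
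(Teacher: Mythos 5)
Your construction is internally consistent, but it proves a weaker statement than the corollary intends. The phrase ``in Theorem~\ref{thm: distal regularity lemma}'' means that the conclusions of that theorem are retained, in particular the bound $K = O\left((1/\varepsilon)^c\right)$ on the number of parts, depending only on $R$ and $\varepsilon$; the paper's proof explicitly verifies that its repartitioned family still has polynomially many (in $1/\varepsilon$) parts. In your construction the number of parts is at least $N > 2/\delta$, so it blows up as $\delta \to 0$, and the condition $\mu(P_i) \approx^{\delta} \mu(P_j)$ is obtained degenerately: every part has measure at most $1/N < \delta/2$ (some possibly arbitrarily close to $0$), so any two measures are trivially $\delta$-close. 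The intended conclusion is a genuine near-equipartition: all parts of measure roughly $1/K$ with $K$ controlled by $\varepsilon$ alone, and $\delta$ only measuring the precision of the equality.

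The idea you are missing is the repartition trick (following Fox--Pach--Suk) that the paper uses: apply Theorem~\ref{thm: distal regularity lemma} with $\varepsilon/2$ to get $P_1,\dots,P_K$, set $K' = \lceil 4\cdot 2^k K/\varepsilon\rceil$ (still polynomial in $1/\varepsilon$), and use Proposition~\ref{prop: cutting gen stable measures} to cut each $P_i$ into pieces of measure $\approx 1/K'$ plus a remainder $S_i$ of measure $< 1/K'$. The remainders must then be pooled, $S = \bigcup_i S_i$, and re-cut into pieces of measure $\approx 1/K'$; these pooled pieces are not contained in a single original part, so homogeneity is not inherited for tuples meeting them --- exactly the difficulty that your ``refine each part separately'' strategy sidesteps by letting the part size shrink with $\delta$. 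The paper controls it by noting $\mu(S) < K/K'$, so the tuples involving a pooled piece contribute at most $K (K')^{k-1}\left(1/K' + \delta\right)^{k} \leq 2^k K/K' = \varepsilon/4$ to the defect, giving total defect $< \varepsilon/2 + \varepsilon/4 < \varepsilon$ while keeping the number of parts independent of $\delta$. Your observation that refining preserves the defect bound and your bookkeeping for iterated applications of Proposition~\ref{prop: cutting gen stable measures} are fine as far as they go, but without the pooling step you cannot have both the equipartition and a part count bounded in terms of $\varepsilon$ alone.
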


\begin{proof} 
  We are following the standard repartition argument (see e.g.\  \cite[Proof of Theorem
  1.3]{fox2015polynomial}).
    
  Let $(P,R)$ be a $k$-uniform hypergraph, and let $P = P_1 \cup \ldots \cup P_K$ be a partition of
  its vertices given by Theorem~\ref{thm: distal regularity lemma} for $\frac{\varepsilon}{2}$, with
  $K \leq c_1 \left( \frac{2}{\varepsilon} \right)^{c_2}$. Fix $\delta > 0$ and $\mu$ satisfying the
  assumptions, and we will find a new partition $P = Q_1 \cup \ldots \cup Q_{K'}$ satisfying the
  conclusion of the corollary for $\varepsilon$ and $\delta$.

  Let $K' = \lceil 4 \frac{2^k}{\varepsilon}K \rceil $, without loss of generality
  $0 < \delta < \frac{1}{K'}$, and fix an arbitrary $0 < \delta' < \frac{\delta}{K'}$.  Using
  Proposition~\ref{prop: cutting gen stable measures} we can partition each $P_i$ into
  $P_i = S_i \cup \bigcup Q_{i,j}$ with $\mu(Q_{i,j}) \approx^{\delta'} \frac{1}{K'}$ for all $j$
  and the remainder $\mu(S_i) < \frac{1}{K'}$. Let now $S = \bigcup_{i} S_i$, and again using
  Proposition~\ref{prop: cutting gen stable measures} we can partition $S$ into
  $S = T \cup \bigcup U_j$ with $\mu(U_j) \approx^{\delta'} \frac{1}{K'}$ and the remainder
  $\mu(T) < \frac{1}{K'}$. As $\delta'$ was sufficiently small compared to $\delta$ and
  $\frac{1}{K'}$, calculating the error we get $\mu(T) \approx^{\delta} \frac{1}{K'}$. We claim that
  $P = \bigcup Q_{i,j} \cup \bigcup U_j \cup T$ is the required partition, re-enumerate it as
  $P = Q_1 \cup \ldots \cup Q_{K'}$. We still have that $K'$ is a polynomial in
  $\frac{1}{\varepsilon}$. Note that $\mu(S) < \frac{K}{K'}$, so there are at most $K$ parts of the
  new partition contained in $S$. Hence the sum $\sum \mu(Q_{i_1}) \ldots \mu(Q_{i_k})$ over all
  tuples $(i_1, \ldots, i_k)$ for which not all of $P_{i_1}, \ldots, P_{i_k}$ are subsets of parts
  of the original partition is at most
  $K {\left( K' \right)}^{k-1} \left( \frac{1}{K'} + \delta \right)^{k} \leq K \left( K' \right)^{k-1}
  \frac{2^k}{\left(K' \right)^k} = \frac{2^k K}{K'} = \frac{\varepsilon}{4}$.
  Together with the assumption on the original partition it then follows that
  $\sum \mu(Q_{i_1} ) \ldots \mu(Q_{i_k})$, where the sum is over all tuples $(i_1, \ldots, i_k)$
  such that $(Q_{i_1}, \ldots, Q_{i_k})$ is not $R$-homogeneous, is bounded by
  $\frac{\varepsilon}{2} + \frac{\varepsilon}{4} < \varepsilon$.

  It follows from the construction that the new partition is uniformly definable in terms of the old
  one and $\delta$, and thus uniformly definable in terms of $R, \varepsilon$ and $\delta$.
\end{proof}

\begin{rem}Answering a question from an earlier version of our article, Pierre Simon had recently demonstrated that all distal structures uniformly cut finite sets.
\end{rem}

\section{Equivalence to distality}\label{sec: equiv to distality}

\subsection{Strong Erd\H{o}s-Hajnal fails in $ACF_p$}\label{sec: Failure in ACFp}

Fields of positive characteristic give a standard example of the failure of the strong
Szemer\'edi-Trotter bound on the number of incidences between points and lines. In a personal
communication Terrence Tao had suggested that it may also be used for the failure of the strong
Erd\H{o}s-Hajnal property in this setting, which turned out to be the case indeed.

Let $\mathbb{F}$ be a field. For a set of points $P \subseteq \mathbb{F}^2$ and a set of lines $L$ in
$\mathbb{F}^2$ we denote by $I(P,L) \subseteq P \times L$ the incidence relation, i.e.
$I(P,L) = \{ (p,l) \in P \times L : p \in l \}$.  As remarked in Example~\ref{ex: non-distal
  structures satisfying *}, every field $\mathbb{F}$ of characteristic $0$ satisfies the strong
Erd\H{o}s-Hajnal property with respect to quantifier-free formulas. In particular we have:

\begin{prop}\label{prop: PointsLines} Let $\mathbb{F}$ be a field of characteristic $0$. Then there
  is a constant $\delta > 0$ such that for any finite (sufficiently large) set of points
  $P \subseteq \mathbb{F}^2$ and any finite (sufficiently large) set of lines $L$ in $\mathbb{F}^2$
  there are some $P_0 \subseteq P$ and $L_0 \subseteq L$ with
  $|P_0| \geq \delta |P|, |L_0| \geq \delta |L|$ and $I(P_0, L_0) = \emptyset$.
\end{prop}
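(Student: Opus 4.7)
The plan is to apply the strong Erdős-Hajnal property for quantifier-free definable relations in arbitrary characteristic $0$ fields, which was recorded in Example~\ref{ex: non-distal structures satisfying *}(3), and then argue that the ``all incidences'' alternative cannot occur when the sets are large, so the ``no incidences'' alternative must hold.

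First I would parametrize lines in $\mathbb{F}^2$ by triples $(a,b,c) \in \mathbb{F}^3$ with $(a,b) \neq (0,0)$, representing the line $\{(x,y) \in \mathbb{F}^2 : ax+by+c=0\}$. The incidence relation between a point $(x,y) \in \mathbb{F}^2$ and a line encoded by $(a,b,c)$ is then given by the quantifier-free formula
\[
\phi(x,y;a,b,c) \;\equiv\; (ax+by+c=0) \wedge \bigl(a\neq 0 \vee b\neq 0\bigr).
\]
Given a finite set $L$ of lines, fix any choice of a triple in $\mathbb{F}^3$ representing each line, producing a finite set $L^\ast \subseteq \mathbb{F}^3$ in bijection with $L$.

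Next, by Example~\ref{ex: non-distal structures satisfying *}(3), the quantifier-free relation $\phi$ has the strong Erdős-Hajnal property in $\mathbb{F}$, so there is a constant $\delta' = \delta'(\phi) > 0$ (independent of $\mathbb{F}$, as it only depends on the formula in the theory of characteristic $0$ fields) such that for all sufficiently large finite $P \subseteq \mathbb{F}^2$ and $L^\ast \subseteq \mathbb{F}^3$ we can find $P_0 \subseteq P$ and $L_0^\ast \subseteq L^\ast$ with $|P_0| \geq \delta'|P|$, $|L_0^\ast| \geq \delta'|L^\ast|$, and $P_0 \times L_0^\ast$ is $\phi$-homogeneous: either contained in $\phi$ or disjoint from it.

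Finally I would rule out the first alternative. If $P_0 \times L_0^\ast \subseteq \phi$ then every point of $P_0$ lies on every line parametrized by $L_0^\ast$; but any two distinct lines in $\mathbb{F}^2$ intersect in at most one point, so if $L_0^\ast$ (equivalently, the corresponding set $L_0 \subseteq L$ of lines) contains at least two lines, then $|P_0| \leq 1$. For $|P|$ and $|L|$ sufficiently large we have both $|P_0| \geq \delta'|P| \geq 2$ and $|L_0| \geq \delta'|L| \geq 2$, contradicting the first alternative. Hence $(P_0 \times L_0) \cap I = \emptyset$, and we may take $\delta = \delta'$. The only minor subtlety is the passage between lines and their parametrizations, which is harmless since we fix one triple per line; no real obstacle is expected.
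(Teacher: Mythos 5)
Your proposal is correct and follows essentially the same route as the paper: the paper derives this proposition directly from Example~\ref{ex: non-distal structures satisfying *}(3) (strong Erd\H{o}s-Hajnal for quantifier-free relations in characteristic $0$ fields, applied to the quantifier-free incidence formula), with the elimination of the ``all incidences'' alternative left implicit since two distinct lines share at most one point. Your write-up just makes explicit the parametrization of lines and this last ruling-out step, which is exactly the intended argument.
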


We show that the assumption of characteristic $0$ cannot be removed.

\begin{prop} We fix a prime $p$, and let $\mathbb{F} = \mathbb{F}_p^{alg}$. Then the conclusion of
  Proposition~\ref{prop: PointsLines} fails in $\mathbb{F}$.
\end{prop}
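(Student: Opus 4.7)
The plan is to exhibit, inside $\mathbb{F} = \mathbb{F}_p^{\mathrm{alg}}$, a sequence of finite configurations witnessing the failure. For each prime power $q = p^n$, the subfield $\mathbb{F}_q$ embeds into $\mathbb{F}$, so I take $P_q := \mathbb{F}_q^2 \subseteq \mathbb{F}^2$ and let $L_q$ consist of all $q^2+q$ affine lines defined over $\mathbb{F}_q$. I will show that for any fixed $\delta > 0$, once $q$ is large enough, no pair $P_0 \subseteq P_q$, $L_0 \subseteq L_q$ with $|P_0| \geq \delta|P_q|$ and $|L_0| \geq \delta|L_q|$ can be $I$-homogeneous; letting $q \to \infty$ then refutes the proposition.

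The ``full incidence'' direction is immediate: if $P_0 \times L_0 \subseteq I$, then every point of $P_0$ lies on every line of $L_0$, so $P_0 \subseteq \bigcap_{l \in L_0} l$, which is at most a single point whenever $L_0$ contains two distinct lines --- impossible once $|P_0| \geq \delta q^2 \geq 2$.

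The core step is the ``no incidence'' direction: I would show $(P_0 \times L_0) \cap I \neq \emptyset$ by a Cauchy--Schwarz double count. Writing $d(l) = |l \cap P_0|$, the identities $\sum_{l \in L_q} d(l) = (q+1)|P_0|$ (each point lies on $q+1$ lines) and $\sum_{l \in L_q} d(l)^2 = |P_0|^2 + q|P_0|$ (any two distinct points lie on a unique common line) combine via Cauchy--Schwarz, applied to the set $L^{\mathrm{hit}}$ of lines meeting $P_0$, to yield
\[
|L^{\mathrm{hit}}| \;\geq\; \frac{(q+1)^2 |P_0|}{|P_0|+q}.
\]
For $|P_0| \geq \delta q^2$ this forces $|L^{\mathrm{hit}}| \geq (q^2+q) - O_\delta(q)$, so the set of lines entirely avoiding $P_0$ has size only $O_\delta(q)$. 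Since $|L_0| \geq \delta(q^2+q)$ grows quadratically in $q$, for $q$ large enough $L_0$ cannot be contained in this small set of missing lines, producing the desired incidence.

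The main obstacle is purely bookkeeping uniformity: one must check that the implicit constant in $O_\delta(q)$ depends only on $\delta$ (and not on the particular choice of $P_0$ or $L_0$), so that a threshold $q_0 = q_0(\delta)$ above which $|L_0|$ exceeds the count of missing lines is well defined. Morally, the argument is the expander mixing lemma applied to the bipartite incidence graph of the affine plane $AG(2,q)$, whose non-trivial eigenvalues have absolute value $O(\sqrt q)$; this same feature is precisely what causes the strong Szemer\'edi--Trotter bound to fail in positive characteristic, and correspondingly forces the strong Erd\H{o}s--Hajnal property to fail for incidences over $\mathbb{F}_p^{\mathrm{alg}}$.
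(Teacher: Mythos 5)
Your proposal is correct and is essentially the paper's argument: both embed the finite affine planes $\mathbb{F}_q^2$ into $\mathbb{F}_p^{\mathrm{alg}}$ and rule out a large incidence-free pair by a Cauchy--Schwarz double count resting on the fact that two distinct points lie on a unique line (the paper works with the $q^2$ lines $y=ax+b$ and derives a contradiction from $|I(P_0,L\setminus L_0)|=q|P_0|$ against the bound $|I(P_0,L_1)|\leq \sqrt{|L_1|}\sqrt{|I(P_0,L_1)|+|P_0|^2}$, while you bound the number of lines disjoint from $P_0$ by $\frac{1-\delta}{\delta}(q+1)$, a purely cosmetic repackaging). The uniformity issue you flag is harmless, since your estimate is explicit in $\delta$ and gives a threshold such as $q_0(\delta)=\lceil (1-\delta)/\delta^2\rceil$.
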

\begin{proof}
  Assume towards a contradiction that $\mathbb{F}$ satisfies Proposition~\ref{prop: PointsLines}.

  Since every finite field of characteristic $p$ can be embedded into $\mathbb{F}$, we obtain that
  the following would be true:

  Let $\mathbb{F}_q$ be a finite field of characteristic $p$, of size $q$. Let $P$ be the set of
  \emph{all} points in $\mathbb{F}_q$, and let $L$ be the set of \emph{all} lines in
  $\mathbb{F}_q^2$ of the form $y = ax + b$. Then there are $P_0 \subseteq P, L_0 \subseteq L$ with
  $|P_0| \geq \delta |P|, |L_0| \geq \delta |L|$ and such that $I(P_0,L_0) = \emptyset$.

  We show that this is impossible. We have $|P| = q^2, |L| = q^2$. Since $\mathbb{F}_q$ has size
  $q$, every line contains exactly $q$ points, therefore $|I(P,L)| = |L|q = q^3$. Notice also that
  every point belongs to exactly $q$ lines in $L$.

  We fix $k$ large enough so that $\frac{1}{p^k} < \delta$, and let $\delta_0 =
  \frac{1}{p^k}$. Since $q = p^n$ for some $n$, $\delta_0 q$ is an integer for every $q \geq p^k$.

  Hence we can choose $P_0 \subseteq P$ with $|P_0| = \delta_0 |P|$ and $L_0 \subseteq L$ with
  $|L_0| = \delta_0 |L|$ such that $I(P_0, L_0) = \emptyset$. Let
  $P_1 = P \setminus P_0, L_1 = L \setminus L_0$. We have $|P_1| = (1 - \delta_0) q^2$ and
  $|L_1| = (1 - \delta_0) q^2$.

  Consider $I(P_0, L)$. Since every point belongs to exactly $q$ lines in $L$ we have
  $|I(P_0, L)| = |P_0| q = \delta_0 q^3$. Since $I(P_0, L_0) = \emptyset$, we have
  $I(P_0,L) = I(P_0, L_1)$, so $|I(P_0,L_1)| = \delta_0 q^3$.

  On the other hand,  from the Cauchy-Schwartz inequality (see e.g. \cite[Page 1]{Rudnev}) it follows
  that $|I(P_0, L_1)| \leq \sqrt{|L_1|} \sqrt{|I(P_0, L_1)| + |P_0|^2}$.

  Thus we have
$$\delta_0 q^3 \leq \sqrt{(1 - \delta_0)q^2} \sqrt{\delta_0 q^3 + \delta_0^2 q^4} = \sqrt{(1-\delta_0)\delta_0 q^5 + (1-\delta_0) \delta_0^2 q^6} \textrm{.}$$
 
Since $(1 - \delta_0) < 1$, the above inequality fails for large enough $q$ --- a contradiction.

\end{proof}

\begin{cor}
  Let $K$ be an infinite field definable in a distal structure $\CM$. Then $char(K) = 0$.
\end{cor}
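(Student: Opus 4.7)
The plan is to argue by contradiction: suppose $K$ is an infinite field of characteristic $p > 0$ definable in a distal structure $\mathcal{M}$. Being definable in $\mathcal{M}$, $K$ is in particular interpretable in $\mathcal{M}$, so by the Corollary preceding Example~\ref{ex: non-distal structures satisfying *} every definable relation in $K$ satisfies the strong Erd\H{o}s--Hajnal property. Applied to the quantifier-free definable incidence relation $I \subseteq K^{2} \times K^{2}$ given by $I((x,y),(a,b)) \iff y = ax + b$, this produces a uniform constant $\delta > 0$ witnessing strong EH for point-line incidences in $K$.

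The next step is to contradict this constant $\delta$ using Proposition~\ref{prop: PointsLines}. Its proof exhibits, for each prime power $q = p^{n}$, the witness configuration $P_{q} = \mathbb{F}_{q}^{2}$, $L_{q} = \{y = ax + b : a,b \in \mathbb{F}_{q}\}$ with $|P_{q}| = |L_{q}| = q^{2}$ and $|I(P_{q},L_{q})| = q^{3}$, and shows via the Cauchy--Schwarz inequality that no $\delta$-homogeneous pair can avoid all incidences once $q$ is sufficiently large relative to $1/\delta$. By the Remark on embeddings that precedes the Corollary on interpretations, strong EH for quantifier-free definable relations is inherited by substructures in the same language. Consequently, if one can exhibit an embedding $\mathbb{F}_{p^{n}} \hookrightarrow K$ for arbitrarily large $n$, then the configurations $P_{q}, L_{q}$ live inside $K^{2}$ and directly contradict the strong EH of $I$ in $K$ for $q$ large.

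The main obstacle is therefore to verify that sufficiently many of the finite fields $\mathbb{F}_{p^{n}}$ embed into $K$. This is automatic when $K$ is algebraic over its prime field $\mathbb{F}_{p}$, since any infinite algebraic extension of $\mathbb{F}_{p}$ must contain $\mathbb{F}_{p^{n}}$ for unboundedly many $n$. In the transcendental case one must combine the strong EH hypothesis — which by the Remark at the end of Section~\ref{sec: density distal Ramsey} forces $K$ to be an NIP field of positive characteristic — with structural restrictions on such fields (for instance Artin--Schreier closure in the sense of Kaplan--Scanlon--Wagner) to rule out the possibility that $K$ contains only bounded finite subfields. Once this is achieved, the Proposition's Cauchy--Schwarz bound inside $K$ contradicts the uniform $\delta$ and forces $\mathrm{char}(K) = 0$.
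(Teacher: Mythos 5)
Your overall route is the same as the paper's: deduce from distality (via interpretability) that the quantifier-free incidence relation $y=ax+b$ over $K$ satisfies the strong Erd\H{o}s--Hajnal property with some uniform $\delta>0$, and then contradict this using the Cauchy--Schwarz incidence count over $\mathbb{F}_q^2$ for $q=p^n$ large, exactly as in (the proof of the proposition following) Proposition~\ref{prop: PointsLines}. The issue is that the step you yourself flag as the main obstacle is left open: in the transcendental case you only say that ``one must combine'' strong EH with ``structural restrictions'' such as Artin--Schreier closure ``to rule out the possibility that $K$ contains only bounded finite subfields,'' and then proceed ``once this is achieved.'' That is precisely the point where the paper's proof does its only real work, by quoting \cite[Corollary 4.5]{kaplan2011artin}: every infinite NIP field of characteristic $p>0$ contains $\mathbb{F}_p^{alg}$. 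Since $K$ is definable in the distal (hence NIP) structure $\CM$, it is an infinite NIP field, so this corollary applies directly; it gives $\mathbb{F}_{p^n}\subseteq K$ for all $n$ at once and makes your case split (algebraic over $\mathbb{F}_p$ versus transcendental) unnecessary.

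Two further cautions on the way you gesture at closing the gap. First, Artin--Schreier closedness of $K$ alone does not obviously yield unboundedly large finite subfields; the statement you actually need is the containment of $\mathbb{F}_p^{alg}$, and its derivation in Kaplan--Scanlon--Wagner uses that finite extensions of $K$ are again NIP (being interpretable in $K$) and hence Artin--Schreier closed, not just closedness of $K$ itself. So ``for instance Artin--Schreier closure'' should be replaced by the precise Corollary 4.5. Second, once $\mathbb{F}_q\subseteq K$ for arbitrarily large $q$, you do not need the substructure-inheritance remark at all: strong EH is a statement about all finite subsets, so the configurations $P_q,L_q$ sitting inside $K^2$ contradict the uniform $\delta$ directly. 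With the KSW citation inserted, your argument becomes essentially identical to the paper's proof.
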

\begin{proof}
  By \cite[Corollary 4.5]{kaplan2011artin} every infinite NIP field of characteristic $p>0$ contains
  $\mathbb{F}_p^{alg}$. But then $\CM$ cannot satisfy the strong Erd\H{o}s-Hajnal property by the
  proposition above, contradicting distality.
\end{proof}

In particular the theory $ACF_p$ admits no distal expansion. No examples of NIP theories with this
property were known until now.

\subsection{Equivalence to distality}\label{sec: equivalence to distality} In this section we
assume some familiarity with NIP theories (see e.g. \cite{SimBook}) and recall some facts about
distal theories. We fix a theory $T$ and a big sufficiently saturated model $\UU$ of $T$. Recall
that a sequence $(a_i : i \in I)$ of elements of $M^n$ indexed by a linear order $I$ is
\emph{indiscernible} over a set of parameters $A \subseteq M$ if for any $i_1 < \ldots < i_k$ and
$j_1 < \ldots < j_k$ from $I$ we have
$\tp(a_{i_1}, \ldots, a_{i_k}/A) = \tp(a_{j_1}, \ldots, a_{j_k}/A)$. Given a linear order $I$, by a
\emph{Dedekind cut} in $I$ we mean a cut $I = I_1 + I_2$ (i.e.,
$I = I_1 \cup I_2, I_1 \cap I_2 = \emptyset$ and $a < b$ for all $a \in I_1, b \in I_2$) such that
$I_1$ has no maximal element and $I_2$ has no minimal element. We denote by $I^*$ the reverse of the
order on $I$.

\begin{fact}\label{fac: distality indiscernibles}\cite{ExtDefII} Let $T$ be a complete NIP
  theory. Then the following are equivalent:
  \begin{enumerate}
  \item $T$ is distal (in the sense of Fact~\ref{StrongHonestDef}).
  \item Every indiscernible sequence $I \subseteq M^d$ in any model of $M$ of $T$ is distal. That
    is, for any two distinct Dedekind cuts of $I$, if some two elements fill them separately, then
    they also fill them simultaneously: if $I = I_1 + I_2 + I_3$ and we have some $a$ and $b$ from
    $M^d$ such that both $I_1 + a + I_2 + I_3$ and $I_1 + I_2 + b + I_3$ are indiscernible, then
    $I_1 + a + I_2 + b + I_3$ is indiscernible.
  \end{enumerate}
\end{fact}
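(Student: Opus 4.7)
The plan is to prove the equivalence in two directions. For brevity, let (HD) denote the strong honest definability statement of Fact~\ref{StrongHonestDef}(2), and (DI) the distal indiscernibles property asserted in item (2) above.

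For the direction (HD) $\Rightarrow$ (DI), I would fix an indiscernible sequence $I = I_1 + I_2 + I_3$ together with $a, b$ filling the two cuts separately, and verify that every finite pattern in $I_1 + a + I_2 + b + I_3$ realizes the order-type-determined type. The argument reduces to $|x|=1$ via Remark~\ref{srem:distal-one} and proceeds formula by formula. Given $\phi(x, y)$ and a finite $B \subseteq I \cup \{b\}$, apply (HD) to obtain $\psi(x, y_1, \ldots, y_n)$ and a tuple $\bar d \in B$ with $\psi(x, \bar d) \vdash \tp_\phi(a/B)$. The indiscernibility of $I_1 + a + I_2 + I_3$ implies that the $\phi$-type of $a$ over any finite subset of $I$ depends only on the order type; the indiscernibility of $I_1 + I_2 + b + I_3$ implies that $b$ is interchangeable, inside $\psi$, with elements of $I$ adjacent to its cut. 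Combining the two symmetries shows that $\tp_\phi(a/B)$ is determined by the order type of $B$ in $I_1 + a + I_2 + b + I_3$, yielding indiscernibility.

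For the direction (DI) $\Rightarrow$ (HD), I would argue by contradiction. Suppose (HD) fails for some formula $\phi(x,y)$: for every candidate $\psi(x, y_1, \ldots, y_n)$ there is a counterexample consisting of a finite $B$ and an $a$ with no strong honest definition over $B$. The standard NIP honest definition theorem nevertheless produces a formula $\psi^*(x, \bar z)$ capturing $\tp_\phi(a/B)$ using parameters $\bar d$ from an elementary extension, not from $B$. A Ramsey-style extraction of indiscernibles applied to infinitely many such counterexamples would yield an indiscernible sequence $I = (c_i)_{i \in I}$, an element $a$ filling one cut of $I$ generically, and an external parameter $b$ which by saturation can be arranged to fill a distinct cut of $I$ indiscernibly. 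Now (DI) forces $I_1 + a + I_2 + b + I_3$ to be indiscernible, which, combined with the original indiscernibility data, translates into a strong honest definition of $\tp_\phi(a/I)$ with parameters from $I \cup \{b\}$, and then, by absorbing $b$ into the sequence, from $I$ alone; contradicting the choice of $I$ and $a$.

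The main obstacle will be the direction (DI) $\Rightarrow$ (HD). Two ingredients lie behind it: the general existence of honest definitions for NIP formulas (with possibly external parameters), and a careful Ramsey/compactness extraction that materializes a uniform failure of (HD) as an explicit non-distal configuration of indiscernibles. The conceptual crux is identifying the ``external parameter'' in a failed honest definition as an element that, after suitable compactness and saturation, fills a second cut of the very indiscernible sequence witnessing the failure; making this identification precise and controlling its interaction with the $\phi$-type of $a$ is the technical heart of the argument.
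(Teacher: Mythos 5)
The paper does not prove this statement: it is quoted as a known fact from \cite{ExtDefII} (see also \cite{Distal}), so your proposal has to be judged on its own merits rather than against anything in this paper. As it stands it has genuine gaps in both directions. In the direction (HD) $\Rightarrow$ (DI), the decisive step --- ``the indiscernibility of $I_1+I_2+b+I_3$ implies that $b$ is interchangeable, inside $\psi$, with elements of $I$ adjacent to its cut'' --- is unjustified, and it is exactly where the content of the theorem lives. That indiscernibility only controls $\tp(b/I)$; the interchangeability you need is over $I\cup\{a\}$, since you must preserve $\psi(a,\bar d)$ and the implication $\psi(x,\bar d)\vdash \tp_\phi(a/B)$ when the tuple $\bar d$ produced by Fact~\ref{StrongHonestDef}(2) contains $b$. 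The joint type $\tp(a,b/I)$ is precisely the unknown, and no purely formal combination of the two separate indiscernibility hypotheses can yield the joint one: otherwise every NIP theory would be distal, whereas, e.g., in a stable theory one can take $b=a$ generic over $I$, which fills both cuts separately but not simultaneously. A correct argument must force the deciding formula to have parameters inside $I$: if $\bar d$ happens to avoid $b$, your sketch can be completed (the condition $\forall x\,(\psi(x,\bar d)\to\phi(x,y))$ on $y$ is definable over $I$, and since $\tp(b/I)$ is the limit type of its cut the decision transfers to elements of $I_2$ beyond the parameters, giving the generic truth value); but nothing prevents the strong honest definition from using $b$ itself as one of its parameters, and your proposal has no mechanism for that case --- handling it is the actual work. (The appeal to Remark~\ref{srem:distal-one} is also misplaced here: that reduction is for establishing (HD) from its one-variable case, not for deducing distality of sequences of $d$-tuples.)

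For (DI) $\Rightarrow$ (HD) you only gesture at the structure of the argument, and the two steps you leave open are the ones that carry the proof. First, ``(HD) fails for $\phi$'' must be converted by compactness into a single consistent configuration witnessing failure against all candidate formulas $\psi$ simultaneously; a separate counterexample $(a,B)$ for each $\psi$ is not yet such a configuration. Second, and more seriously, you assert that the external parameter of an ordinary (NIP) honest definition ``by saturation can be arranged to fill a distinct cut of $I$ indiscernibly''; this is the technical heart you yourself identify, and no argument is offered. In the published proof this is where limit types of cuts and base change enter (compare Fact~\ref{StrongBaseChange} and the way Facts~\ref{AverageMeasures}--\ref{StrongBaseChange} are used in the proof of Theorem~\ref{thm: characterizing distality} in Section~\ref{sec: equivalence to distality}), together with a careful extraction of the indiscernible sequence from $B$ itself so that the offending parameter sits at a Dedekind cut of it. So the proposal is a reasonable high-level plan, but both of its crucial steps are currently missing rather than merely sketched.
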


Given an indiscernible sequence $I = (a_i : i \in [0,1])$, one defines the \emph{average measure}
$\mu$ of $I$ as the global Keisler measure given by
$\mu(\phi(x)) = \lambda_1(\{ i \in [0,1] : a_i \models \phi(x) \})$ for all definable sets
$\phi(x)$, where $\lambda_1$ is the Lebesgue measure on $[0,1]$. It follows from NIP that this
Keisler measure is well-defined, i.e.\  that the corresponding set of indices is measurable for every
$\phi(x)$ with parameters from $\UU$. We say that $\mathfrak{c} = (I_1, I_2, t)$ is a
\emph{polarized} cut of $I$ if $I = I_1 + I_2$ is a cut of $I$ and $t \in \{ 1, 2 \}$ specifies
whether it is approached from the left or from the right. It follows from NIP that for a polarized
Dedekind cut $\mathfrak{c} = (I_1, I_2, t)$ and a set of parameters $A \subseteq \UU$ we have a
complete \emph{limit type} of $\mathfrak{c}$ over $A$ denoted by $\lim(\mathfrak{c}/A)$ and defined
by $\phi(x) \in \lim(\mathfrak{c}/A) \iff$ the set $\{ i \in I_t : \UU \models \phi(a_i) \}$ is
unbounded from above in case $t = 1$, or from below in case $t=2$.

\begin{fact}\label{AverageMeasures}
  Let $T$ be NIP, let $I$ be an indiscernible sequence and let $\mu$ be the average measure of $I$.
  \begin{enumerate}
  \item The measure $\mu$ is generically stable (\cite[Proposition 3.7]{NIP3}).
  \item The support of $\mu$ is exactly the set of limit types of cuts of $I$. That is, if for some
    formula $\phi(x)$ we have $\mu(\phi(x)) > 0$ then $\phi(x) \in \lim(\mathfrak{c}/\UU)$ for some
    polarized Dedekind cut $\mathfrak{c}$ of $I$ \cite[Lemma 2.20]{Distal}.
  \item $I$ is distal if and only if $\mu$ is smooth \cite[Proposition 2.21]{Distal}.
  \end{enumerate}
\end{fact}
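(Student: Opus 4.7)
The plan is to establish the three items in order, using NIP combinatorics and the structure of indiscernible sequences. Write $I = (a_i : i \in [0,1])$ and let $\mu$ be its average measure, so $\mu(\phi(x)) = \lambda_1(\{ i : a_i \models \phi(x) \})$ for every $\phi(x)$ with parameters from $\UU$. Throughout, the key combinatorial input is that for any formula $\phi(x,y)$ and any $b \in \UU^{|y|}$, the set $S_b = \{ i \in [0,1] : \UU \models \phi(a_i, b) \}$ is a finite union of subintervals of $[0,1]$, with the number of intervals bounded solely by the alternation number of $\phi(x,y)$; this follows from NIP together with indiscernibility of $I$, since a longer alternation pattern in $S_b$ would let us extract, by indiscernibility, a shattered subset of $I$.

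For (1), I would verify the characterization of generic stability given by Fact~\ref{GenStable}(2). Fix $\phi(x,y)$ and $\varepsilon > 0$, pick $n = n(\varepsilon, \phi)$ large compared to the uniform bound above on the number of intervals, and let $i_1 < \cdots < i_n$ be an equispaced grid in $[0,1]$. Then, uniformly in $b$, the empirical frequency $\Av(a_{i_1}, \ldots, a_{i_n}; \phi(x,b))$ differs from $\lambda_1(S_b)$ by at most the grid spacing times the number of intervals of $S_b$, which is $O(1/n) < \varepsilon$ by the choice of $n$. For (2), if $\mu(\phi(x)) > 0$ then, by the same structural observation, at least one component of $S$ is a true interval $(\alpha, \beta)$ of positive length. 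Taking $\mathfrak{c}$ to be the polarized cut at $\alpha$ approached from the right, the set $\{ i : a_i \models \phi(x) \}$ is cofinal in $\mathfrak{c}$ from above, so $\phi(x) \in \lim(\mathfrak{c}/\UU)$ by the definition of the limit type.

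For (3), the forward direction (smoothness implies distality of $I$) proceeds by contraposition: from a configuration $I = I_1 + I_2 + I_3$ and elements $a, b \in \UU$ that fill the two cuts separately but not simultaneously, one builds two distinct global Keisler measures agreeing with $\mu$ on $M$ by ``redistributing'' mass among the two possible ways of extending the limit type at the inner cut, violating the uniqueness characterizing smoothness. For the converse, assume $I$ is distal; by (2), $\mu$ is supported on the limit types of cuts of $I$, and distality of $I$ (Fact~\ref{fac: distality indiscernibles}) is exactly the statement that each such limit type extends uniquely once the corresponding cut is filled, which upgrades pointwise uniqueness of extensions into uniqueness at the measure-theoretic level. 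I expect step (3), in particular the converse direction, to be the main obstacle, since one must translate a pointwise statement about extensions of limit types into a uniqueness statement for the whole Keisler measure; this is essentially \cite[Proposition 2.21]{Distal} and requires careful bookkeeping of the interaction between smoothness, support, and the approximations coming from (1).
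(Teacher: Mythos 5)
First, note that the paper does not prove this statement at all: it is quoted as a Fact, with (1) cited to \cite[Proposition 3.7]{NIP3} and (2),(3) to \cite[Lemma 2.20, Proposition 2.21]{Distal}, so there is no in-paper argument to compare yours against. On their own merits, your sketches of (1) and (2) are essentially the standard proofs and are fine: bounded alternation of $\phi(x,b)$ on the indiscernible sequence (uniformly in $b\in\UU$) makes each trace a union of boundedly many convex sets, an equispaced grid from $I$ then gives the uniform approximation required by Fact~\ref{GenStable}(2), and a positive-measure trace must contain a nondegenerate interval whose left endpoint yields the polarized cut for (2).

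Item (3), however, has a genuine gap in both directions. For ``not distal $\Rightarrow$ not smooth'', your plan to ``redistribute mass among the two possible ways of extending the limit type at the inner cut'' cannot work as stated: a single cut (indeed any null family of cuts) is invisible to the average measure, since $\mu(\phi)$ only depends on $\lambda_1$ of the trace of $\phi$ on the index set, so changing the chosen extension at one cut produces no second global measure extending $\mu\restriction M$, and you give no mechanism for making a modified assignment finitely additive, agreeing with $\mu$ on $M$, and actually different on some definable set. The real argument must either exhibit a formula $\phi(x,d)$, $d\in\UU$, that cannot be sandwiched by $M$-definable sets up to $\varepsilon$ in the sense of Fact~\ref{non-uniform epsilon-net for smooth}, or build a second extension coherently across a positive-measure family of cuts; either way one needs the non-distality configuration spread over many cuts together with strong base change (Fact~\ref{StrongBaseChange}) to replace arbitrary fillers by fillers realizing limit types over $M$ while preserving $\neg\phi(\dots,c,\dots,d,\dots)$ --- precisely the mechanism the paper deploys in the (2)$\Rightarrow$(1) direction of Theorem~\ref{thm: characterizing distality}, and it is the actual content of \cite[Proposition 2.21]{Distal}.

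For the converse, your rephrasing of distality of $I$ as ``each limit type extends uniquely once the corresponding cut is filled'' is not correct: distality of a sequence (Fact~\ref{fac: distality indiscernibles}) is an amalgamation statement about filling two \emph{distinct} cuts simultaneously, and even in a distal theory a cut has many fillers whose types over $\UU$ differ from the limit type (already for an increasing sequence in a real closed field). Consequently the proposed ``upgrade of pointwise uniqueness to uniqueness at the measure-theoretic level'' has no starting point, and no argument is given for how an arbitrary global extension of $\mu\restriction M$ is forced to agree with $\mu$ on formulas with parameters outside $M$. So item (3) remains unproved in your write-up; since the paper itself only cites it, you should either quote \cite[Proposition 2.21]{Distal} as the paper does or supply a genuine proof along the lines above.
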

Recall that a sequence $(a_i : i \in I)$ is \emph{totally indiscernible} if for any
$i_1 \neq \ldots \neq i_k$ and $j_1 \neq \ldots \neq j_k$ from $I$ we have
$\tp(a_{i_1}, \ldots, a_{i_k}/A) = \tp(a_{j_1}, \ldots, a_{j_k}/A)$
\begin{fact}\label{fac: dp-min distal totally indisc} \cite{Distal} Let $T$ be dp-minimal. Then it
  is distal if and only if no infinite indiscernible sequence is totally indiscernible.
\end{fact}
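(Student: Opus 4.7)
The plan is to invoke Fact~\ref{fac: distality indiscernibles}: it suffices, once dp-minimality is assumed, to show that every infinite non-constant indiscernible sequence $I$ is distal if and only if no such $I$ is totally indiscernible.

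For the forward direction, dp-minimality is not needed. Assume $T$ is distal and let $I$ be an infinite totally indiscernible sequence with pairwise distinct elements. Split $I = I_1 + I_2 + I_3$ with $I_2$ infinite, and pick an element $b$ realizing the limit type of $I$ at the cut between $I_1$ and $I_2$. By total indiscernibility the limit types at every cut of $I$ coincide (reversing the order on a totally indiscernible sequence preserves its type), so $b$ also realizes the limit type at the cut between $I_2$ and $I_3$; hence both $I_1 + b + I_2 + I_3$ and $I_1 + I_2 + b + I_3$ are indiscernible. Distality would then force $I_1 + b + I_2 + b + I_3$ to be indiscernible too, which is absurd: its induced $2$-type would have to satisfy both ``$x = y$'' (from the two copies of $b$) and ``$x \neq y$'' (from any two distinct elements of $I$).

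For the converse, assume $T$ is dp-minimal and that no infinite indiscernible sequence is totally indiscernible. Suppose towards a contradiction that some indiscernible sequence $I$ is not distal; the aim is to deduce that $I$ must in fact be totally indiscernible, contradicting the hypothesis. Non-distality provides a decomposition $I = I_1 + I_2 + I_3$ with $I_2$ infinite together with elements $b$, $c$ filling the two successive cuts separately but not jointly, and a formula $\phi(x, y, \bar z)$, with $\bar z$ a tuple from $I$, witnessing the failure.

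At this point dp-minimality is brought to bear. From $I_2$ one extracts two mutually indiscernible sub-sequences that, together with $b$ and $c$ playing the role of two independent ``column'' and ``row'' witnesses, assemble into an inp-pattern of depth $2$ in the sort of $I$, contradicting the bound $\mathrm{dp}\text{-}\mathrm{rk}(T) \leq 1$. The only way to avoid materialising such a pattern is for the limit types at the two cuts not to be genuinely distinct as ordered limits, that is, for the ordering of $I$ to be invisible to the language, so that $I$ can be freely rearranged; but this is exactly total indiscernibility. The principal obstacle is precisely this last step: translating the failure of distality into a genuine inp-pattern requires careful bookkeeping of which sub-sequences of $I$ remain mutually indiscernible after introducing $b$ and $c$, and then arguing that the only way to prevent the pattern from appearing is to forget the ordering. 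Once this dichotomy between distal and totally indiscernible behaviour is established in dp-minimal $T$, the converse is immediate from the hypothesis.
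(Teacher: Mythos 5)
The paper does not actually prove this statement: it is quoted verbatim from Simon's paper \cite{Distal}, so there is no internal argument to compare yours against, and your proposal has to stand on its own. Your left-to-right direction does essentially work and, as you say, needs no dp-minimality: after stretching the totally indiscernible sequence by compactness (total indiscernibility is part of the EM-type, so it is preserved) so that the two cuts of $I=I_1+I_2+I_3$ are genuine Dedekind cuts as required in Fact~\ref{fac: distality indiscernibles}, a realization $b$ of the common limit type over $I$ fills each cut separately, and filling them simultaneously would force $b\neq b$; one must also read the statement as tacitly excluding constant sequences, which your ``pairwise distinct'' hypothesis handles.

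The genuine gap is in the converse, and it is twofold. First, your stated target --- that a non-distal sequence $I$ must itself be totally indiscernible --- is not attainable. Consider a one-sorted dp-minimal structure consisting of a dense linear order together with a disjoint infinite pure set (a unary predicate separating the two parts, the order living only on the first): it is dp-minimal and non-distal. The sequence of pairs $(a_i,b_i)$ with $a_i$ strictly increasing in the ordered part and $b_i$ pairwise distinct in the pure-set part is indiscernible and not totally indiscernible, yet it is not distal either: fill the two cuts by $(a,c)$ and $(a',c)$ with the same new point $c$ of the pure-set part; each insertion separately preserves indiscernibility, but the joint insertion repeats a second coordinate. So the correct goal is only that \emph{some} infinite non-constant totally indiscernible sequence exists, and reaching it in practice requires first reducing to sequences of singletons (a reduction that is itself non-trivial for the sequence-based definition of distality; compare Remark~\ref{srem:distal-one} for the honest-definitions version) before dp-minimality, a condition on types in a single variable, can be brought to bear. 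Second, the step where dp-minimality is supposed to act --- extracting an inp-pattern of depth $2$ from the failure of distality and arguing that the only way to avoid it is total indiscernibility --- is only described, not carried out; you flag it yourself as the ``principal obstacle.'' But that dichotomy is precisely the content of the theorem in the dp-minimal case, so as written the right-to-left direction remains unproved.
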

\begin{rem}\label{rem: dp-min dist exp} It follows that if $T'$ is a dp-minimal expansion of a
  distal dp-minimal theory $T$, then $T'$ is distal as well.

  Indeed, If $T'$ expands $T$ and $(a_i : i \in I)$ is an infinite $L'$-indiscernible sequence, then
  it is in particular $L$-indiscernible, so not totally-$L$-indiscernible by distality of $T$, so of
  course not totally-$L'$-indiscernible.
\end{rem}

\begin{fact}[Strong base change, Lemma 2.8 in \cite{Distal}]\label{StrongBaseChange} Let $T$ be
  NIP. Let $I$ be an indiscernible sequence and $A \supseteq I$ a set of parameters. Let
  $(\mathfrak{c}_i : i < \alpha)$ be a sequence of pairwise-distinct polarized Dedekind cuts in
  $I$. For each $i$, let $d_i$ fill the cut $\mathfrak{c}_i$ (i.e., if
  $\mathfrak{c}_i = (I_1,I_2,t) $ then $I_1 + d_i + I_2$ is indiscernible). Then there exist
  $(d'_i : i < \alpha)$ in $\mathbb{U}$ such that:
  \begin{enumerate}
  \item $\tp((d'_i)_{i<\alpha} / I) = \tp((d_i)_{i<\alpha} / I)$,
  \item for each $i<\alpha$, $\tp(d'_i / A) = \lim (\mathfrak{c}_i / A)$.
  \end{enumerate}
\end{fact}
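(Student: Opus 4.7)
The plan is to reduce to the finite case by compactness and then construct $(d'_i)$ inductively, using the global $I$-invariant extensions of the limit types of the cuts.

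By compactness, the conclusion is equivalent to the consistency of the partial type over $A$
\[
\Sigma((x_i)_i) := \tp((d_i)_{i<\alpha}/I) \cup \bigcup_{i<\alpha} \lim(\mathfrak{c}_i/A)(x_i),
\]
so it suffices to check finite subtypes, and thus to treat the case $\alpha = n < \omega$. For each $i$ I would take $p_i$ to be the canonical global $I$-invariant type, finitely satisfiable in $I$, given by the polarized limit of $\mathfrak{c}_i$: for a formula $\phi(x,\bar{b})$ with $\bar{b} \in \UU$, declare $\phi(x,\bar{b}) \in p_i$ iff $\models \phi(a,\bar{b})$ for all $a \in I$ sufficiently close to $\mathfrak{c}_i$ on the polarized side. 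By NIP applied to $\phi$ this yields a well-defined complete type, with $p_i | A = \lim(\mathfrak{c}_i/A)$ and $p_i | I = \tp(d_i/I)$.

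I would then construct $d'_1, \dotsc, d'_n$ by induction on $i$, maintaining the invariants $\tp((d'_j)_{j \le i}/I) = \tp((d_j)_{j \le i}/I)$ and $\tp(d'_j/A) = \lim(\mathfrak{c}_j/A)$ for all $j \le i$. At stage $i$, I fix $\sigma \in \Aut(\UU/I)$ sending $d_j \mapsto d'_j$ for $j < i$ and set $e_i := \sigma(d_i)$, so that $\tp(e_i/I d'_1 \dotsc d'_{i-1}) = \tp(d_i/I d_1 \dotsc d_{i-1})$. It then suffices to produce $d'_i \in \UU$ realizing the partial type
\[
\tp(e_i/I d'_1 \dotsc d'_{i-1})(x) \; \cup \; \lim(\mathfrak{c}_i/A)(x),
\]
which by saturation reduces to verifying its consistency over $A \cup \{d'_1, \dotsc, d'_{i-1}\}$.

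The main obstacle is precisely this consistency verification: both parts extend the common restriction $\tp(d_i/I) = p_i | I$, but beyond $I$ they live over disjoint parameter sets, so a priori incompatible constraints could appear; in the distal case they would automatically agree by Fact~\ref{fac: distality indiscernibles}. Without distality, I expect to derive consistency from finite satisfiability of $p_i$ in $I$ combined with a standard NIP alternation argument: a finite inconsistency, of the form $\phi(x,\bar{a}) \wedge \psi(x,\bar{c},\bar{f})$ with $\phi(x,\bar{a}) \in \lim(\mathfrak{c}_i/A)$ and $\psi(x,\bar{c},\bar{f}) \in \tp(e_i/I d'_1 \dotsc d'_{i-1})$, would force --- after using that $\phi(x,\bar{a})$ holds on a tail of $I$ approaching $\mathfrak{c}_i$ --- the extraction of an $I$-indiscernible pattern exhibiting unboundedly many alternations of some formula, contradicting NIP. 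The hypothesis that the cuts $\mathfrak{c}_j$ are pairwise distinct is essential, as between distinct cuts there are elements of $I$ that decouple the constraints from different coordinates.
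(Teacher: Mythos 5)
The paper itself does not prove Fact~\ref{StrongBaseChange}: it is quoted as a black box from \cite[Lemma 2.8]{Distal}, so your argument has to stand on its own. The easy parts of your proposal are fine: compactness does reduce to finitely many cuts; the polarized limit type $p_i$ is a well-defined global type finitely satisfiable in $I$ (by finite alternation for NIP formulas), with $p_i|_A=\lim(\mathfrak{c}_i/A)$ and $p_i|_I=\tp(d_i/I)$; and the automorphism-over-$I$ bookkeeping in the induction is standard. But these are exactly the trivial parts: for a single cut the statement is immediate (any realization of $\lim(\mathfrak{c}_0/A)$ works), and the whole content of the lemma is the consistency of $\tp(e_i/Id'_{<i})\cup\lim(\mathfrak{c}_i/A)$, which you yourself flag as ``the main obstacle'' and do not prove, so the proposal essentially restates the problem.

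Moreover, the mechanism you sketch for that step --- finite satisfiability of $p_i$ in $I$ plus an NIP alternation argument --- cannot work as stated, because $\tp(e_i/Id'_{<i})$ is in general \emph{not} finitely satisfiable in $I$: the problematic formulas are those with parameters among the previously constructed $d'_j$. For instance, in the (stable, hence NIP) theory of an equivalence relation $E$ with infinitely many infinite classes, let $I$ consist of pairwise inequivalent elements, let $d_1,d_2$ fill two distinct cuts with $E(d_1,d_2)$, and let $A=I\cup\{b\}$ with $b$ inequivalent to all of $I$. At stage $2$ the formula $E(x,d'_1)\in\tp(e_2/Id'_1)$ is satisfied by no element of $I$ at all, so no tail of $I$ at $\mathfrak{c}_2$ witnesses compatibility with $\neg E(x,b)\in\lim(\mathfrak{c}_2/A)$, and no alternation pattern along $I$ can be extracted from a putative inconsistency. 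Consistency holds here only because $d'_1$ realizes $\lim(\mathfrak{c}_1/A)$, so $\neg E(d'_1,b)$ holds: the argument must pass through the inductive hypothesis on the earlier $d'_j$, which your sketch never uses; and for three or more cuts the stated invariants (each $\tp(d'_j/A)$ individually, plus the joint type over $I$) do not determine the joint type of $d'_{<i}$ over $A$, so it is not even clear that the inductive step as you formulate it can be verified from those invariants alone. This is the genuinely missing idea; for an argument that closes it, see \cite[Lemma 2.8]{Distal}.
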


Finally, we will use the following finitary version of a characteristic property of NIP theories.

\begin{fact}\label{fac: NIP finite alternation} Let $\phi(x,y)$ be an NIP formula. Then there are
  some $k,N \in \mathbb{N}$ such that for any indiscernible sequence $I = (a_i : i < n)$ from
  $M^{|x|}$ with $n \geq N$ and any $b \in M^{|y|}$, the set $\phi(I,b)$ is a disjoint union of at
  most $k$ intervals.
\end{fact}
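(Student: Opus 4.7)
The plan is to reduce this finitary claim to the standard infinitary characterization of NIP: a formula $\phi(x,y)$ is NIP if and only if there exists some $k_\phi \in \mathbb{N}$ such that for every infinite indiscernible sequence $(a_i : i < \omega)$ (in any model) and every parameter $b$, the sequence of truth values $(\phi(a_i,b) : i < \omega)$ alternates at most $k_\phi$ times. This is the classical Shelah dichotomy and will be taken as the starting point. Since a finite sequence of truth values with at most $k_\phi$ alternations decomposes into at most $k_\phi + 1$ maximal monochromatic blocks, in particular the set $\phi(I,b)$ consists of at most $k_\phi + 1$ intervals whenever the sequence $I$ is infinite.

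To obtain a uniform bound for \emph{finite} indiscernible sequences, I would argue by contradiction using compactness. Suppose no pair $(k,N)$ works for $\phi$. Then for every $k \in \mathbb{N}$, there exist an indiscernible sequence $I_k = (a^k_i : i < n_k)$ and a parameter $b_k \in M^{|y|}$ such that $\phi(I_k, b_k)$ contains strictly more than $k$ maximal intervals, and hence the truth value of $\phi(a^k_i, b_k)$ alternates at least $k$ times as $i$ ranges over $I_k$. Choose indices $i_0 < i_1 < \dotsb < i_k$ in $n_k$ witnessing these alternations; since subsequences of indiscernible sequences are again indiscernible, the sequence $(a^k_{i_0}, \dotsc, a^k_{i_k})$ is a length-$(k{+}1)$ indiscernible sequence along which $\phi(x,b_k)$ alternates exactly $k$ times.

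Now consider the partial type $\Sigma(x_0, x_1, \dotsc ; y)$ over $\emptyset$ asserting both that $(x_i)_{i < \omega}$ is indiscernible and that, for each $m \in \mathbb{N}$, $\phi(x_i, y)$ alternates at least $m$ times along $(x_i)$. Every finite fragment of $\Sigma$ is realized by one of the extracted indiscernible subsequences together with its witness $b_k$ for sufficiently large $k$, so $\Sigma$ is finitely satisfiable. By compactness, in a sufficiently saturated elementary extension of $\CM$ we obtain an infinite indiscernible sequence $(c_i)_{i < \omega}$ and a point $b^* $ such that $\phi(c_i, b^*)$ alternates infinitely often, contradicting NIP of $\phi$. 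Taking $k = k_\phi + 1$ and $N = 2$ then yields the stated conclusion.

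The main (minor) technical point is the compactness extraction that converts the finite witnesses into an infinite one; the key observation making this routine is that alternation along the full indiscernible sequence is already witnessed on any length-$(k{+}1)$ subsequence, so indiscernibility survives the thinning and the alternation count is preserved.
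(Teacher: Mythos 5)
Your proof is correct and follows essentially the same route as the paper, which simply cites the standard characterization of NIP via bounded (eventually constant) alternation on indiscernible sequences ``plus compactness''; your write-up just makes that compactness extraction explicit, and the thinning-to-an-alternating-subsequence step is exactly the right way to keep the type finitely satisfiable. The only blemish is the last sentence: the argument by contradiction establishes the existence of \emph{some} pair $(k,N)$ but does not justify the specific values $k = k_\phi+1$, $N=2$ --- harmless here, since the statement only asserts existence.
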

\begin{proof}
  Follows from the usual characterization of NIP via bounded alternation on indiscernible sequences
  (see e.g. \cite[Proposition 4]{AdlerNIP}) plus compactness.
\end{proof}

\begin{thm}\label{thm: characterizing distality} Let $T$ be an NIP theory. The following are
  equivalent:
  \begin{enumerate}
  \item $T$ is distal.
  \item For any definable relation $R(x,y)$ and any global generically stable measures
    $\mu_1, \mu_2$ there are some definable $X \subseteq \UU^{|x|}, Y \subseteq \UU^{|y|}$ which are
    $R$-homogeneous and satisfy $\mu_1(X) > 0, \mu_2(Y) > 0$.
  \item For any definable relation $R(x,y)$ there is some $\delta>0$ and some formulas
    $\psi_1(x,z_1), \psi_2(x,z_2)$ such that for all finite
    $A \subseteq \UU^{|x|}, B \subseteq \UU^{|y|}$ there are some $c_i \in \UU^{|z_i|}, i=1,2$ such
    that $|\psi_1(A,c_1)| \geq \delta |A|, |\psi_2(B,c_2)| \geq \delta |B|$ and the pair of sets
    $\psi_1(A,c_1), \psi_2(B,c_2)$ is $R$-homogeneous.
  \end{enumerate}
\end{thm}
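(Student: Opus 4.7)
My plan is to close the cycle of equivalences by proving the easy directions $(1)\Rightarrow(2)$ and $(1)\Rightarrow(3)$ from the paper's main results, and the converses contrapositively from a non-distal indiscernible sequence together with Strong Base Change.

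For $(1)\Rightarrow(2)$, apply Theorem~\ref{thm:delta-main} with $\CM:=\UU$: a global generically stable $\mu_i$ is in particular generically stable over $\UU$, and the theorem produces definable $X,Y$ with $\mu_1(X),\mu_2(Y)\ge\delta>0$. For $(1)\Rightarrow(3)$, apply Corollary~\ref{BasicRamseyForCountingMeasures} to the formula $\phi(x,y,z):=R(x,y)$ with a trivial $z$-tuple, yielding $\delta>0$ and uniform formulas $\psi_1,\psi_2$ that witness (3) on any finite $A,B$.

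For $(2)\Rightarrow(1)$ by contrapositive, assume $T$ is not distal. By Fact~\ref{fac: distality indiscernibles}, some indiscernible sequence in a model of $T$ is not distal; after extracting an EM-type and reindexing, I may take $I=(a_i:i\in[0,1])\subseteq\UU^d$ dense indiscernible and non-distal, with decomposition $I=I_1+I_2+I_3$, witnesses $e_1,e_2$ filling the two cuts of $I_2$ individually but not jointly (each of $I_1+e_1+I_2+I_3$ and $I_1+I_2+e_2+I_3$ is indiscernible, but $I_1+e_1+I_2+e_2+I_3$ is not), and a formula $\theta(x,y,\bar u)$ with $\bar u\subseteq I$ witnessing the failure of joint indiscernibility. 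Let $\mu$ be the average measure of $I$, which is global generically stable by Fact~\ref{AverageMeasures}(1); set $\mu_1=\mu_2=\mu$ and $R(x,y):=\theta(x,y,\bar u)$. Suppose, for contradiction, that a definable $R$-homogeneous pair $(X,Y)$ has $\mu(X),\mu(Y)>0$, say $X\times Y\subseteq R$. By Fact~\ref{AverageMeasures}(2) there are polarized cuts $\mathfrak{c}_1,\mathfrak{c}_2$ of $I$ with $X\in\lim(\mathfrak{c}_1/\UU)$ and $Y\in\lim(\mathfrak{c}_2/\UU)$; the density of $I$ and an order-automorphism of $[0,1]$ let me align $(\mathfrak{c}_1,\mathfrak{c}_2)$ with the cuts bounding $I_2$ (translating $\bar u$ accordingly). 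Let $A$ be a parameter set containing $I$ together with the parameters defining $X,Y$. Applying Fact~\ref{StrongBaseChange} twice over $A$ --- once with the joint type $\tp(e_1,e_2/I)$ and once with a distinct joint type $\tp(f_1,f_2/I)$ coming from the non-distality (both filling $\mathfrak{c}_1,\mathfrak{c}_2$ individually) for which $\theta$ takes the opposite value --- I obtain pairs $(e_1',e_2'),(e_1'',e_2'')$ inside $X\times Y$ (each coordinate realizes the appropriate limit type over $A$, hence lies in the corresponding $A$-definable set), with $R(e_1',e_2')$ and $\neg R(e_1'',e_2'')$, contradicting homogeneity.

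The direction $(3)\Rightarrow(1)$ I would handle via $(3)\Rightarrow(2)\Rightarrow(1)$: approximate generically stable $\mu_1,\mu_2$ by counting measures on finite VC-$\varepsilon$-nets using Proposition~\ref{UniformEpsilonNetGenStable}, apply (3) on these finite nets, and transfer the homogeneity back to the full definable sets by combining the $\varepsilon$-net estimates with a compactness argument exploiting the polynomial bound on $(\psi_1,\psi_2)$-traces in NIP theories (Fact~\ref{fac: PolyTypesNIP}). The principal obstacle is the \emph{double} application of Strong Base Change in $(2)\Rightarrow(1)$: realizing two distinct joint types over the \emph{same} parameter set $A$, each respecting the prescribed limit types at both cuts and differing on $\theta$. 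The rest is bookkeeping afforded by density of $I$ and the resulting rich automorphism group of the indiscernible sequence.
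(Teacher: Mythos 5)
Your forward directions $(1)\Rightarrow(2),(3)$ are fine and coincide with the paper's. The problems are in the two converses.

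In $(2)\Rightarrow(1)$ the choice $\mu_1=\mu_2=$ the average measure of the \emph{whole} sequence $I$ breaks the argument. For an arbitrary definable pair $X,Y$ of positive measure, Fact~\ref{AverageMeasures}(2) only gives you \emph{some} polarized cuts $\mathfrak{c}_1,\mathfrak{c}_2$ of $I$ with $X\in\lim(\mathfrak{c}_1/\UU)$, $Y\in\lim(\mathfrak{c}_2/\UU)$; these may coincide, be in the wrong order, or lie far from the parameter blocks $\bar u=I_1'I_2'I_3'$ (e.g.\ both beyond $I_3'$), in which case no contradiction with $R(x,y)=\theta(x,y,\bar u)$ is available. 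Your proposed repair --- realigning $(\mathfrak{c}_1,\mathfrak{c}_2)$ with the cuts bounding $I_2$ by an order-automorphism of the index set, ``translating $\bar u$ accordingly'' --- does not work: the induced automorphism of $\UU$ moves $X$, $Y$ and $\bar u$ coherently, so the relative configuration of the cuts with respect to the parameters of $R$ is unchanged; and you cannot instead change $\bar u$ by hand, because $R$ and the measures must be fixed \emph{before} the adversary chooses $X,Y$. The paper avoids this entirely by taking $\mu$ and $\nu$ to be the average measures of two \emph{intervals} $J,K$ of $I$ chosen inside the regions between $I_1'$ and $I_2'$, respectively between $I_2'$ and $I_3'$: then every limit cut of a positive-measure set automatically sits in the correct region, one application of Fact~\ref{StrongBaseChange} produces the negative instance inside $X\times Y$, and the positive instance comes for free from actual sequence points of $J\cap X$ and $K\cap Y$ (so your ``double'' base-change for the positive side is also unnecessary, though it could be made to work once the cuts are in the right place).

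The route $(3)\Rightarrow(2)$ via $\varepsilon$-nets also has a genuine gap. Applying (3) to finite approximating sets $A,B$ gives a pair of finite sets $\psi_1(A,c_1),\psi_2(B,c_2)$ that is $R$-homogeneous \emph{as subsets of the nets}; the $\varepsilon$-net estimates let you conclude $\mu_1(\psi_1(x,c_1)),\mu_2(\psi_2(y,c_2))\geq\delta-\varepsilon$, but they do not transfer homogeneity to the full definable sets $\psi_1(\UU,c_1),\psi_2(\UU,c_2)$ --- at best you get that $R$ has $\mu_1\otimes\mu_2$-density close to $0$ or $1$ on the product, and upgrading high density to genuine homogeneity is exactly the step that uses distality (Claim~\ref{claim:basic-step} invokes Theorem~\ref{thm:delta-main}), so this route is circular as sketched; neither Fact~\ref{fac: PolyTypesNIP} nor compactness can exclude a measure-zero set of exceptions. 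The paper instead proves $(3)\Rightarrow(1)$ directly: from a non-distal sequence it uses the finite-alternation bound (Fact~\ref{fac: NIP finite alternation}) to reduce to homogeneous pairs of \emph{intervals} of proportional length in finite indiscernible sequences, and then builds, by iterating Fact~\ref{StrongBaseChange} over many cuts, finite indiscernible sequences with an anti-diagonal pattern of negative instances that defeats every such pair of long intervals. Some argument of this kind is needed in place of your last paragraph.
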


\begin{proof}

  (1) implies (2) and (3) follow from Corollaries~\ref{DensityForGraphs} and
  \ref{BasicRamseyForCountingMeasures}.\\

  (2) implies (1). Assume that $I=(a_i)_{i \in \mathcal{I}}$ is a non-distal indiscernible sequence,
  with $\mathcal{I} = [0,1]$. This means that $I$ can be written as $I = I_1 + I_2 + I_3$ (where
  $I_j = (a_i : i \in \mathcal{I}_j)$ and
  $\mathcal{I}_1, \mathcal{I}_2^*, \mathcal{I}_2, \mathcal{I}_3^*$ are without last elements) in
  such a way that there are some $c,d \in \UU$ such that $I_1+c+I_2+I_3$ and $I_1+I_2+d+I_3$ are
  indiscernible, but $I_1+c+I_2+d+I_3$ is not.

  Then there is a formula $\phi(I'_{1}, x, I'_2, y, I'_3)$ with some finite $I'_j \subset I_j$, say
  $I'_j = (a_i : i \in \mathcal{I}'_j), \mathcal{I}'_j \subset \mathcal{I}_j$ for
  $j\in \{ 1, 2, 3 \}$, such that $\phi(I'_1, a_j, I'_2, a_k, I'_3)$ holds for any
  $\mathcal{I}'_1 < j < \mathcal{I}'_2 < k < \mathcal{I}'_3$, but
  $\models \neg \phi(I'_1, c, I'_2, d, I'_3)$. Let $[j_1, j_2]$ be some interval of $\mathcal{I}$
  between $\mathcal{I}'_1$ and $\mathcal{I}'_2$, and $[k_1, k_2]$ some interval between
  $\mathcal{I}'_2$ and $\mathcal{I}'_3$. Let
  $J = (a_i : i \in [j_1, j_2]), K=(a_i : i \in [k_1,k_2])$. Let $\mu$ be the average measure of
  $J$, and $\nu$ the average measure of $K$ (we may assume that both sequences are indexed by
  $[0,1]$ by taking an order preserving bijection). Then both $\mu$ and $\nu$ are generically stable
  by Fact~\ref{AverageMeasures}(1).

  Now assume that $X = \xi(\UU)$ and $Y = \chi(\UU)$ are definable subsets of $\UU^{|x|}$ with
  $\mu( X ) >0$ and $\nu( Y ) > 0$, where $\xi, \chi$ are formulas with parameters in some small
  model $M \supseteq I$. By Fact~\ref{AverageMeasures}(2) it follows that there is some polarized
  Dedekind cut $\mathfrak{c}$ of $J$ such that $\xi(x) \in \lim_J (\mathfrak{c}/M)$, and some
  polarized Dedekind cut $\mathfrak{d}$ of $K$ such that $\chi(x) \in \lim_K (\mathfrak{d}/M)$.

  It follows by compactness, indiscernibility of $I$ and taking an automorphism of $\UU$ that there
  is some $c'$ filling $\mathfrak{c}$ and $d'$ filling $\mathfrak{d}$ (separately, as cuts in $I$)
  such that $\neg \phi(I'_1, c', I'_2, d', I'_3)$ holds.  By Fact~\ref{StrongBaseChange} we can find
  some $c'', d''$ such that still $\neg \phi(I'_1, c'', I'_2, d'', I'_3)$ holds, but moreover
  $c'' \models \lim(\mathfrak{c}/M), d'' \models \lim(\mathfrak{d}/M)$. In particular,
  $\models \xi(c'') \land \chi(d'')$. On the other hand, by the choice of $\phi$ and the definition
  of $\mu, \nu$ there are some $j<k$ in $I$ such that
  $\models \xi(a_j) \land \chi(a_k) \land \phi(I'_1, a_j, I'_2, a_k, I'_3)$. This shows that the
  relation $R(x,y) = \phi(I'_1, x, I'_2, y, I'_3)$ is not homogeneous on $X \times Y$. As $X,Y$ were
  arbitrary definable sets of positive measure, we conclude.\\

  (3) implies (1).  Assume that $T$ is not distal, and we will show that (3) cannot hold. Working in
  $\UU$ we have some $I_i = (a^i_j : j \in \mathbb{Q})$ for $i\in \{ 1,2,3 \}$ and $a,b$ such that
  $I = I_1 + I_2 + I_3$, $I_1+a+I_2 + I_3$ and $I_1 + I_2 + b + I_3$ are indiscernible, but
  $I_1 + a +I_2 + b + I_3$ is not. This implies in particular that there is a formula $\phi \in L$
  such that $\models \neg \phi ( J'_1, a, J'_2, b, J'_3)$ for some finite $J'_i \subset I_i$ with
  $J'_i = (a_j : j \in \mathcal{J}_j)$, but $\models \phi (J'_1, a', J'_2, b', J'_3)$ for any
  $a', b' \in I$ such that $\mathcal{J}'_1 < a' < \mathcal{J}'_2 < b' < \mathcal{J}'_3$.

  Let now $R(x,y; c) := \phi(J'_1,x,J'_2,y,J'_3)$ with $c := J'_1 J'_2 J'_3$.  Assume that there are
  $\psi_i(x,y), i \in \{ 1, 2 \}$ and $\delta > 0$ as required by (3) for $R$. As $T$ is NIP, it
  follows by Fact~\ref{fac: NIP finite alternation} that there are some $k,N \in \omega$ such that
  for any indiscernible sequence $K= (a_j : j<n)$ with $n \geq N$ and any
  $d_i \in \UU, i \in \{ 1,2 \}$, the set $\psi_i(K,d_i)$ is a disjoint union of at most $k$
  intervals. Without loss of generality it then follows from (3) that there is some $k' \in \omega$
  such that for any finite indiscernible sequences $A=(a_j : j < n)$ and $B= (b_j : j < n)$ with
  $n \geq N$ we can find \emph{intervals} $A_0 \subseteq A, |A_0| \geq \frac{|A|}{k'}$ and
  $B_0 \subseteq B, |B_0| \geq \frac{|B|}{k'}$ such that $(A_0,B_0)$ is $R(x,y;c)$-homogeneous. We
  are going to show that this property fails.

  Re-enumerating the sequence we may assume that $I_1 = I_{1,0}+I_{1,1} + \ldots$ and
  $I_3 = \ldots + I_{3,1} + I_{3,0}$, with each of $I_{i,j}$ indexed by $\mathbb{Q}$, and that
  $J_1' \subset I_{1,0}, J'_3 \subset I_{3,0}$. Let $I'_1 := I_1 \setminus I_{1,0}$,
  $I'_3 = I_3 \setminus I_{3,0}$.

  By indiscernibility of $I$, automorphism and compactness for any Dedekind cuts $\frak{c}$ of
  $I'_1$ and $\frak{c}'$ of $I'_3$ we can find some $a'$ and $b'$ which fill those cuts (separately,
  viewed as cuts in $I$) and such that $\models \neg \phi ( J'_1, a', J'_2, b', J'_3)$ holds.

  For each $i \in \omega$, let $(\frak{c}_{i,j} : j \in \omega)$ be an infinite increasing sequence
  of cuts of $I_{1,i}$, and let $(\frak{c}'_{i,j} : j \in \omega)$ be a decreasing sequence of cuts
  of $I_{3,i}$. By the previous remark, let $a_{i,j}$ and $b_{i,j}$ be such that $a_{i,j}$ fills the
  cut $\frak{c}_{i,j}$, $b_{i,j}$ fills the cut $\frak{c}'_{i,j}$ and
  $\models \neg \phi ( J'_1, a_{i,j}, J'_2, b_{i,j}, J'_3)$ holds.

  Next using Fact~\ref{StrongBaseChange} and induction we can choose $a'_{i,j}, b'_{i,j}$ such that:
  \begin{itemize}
  \item $\tp(a'_{i,j} b'_{i,j}/ I) = \tp(a_{i,j} b_{i,j} / I)$,

  \item $\tp(a'_{i,j}/I A_{i,j}) = \lim(\frak{c}_{i,j}/ I A_{i,j})$, where
    $A_{i,j} = \{ a'_{i,j'} : j' < j \} \cup \{a'_{i',j'}: i'<i, j' \in \omega \}$,

  \item $\tp(b'_{i,j}/I B_{i,j}) = \lim(\frak{c}'_{i,j}/ I B_{i,j})$, where
    $B_{i,j} = \{ b'_{i,j'} : j' < j \} \cup \{b'_{i',j'}: i'<i, j' \in \omega \}$.

  \end{itemize}

  From this we have:
  \begin{enumerate}
  \item[(a)] For any $i,j \in \omega$ we have that
    $\models \neg \phi ( J'_1, a'_{i,j}, J'_2, b'_{i,j}, J'_3)$ holds.
  \item[(b)] The sequence $I'_1$ with all the $\{ a'_{i,j} : i,j \in \omega \}$ added in the
    corresponding cuts is an indiscernible sequence,
  \item[(c)] The sequence $I'_3$ with all the $\{ b'_{i,j} : i,j \in \omega \}$ added in the
    corresponding cuts is an indiscernible sequence,
  \item[(d)] $\models \phi (J_1, a', J_2, b', J_3)$ holds for any $a' \in I'_1, b' \in I'_3$.
  \end{enumerate}

  Here (a) follows from the first bullet and the choice of $a_{i,j}, b_{i,j}$; using the second
  bullet above it is easy to show that (b) holds, and that the sequence has the same EM-type as $I$
  (similarly for (c)); (d) was already observed above.

  In view of (a)--(d) above, for any $m \in \omega$ we can choose indiscernible sequences
  $A=(a_j : j < 2 k' m)$ and $B=(b_j : j < 2 k' m)$ such that for any $l_1, l_2 < 2k'$ we have
  $\models \neg R(a_{l_1m + l_2}, b_{l_2m+l_1};c)$ and $\models R(a_{l_1m+j_1}, b_{l_2m + j_2};c)$
  for any $j_1, j_2 \in (2k',m)$. It then follows that for all sufficiently large $m$, for any
  choice of an interval $A_0 \subseteq A$ with $|A_0| \geq \frac{|A|}{k'} \geq 2m$ and
  $B_0 \subseteq B$ with $|B_0| \geq \frac{|B|}{k'} \geq 2m$, the sets $(A_0, B_0)$ cannot be
  $R(x,y;c)$-homogeneous --- a contradiction to the choice of $k'$.
\end{proof}

\begin{rem}
  Pierre Simon has also observed a version of the implication (2) $\implies$ (1) in 
  Theorem~\ref{thm: characterizing distality} after seeing a preliminary version of our results.
\end{rem}

The above proof shows that an NIP theory is distal if and only if the property (3) in Theorem
\ref{thm: characterizing distality} holds for all finite indiscernible sequences $A,B$. As the
following proposition shows, in an arbitrary NIP theory the property (3) almost holds for $A,B$
indiscernible sequences, except for the uniform definability of one of the homogeneous subsets.

\begin{prop}
  Let $\phi\left(x,y\right)$ be NIP. Then there is $\varepsilon>0$ depending only on $\phi$ such
  that for any $A=\left(a_{i}:i<n\right)$ and $B=\left(b_{i}:i<m\right)$ indiscernible sequences (in
  fact $\Delta$-indiscernible for some finite $\Delta$ depending just on $\phi$ is enough) there are
  $A_{0}\subseteq A,B_{0}\subseteq B$ such that:
  $\left|A_{0}\right|\geq\varepsilon\left|A\right|,\left|B_{0}\right|\geq\varepsilon\left|B\right|$
  and either $\phi\left(a,b\right)$ holds for all $a\in A_{0},b\in B_{0}$ or
  $\neg\phi\left(a,b\right)$ holds for all $a\in A_{0},b\in B_{0}$.\end{prop}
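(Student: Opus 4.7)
The key tool is Fact~\ref{fac: NIP finite alternation}: applied to $\phi(x,y)$, it yields constants $k, N$ depending only on $\phi$ such that for any indiscernible sequence $I = (a_i : i < n)$ of length $n \geq N$ from $M^{|x|}$ and any parameter $b$, the set $\phi(I,b)$ is a union of at most $k$ intervals, and consequently the function $i \mapsto \phi(a_i, b)$ has at most $2k$ alternation points. Since the dual of a VC-class is again a VC-class, the formula $\phi^{\mathrm{op}}(y,x) := \phi(x,y)$ is also NIP; applying Fact~\ref{fac: NIP finite alternation} to $\phi^{\mathrm{op}}$ gives the symmetric bound for $\phi(a, J)$ when $J = (b_j : j < m)$ is an indiscernible sequence of length $m \geq N$ from $M^{|y|}$. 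The finite $\Delta$ alluded to in the statement is exactly the finite collection of formulas relative to which indiscernibility of the sequences already suffices for this conclusion --- it comes out of the compactness argument proving Fact~\ref{fac: NIP finite alternation}.

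The main step is then a straightforward double pigeonhole on the $0/1$ matrix $M_{ij} := \mathbf{1}_{\phi(a_i, b_j)}$. Assume first that $m$ is sufficiently large compared to $k$ and $N$. Partition $B$ into $K := 4k$ consecutive blocks $B^1 < \dotsb < B^K$, each of size at least $m/(2K)$. For each row $a_i \in A$, the function $\phi(a_i, \cdot)$ on $B$ has at most $2k$ alternation points (by Fact~\ref{fac: NIP finite alternation} applied to $\phi^{\mathrm{op}}$ and the indiscernible sequence $B$), so these alternations can lie in at most $2k$ of the $K = 4k$ blocks; on the remaining at least $2k$ blocks the function $\phi(a_i, \cdot)$ is constant. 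Double-counting the pairs $(i, \ell)$ for which $\phi(a_i, \cdot)$ is constant on $B^\ell$ gives at least $2kn$ such pairs, and pigeonhole over the $K$ blocks produces a single block $B^{\ell^*}$ that is constant for at least $n/2$ of the rows; call this set of rows $A'$. A second pigeonhole over the binary constant value then extracts $A_0 \subseteq A'$ with $|A_0| \geq n/4$ on which this constant value agrees, say to some $c \in \{0,1\}$. Setting $B_0 := B^{\ell^*}$, the pair $(A_0, B_0)$ is $\phi$-homogeneous with $|A_0| \geq n/4$ and $|B_0| \geq m/(8k)$. The residual small cases (e.g.\ $m$ or $n$ smaller than a constant depending on $\phi$) are disposed of by the dual one-sided version of the same argument, or by taking singletons on both sides when both are small.

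Once Fact~\ref{fac: NIP finite alternation} is in hand there is no real obstacle --- the argument reduces to a double pigeonhole on $0/1$ matrices of bounded row-alternation, and the only conceptual input is the appeal to NIP to bound alternations uniformly. Two remarks: first, indiscernibility of $A$ is never used in the main regime (it only appears in the symmetric handling of the small-$m$ case), so one-sided $\Delta$-indiscernibility of $B$ essentially suffices; second, the produced $\varepsilon$ can be taken to be any positive value below $\min\{1/(8k),\, 1/N\}$, with $k, N$ furnished by Fact~\ref{fac: NIP finite alternation} applied to $\phi$, confirming that $\varepsilon$ depends only on $\phi$.
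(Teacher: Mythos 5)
Your proof is correct and is essentially the paper's own argument: bound the alternation of $\phi(a,\cdot)$ on the indiscernible sequence $B$ via NIP (the paper implicitly uses the opposite formula just as you do), split $B$ into boundedly many consecutive blocks, and pigeonhole over blocks and then over the truth value, yielding $\varepsilon$ of order $1/k$. The only differences are cosmetic — $4k$ blocks with a double count instead of $k+1$ blocks with a direct pigeonhole, and your explicit (and harmless) handling of the short-sequence edge cases.
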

\begin{proof} By Fact~\ref{fac: NIP finite alternation} there is $k$ such that $\phi\left(x,y\right)$
  can't alternate on an indiscernible sequence more than $k$ times.  We divide $B$ into $k+1$
  intervals of almost equal length. Namely, for $i<k+1$ let
  $$B_{i}=\left\{ b_{j}:i\times \frac{m}{k+1}l \leq j<\left(i+1\right)\times \frac{m}{k+1}
  \right\}.$$
  Then for every $a\in A$ there is some interval $B_{i_{a}}$ not containing any alternation
  points. It follows that for some $i'<k+1$, there are $\frac{\left|A\right|}{k+1}$-many points in
  $A$ which do not alternate inside $B_{i'}$, and then at least half of them satisfy $\phi$ or
  $\neg\phi$. So we can take $\varepsilon=\frac{1}{2\left(k+1\right)}$.\end{proof}

\bibliographystyle{amsplain}

\begin{thebibliography}{10}

\bibitem{AdlerNIP} Hans Adler, \emph{{An introduction to theories without the independence
      property}}, Archive for Mathematical Logic (accepted).

\bibitem{AlonEtAl} Noga Alon, J\'{a}nos Pach, Rom Pinchasi, Rado\v{s} Radoi\v{c}i\'{c}, and Micha
  Sharir, \emph{Crossing patterns of semi-algebraic sets}, J. Comb. Theory Ser.  A \textbf{111}
  (2005), no.~2, 310--326.

\bibitem{alon2001ramsey} Noga Alon, J{\'a}nos Pach, and J{\'o}zsef Solymosi, \emph{Ramsey-type
    theorems with forbidden subgraphs}, Combinatorica \textbf{21} (2001), no.~2, 155--170.

\bibitem{VCD1} Matthias Aschenbrenner, Alf Dolich, Deirdre Haskell, Dugald Macpherson, and Sergei
  Starchenko, \emph{{Vapnik-Chervonenkis density in some theories without the independence property,
      I}}, Trans. Amer. Math. Soc., to appear (arXiv:1109.5438) (2011).

\bibitem{Basu2} Saugata Basu, \emph{Combinatorial complexity in o-minimal geometry}, Proceedings of
  the London Mathematical Society \textbf{100} (2010), no.~2, 405--428.

\bibitem{belegradek2000quasi} Oleg Belegradek, Ya'acov Peterzil, and Frank Wagner,
  \emph{Quasi-o-minimal structures}, Journal of Symbolic Logic (2000), 1115--1132.

\bibitem{DefAmenableNIP} Artem Chernikov and Pierre Simon, \emph{Definably amenable {NIP} groups},
  Preprint, arXiv:1502.04365 (2015).

\bibitem{ExtDefII} \bysame, \emph{Externally definable sets and dependent pairs {II}}, Transactions
  of the American Mathematical Society \textbf{367} (2015), no.~7, 5217--5235.

\bibitem{EHStable} Artem Chernikov and Sergei Starchenko, \emph{A note on the {E}rd{\H o}s-{H}ajnal
    property for stable graphs}, Preprint, arXiv:1504.08252 (2015).

\bibitem{chudnovsky2014erdos} Maria Chudnovsky, \emph{The {E}rd{\"o}s--{H}ajnal {C}onjecture --- {A}
    {S}urvey}, Journal of Graph Theory \textbf{75} (2014), no.~2, 178--190.

\bibitem{conlon2014ramsey} David Conlon, Jacob Fox, J{\'a}nos Pach, Benny Sudakov, and Andrew Suk,
  \emph{Ramsey-type results for semi-algebraic relations}, Transactions of the American Mathematical
  Society \textbf{366} (2014), no.~9, 5043--5065.

\bibitem{dickmann1987elimination} Max~A Dickmann, \emph{Elimination of quantifiers for ordered
    valuation rings}, The Journal of Symbolic Logic \textbf{52} (1987), no.~01, 116--128.

\bibitem{erdos1989ramsey} Paul Erd{\"o}s and Andr{\'a}s Hajnal, \emph{Ramsey-type theorems},
  Discrete Applied Mathematics \textbf{25} (1989), no.~1, 37--52.

\bibitem{erdos2000ramsey} Paul Erdos, Andr{\'a}s Hajnal, and J{\'a}nos Pach, \emph{A {R}amsey-type
    theorem for bipartite graphs}, Geombinatorics \textbf{10} (2000), no.~DCG-ARTICLE-2000-001,
  64--68.

\bibitem{erdos1935combinatorial} Paul Erd{\"o}s and George Szekeres, \emph{A combinatorial problem
    in geometry}, Compositio Mathematica \textbf{2} (1935), 463--470.

\bibitem{GromovEtAl} Jacob Fox, Mikhail Gromov, Vincent Lafforgue, Assaf Naor, and J{\'a}nos Pach,
  \emph{Overlap properties of geometric expanders}, Journal f{\"u}r die reine und angewandte
  Mathematik (Crelles Journal) \textbf{2012} (2012), no.~671, 49--83.

\bibitem{FoxPach} Jacob Fox and J{\'a}nos Pach, \emph{{Erd{\H o}s-Hajnal-type Results on
      Intersection Patterns of Geometric Objects}}, Horizons of combinatorics, Springer Berlin
  Heidelberg, Berlin, Heidelberg, 2008, pp.~79--103.

\bibitem{fox2015polynomial} Jacob Fox, Janos Pach, and Andrew Suk, \emph{A polynomial regularity
    lemma for semi-algebraic hypergraphs and its applications in geometry and property testing},
  Preprint, arXiv:1502.01730 (2015).

\bibitem{garcia2014pseudofinite} Dario Garcia, Dugald Macpherson, and Charles Steinhorn,
  \emph{Pseudofinite structures and simplicity}, Preprint arXiv:1409.8635 (2014).

\bibitem{haskell1997version} Deirdre Haskell and Dugald Macpherson, \emph{A version of o-minimality
    for the p-adics}, The Journal of Symbolic Logic \textbf{62} (1997), no.~04, 1075--1092.

\bibitem{NIP2} Ehud Hrushovski and Anand Pillay, \emph{On {NIP} and invariant measures}, J.
  Eur. Math. Soc. (JEMS) \textbf{13} (2011), no.~4, 1005--1061. \MR{2800483 (2012e:03069)}

\bibitem{NIP3} Ehud Hrushovski, Anand Pillay, and Pierre Simon, \emph{Generically stable and smooth
    measures in {NIP} theories}, Trans. Amer. Math. Soc. \textbf{365} (2013), no.~5,
  2341--2366. \MR{3020101}

\bibitem{kaplan2011artin} Itay Kaplan, Thomas Scanlon, and Frank~O Wagner, \emph{Artin-{S}chreier
    extensions in {NIP} and simple fields}, Israel Journal of Mathematics \textbf{185} (2011),
  no.~1, 141--153.

\bibitem{komlos1996szemeredi} J~Koml{\'o}s and M~Simonovits, \emph{Szemer{\'e}di's regularity lemma
    and its applications in graph theory}, Combinatorics, Paul Erdos is eighty \textbf{2} (1993),
  295--352.

\bibitem{Laskowski} Michael~C. Laskowski, \emph{Vapnik-{C}hervonenkis classes of definable sets},
  J. London Math. Soc. (2) \textbf{45} (1992), no.~2, 377--384.

\bibitem{laskowski2003karp} Michael~C Laskowski and Saharon Shelah, \emph{Karp complexity and
    classes with the independence property}, Annals of Pure and Applied Logic \textbf{120} (2003),
  no.~1, 263--283.

\bibitem{lovasz2010regularity} L{\'a}szl{\'o} Lov{\'a}sz and Bal{\'a}zs Szegedy, \emph{Regularity
    partitions and the topology of graphons}, An irregular mind, Springer, 2010, pp.~415--446.

\bibitem{macintyre1976definable} Angus Macintyre, \emph{On definable subsets of p-adic fields}, The
  Journal of Symbolic Logic \textbf{41} (1976), no.~03, 605--610.

\bibitem{malliaris2014regularity} Maryanthe Malliaris and Saharon Shelah, \emph{Regularity lemmas
    for stable graphs}, Transactions of the American Mathematical Society \textbf{366} (2014),
  no.~3, 1551--1585.

\bibitem{marker2002model} David Marker, \emph{Model theory: an introduction}, Springer Science \&
  Business Media, 2002.

\bibitem{ma} Ji{\v{r}}{\'{\i}} Matou{\v{s}}ek, \emph{Lectures on discrete geometry}, Graduate Texts
  in Mathematics, vol. 212, Springer-Verlag, New York, 2002.

\bibitem{pach2006diameter} J{\'a}nos Pach, Rado{\v{s}} Radoi{\v{c}}i{\'c}, and Jan Vondr{\'a}k,
  \emph{On the diameter of separated point sets with many nearly equal distances}, European Journal
  of Combinatorics \textbf{27} (2006), no.~8, 1321--1332.

\bibitem{PachSolymosi} J\'{a}nos Pach and J\'{o}zsef Solymosi, \emph{Crossing patterns of segments},
  J. Comb. Theory Ser. A \textbf{96} (2001), no.~2, 316--325.

\bibitem{pillay2013remarks} Anand Pillay and Sergei Starchenko, \emph{Remarks on {T}ao's algebraic
    regularity lemma}, Preprint, arXiv:1310.7538 (2013).

\bibitem{Rudnev} Misha Rudnev, \emph{Szemer\'{e}di-{T}rotter theorem and applications}, Preprint,
\url{http://www.maths.bris.ac.uk/~maxmr/stlectures.pdf} (2004).

\bibitem{scanlonminimality} Thomas Scanlon, \emph{{O}-minimality as an approach to the
    {A}ndr{\'e}-{O}ort conjecture}, Panoramas et Synth{\`e}ses, to appear.

\bibitem{ShelahClassification} S.~Shelah, \emph{Classification theory and the number of
    non-isomorphic models}, second ed., Studies in Logic and the Foundations of Mathematics, vol.~92,
  North-Holland Publishing Co., Amsterdam, 1990.

\bibitem{Distal} Pierre Simon, \emph{Distal and non-distal {NIP} theories}, Annals of Pure and
  Applied Logic \textbf{164} (2013), no.~3, 294 -- 318.

\bibitem{SimBook} Pierre Simon, \emph{A guide to {NIP} theories}, vol.~44, Cambridge University
  Press, 2015.

\bibitem{tao2012expanding} Terence Tao, \emph{Expanding polynomials over finite fields of large
    characteristic, and a regularity lemma for definable sets}, Preprint, arXiv:1211.2894 (2012).

\bibitem{van1998tame} Lou Van~den Dries, \emph{Tame topology and o-minimal structures}, vol. 248,
  Cambridge university press, 1998.

\bibitem{van1999one} Lou Van Den~Dries, Deirdre Haskell, and Dugald Macpherson,
  \emph{One-dimensional p-adic subanalytic sets}, Journal of the London Mathematical Society
  \textbf{59} (1999), no.~01, 1--20.

\bibitem{vapnik1971uniform} Vladimir~N Vapnik and A~Ya Chervonenkis, \emph{On the uniform
    convergence of relative frequencies of events to their probabilities}, Theory of Probability \&
  Its Applications \textbf{16} (1971), no.~2, 264--280.

\end{thebibliography}

\providecommand{\bysame}{\leavevmode\hbox to3em{\hrulefill}\thinspace}
\providecommand{\MR}{\relax\ifhmode\unskip\space\fi MR }
\providecommand{\MRhref}[2]{%
  \href{http://www.ams.org/mathscinet-getitem?mr=#1}{#2} } \providecommand{\href}[2]{#2}

\end{document}